\newtheorem{thm}{Theorem}[section]
\newtheorem{cor}[thm]{Corollary}
\newtheorem{lem}[thm]{Lemma}
\theoremstyle{definition}
\theoremstyle{remark}
\newtheorem{rem}[thm]{Remark}
\theoremstyle{conclusion}
\theoremstyle{question}
\numberwithin{equation}{section}
\newcommand\tbbint{{-\mkern -16mu\int}}
\newcommand\dbbint{{-\mkern -19mu\int}}
\newcommand\bbint{
{\mathchoice{\dbbint}{\tbbint}{\tbbint}{\tbbint}}
}
\newcommand{\lr}{\left(}
\newcommand{\rr}{\right)}
\newcommand{\R}{\mathbb{R}}
\newcommand{\C}{\mathbb{C}}
\newcommand{\be}{\begin{equation}}
\newcommand{\ee}{\end{equation}}
\newcommand{\la}{\lambda}
\newcommand{\var}{\varepsilon}
\begin{document}
\title[Classification results of $3$-D $\&$ $4$-D conformally invariant systems]
{Classification of solutions to $3$-D and $4$-D mixed order conformally invariant systems with critical and exponential growth}

\author{Wei Dai$^*$, Lixiu Duan$^\dag$, Rong Zhang$^\ddag$}

\address[Wei Dai]{$^*$ School of Mathematical Sciences, Beihang University (BUAA), Beijing 100191, P. R. China, and Key Laboratory of Mathematics, Informatics and Behavioral Semantics, Ministry of Education, Beijing 100191, P. R. China}
\email{weidai@buaa.edu.cn}

\address[Lixiu Duan]{$^\dag$ School of Mathematical Sciences, Beihang University (BUAA), Beijing 100191, P. R. China}
\email{lixiuduan@buaa.edu.cn}

\address[Rong Zhang]{$^\ddag$ HLM, Academy of Mathematics and Systems Science, Chinese Academy of Sciences, Beijing 100190, P. R. China}
\email{zhangrong@amss.ac.cn}

\thanks{\noindent Wei Dai and Lixiu Duan are supported by the NNSF of China (No. 12222102 and No. 11971049), the National Key R$\&$D Program of China (2022ZD0116401) and the Fundamental Research Funds for the Central Universities. Rong Zhang is supported by the National Funded Postdoctoral Program of China (No. GZC20232913).}

\begin{abstract}
In this paper, without any assumption on $v$ and under the extremely mild assumption $u(x)= O(|x|^{K})$ as $|x|\rightarrow+\infty$ for some $K\gg1$ arbitrarily large, we classify solutions of the following conformally invariant system with mixed order and exponentially increasing nonlinearity in $\mathbb{R}^{3}$:
$$
\begin{cases}
\ (-\Delta)^{\frac{1}{2}} u=v^{4} ,&x\in \mathbb{R}^{3},\\
\ -\Delta v=e^{pw} ,&x\in \mathbb{R}^{3},\\
\ (-\Delta)^{\frac{3}{2}} w=u^{3} ,&x\in \mathbb{R}^{3},
\end{cases}
$$
where $p>0$, $u,v\geq0$, $w(x)=o(|x|^{2})$ at $\infty$ and $u$ satisfies the finite total curvature condition $\int_{\mathbb{R}^{3}}u^{3}(x)\mathrm{d}x<+\infty$. Moreover, under the extremely mild assumption that \emph{either} $u(x)$ or $v(x)=O(|x|^{K})$ as $|x|\rightarrow+\infty$ for some $K\gg1$ arbitrarily large \emph{or} $\int_{\mathbb{R}^{4}}e^{\Lambda pw(y)}\mathrm{d}y<+\infty$ for some $\Lambda>0$, we also prove classification of solutions to the conformally invariant system with mixed order and exponentially increasing nonlinearity in $\mathbb{R}^{4}$:
\begin{align*}
\begin{cases}
\ (-\Delta)^{\frac{1}{2}} u=e^{pw} ,&x\in \mathbb{R}^{4},\\
\ -\Delta v=u^2 ,&x\in \mathbb{R}^{4},\\
\ (-\Delta)^{2} w=v^{4} ,&x\in \mathbb{R}^{4},
\end{cases}
\end{align*}
where $p>0$, $u,v\geq0$, $w(x)=o(|x|^{2})$ at $\infty$ and $v$ satisfies the finite total curvature condition $\int_{\mathbb{R}^{4}}v^{4}(x)\mathrm{d}x<+\infty$. The key ingredients are deriving the integral representation formulae and crucial asymptotic behaviors of solutions $(u,v,w)$ and calculating the explicit value of the total curvature. When $p=\frac{5}{2}$, these systems are closely related to single conformally invariant equations $(-\Delta)^{\frac{1}{2}}u=u^{\frac{n+1}{n-1}}$, $-\Delta v=v^{\frac{n+2}{n-2}}$ in $\mathbb{R}^{n}$ with $n=3,4$, $(-\Delta)^{\frac{3}{2}}w=2e^{3w}$ in $\mathbb{R}^{3}$ and $(-\Delta)^{2}w=6e^{4w}$ in $\mathbb{R}^{4}$, which have been quite extensively studied (cf. \cite{CY,CLL,CLZ,Lin,WX,Z} etc).
\end{abstract}
\maketitle {\small {\bf Keywords:} Classification of solutions; Conformally invariant; Systems with mixed order; Exponentially increasing nonlinearity; Method of moving spheres; Fractional Laplacians. \\

{\bf 2020 MSC} Primary: 35M30; Secondary: 35A02, 53C18, 35R11.}

\section{Introduction}

\subsection{Conformally invariant systems with mixed order and critical and exponential growth in $\mathbb{R}^{3}$}
In this paper, we first investigate the following conformally invariant system with mixed order and critical and exponential growth in $\mathbb{R}^{3}$:
\begin{equation}\label{a1}
\begin{cases}
\ (-\Delta)^{\frac{1}{2}} u=v^{4} ,&x\in \mathbb{R}^{3},\\
\ -\Delta v=e^{pw} ,&x\in \mathbb{R}^{3},\\
\ (-\Delta)^{\frac{3}{2}} w=u^{3} ,&x\in \mathbb{R}^{3},
\end{cases}
\end{equation}
where $p\in(0,+\infty)$, $u,v\geq0$, $w(x)=o(|x|^{2})$ as $|x|\rightarrow+\infty$ and $u$ satisfies the finite total curvature condition $\int_{\mathbb{R}^{3}}u^{3}(x)\mathrm{d}x<+\infty$.

\smallskip

We assume $(u,v,w)$ is a pair of classical solution to the $3$-D system \eqref{a1} in the sense that $u\in C^{1,\epsilon}_{loc}(\mathbb{R}^{3})\cap \mathcal{L}_{1}(\mathbb{R}^{3})$ with arbitrarily small $\epsilon>0$, $v\in C^{2}(\mathbb{R}^{3})$, and $w\in C_{loc}^{3,\epsilon}(\mathbb{R}^{3})$ with arbitrarily small $\epsilon>0$ such that $w$ or $\Delta w\in\mathcal{L}_{1}(\mathbb{R}^{3})$. The square root of the Laplacian $(-\Delta)^{\frac{1}{2}}$ is a particular case of general fractional Laplacians $(-\Delta)^{\frac{\alpha}{2}}$ with $\alpha=1$. In $\mathbb{R}^{n}$ with $n\geq1$, for any $u\in C^{[\alpha],\{\alpha\}+\epsilon}_{loc}(\mathbb{R}^{n})\cap\mathcal{L}_{\alpha}(\mathbb{R}^{n})$, the nonlocal operator $(-\Delta)^{\frac{\alpha}{2}}$ ($0<\alpha<2$) is defined by (see \cite{CT,CG,CLL,CLM,DQ,S})
\begin{equation}\label{nonlocal defn}
  (-\Delta)^{\frac{\alpha}{2}}u(x)=C_{n,\alpha} \, P.V.\int_{\mathbb{R}^n}\frac{u(x)-u(y)}{|x-y|^{n+\alpha}}\mathrm{d}y:=C_{n,\alpha}\lim_{\varepsilon\rightarrow0}\int_{|y-x|\geq\varepsilon}\frac{u(x)-u(y)}{|x-y|^{n+\alpha}}\mathrm{d}y,
\end{equation}
where $[\alpha]$ denotes the integer part of $\alpha$, $\{\alpha\}:=\alpha-[\alpha]$, the constant $C_{n,\alpha}=\left(\int_{\mathbb{R}^{n}} \frac{1-\cos \left(2 \pi \zeta_{1}\right)}{|\zeta|^{n+\alpha}} d \zeta\right)^{-1}$ and the (slowly increasing) function space
\begin{equation}\label{0-1-space}
  \mathcal{L}_{\alpha}(\mathbb{R}^{n}):=\left\{u: \mathbb{R}^{n}\rightarrow\mathbb{R}\,\big|\,\int_{\mathbb{R}^{n}}\frac{|u(x)|}{1+|x|^{n+\alpha}}\mathrm{d}x<+\infty\right\}.
\end{equation}

\smallskip

The fractional Laplacians $(-\Delta)^{\frac{\alpha}{2}}$ can also be defined equivalently (see \cite{CLM}) by Caffarelli and Silvestre's extension method (see \cite{CS}) for $u\in C^{[\alpha],\{\alpha\}+\epsilon}_{loc}(\mathbb{R}^{n})\cap\mathcal{L}_{\alpha}(\mathbb{R}^{n})$. For instance, the square root of the Laplacian $(-\Delta)^{\frac{1}{2}}$ can be defined equivalently for any $u\in C^{1,\epsilon}_{loc}(\mathbb{R}^{n})\cap\mathcal{L}_{1}(\mathbb{R}^{n})$ by
\begin{equation}\label{extension}
  (-\Delta)^{\frac{1}{2}}u(x):=-C_{n}\lim_{y\rightarrow0+}\frac{\partial U(x,y)}{\partial y}
  =-C_{n}\lim_{y\rightarrow0+}\int_{\mathbb{R}^{n}}\frac{|x-\xi|^{2}-ny^{2}}{\big(|x-\xi|^{2}+y^{2}\big)^{\frac{n+3}{2}}}u(\xi)d\xi,
\end{equation}
where $U(x,y)$ is the harmonic extension of $u(x)$ in $\mathbb{R}^{n+1}_{+}=\{(x,y)| \, x\in\mathbb{R}^{n}, \, y\geq0\}$. The definition \eqref{nonlocal defn} of the fractional Laplacian $(-\Delta)^{\frac{\alpha}{2}}$ can also be extended further to distributions in the space $\mathcal{L}_{\alpha}(\mathbb{R}^{n})$ by
\begin{equation}\label{distribution}
  \left\langle(-\Delta)^{\frac{\alpha}{2}}u,\phi\right\rangle=\int_{\mathbb{R}^n}u(x)(-\Delta)^{\frac{\alpha}{2}}\phi(x) \mathrm{d}x, \qquad \forall\phi\in C^{\infty}_0(\mathbb{R}^n).
\end{equation}

\smallskip

Throughout this paper, we define $(-\Delta)^{\frac{1}{2}}u$ and $(-\Delta)^{\frac{3}{2}}w:=(-\Delta)(-\Delta)^{\frac{1}{2}}w=(-\Delta)^{\frac{1}{2}}(-\Delta)w$ by definition \eqref{nonlocal defn} and its equivalent definition \eqref{extension} for $u\in C^{1,\epsilon}_{loc}(\mathbb{R}^{3})\cap\mathcal{L}_{1}(\mathbb{R}^{3})$ and $w\in C_{loc}^{3,\epsilon}(\mathbb{R}^{3})$ such that $w$ or $\Delta w\in\mathcal{L}_{1}(\mathbb{R}^{3})$, where $\epsilon>0$ is arbitrarily small. Due to the nonlocal virtue of $(-\Delta)^{\frac{1}{2}}$, we need the assumption $u\in C^{1,\epsilon}_{loc}(\mathbb{R}^{3})$ and $w\in C_{loc}^{3,\epsilon}(\mathbb{R}^{3})$ with arbitrarily small $\epsilon>0$ (merely $u\in C^{1}$ and $w\in C^{3}$ are not enough) to guarantee that $(-\Delta)^{\frac{1}{2}}u\in C(\mathbb{R}^{3})$ and $(-\Delta)^{\frac{3}{2}}w\in C(\mathbb{R}^{3})$ (see \cite{CLM,S}), and hence $(u,v,w)$ is a pair of classical solution to the $3$-D system \eqref{a1} in the sense that $(-\Delta)^{\frac{1}{2}}u$ and $(-\Delta)^{\frac{3}{2}}w$ is pointwise well defined and continuous in the whole space $\mathbb{R}^{3}$.

\smallskip

The fractional Laplacian $(-\Delta)^{\frac{\alpha}{2}}$ is a nonlocal integral-differential operator. It can be used to model diverse physical phenomena, such as anomalous diffusion and quasi-geostrophic flows, turbulence and water waves, molecular dynamics, and relativistic quantum mechanics of stars (see \cite{CV,Co} and the references therein). It also has various applications in conformal geometry, probability and finance (see \cite{Be,CT,CG} and the references therein). In particular, $(-\Delta)^{\frac{\alpha}{2}}$ with $0<\alpha<2$ can also be understood as the infinitesimal generator of a stable L\'{e}vy process (see \cite{Be}).

\medskip

Consider fractional order or higher order geometrically interesting conformally invariant equation of the form
\begin{equation}\label{GPDE}
  (-\Delta)^{\frac{\alpha}{2}}u=u^{\frac{n+\alpha}{n-\alpha}} \qquad \text{in} \,\, \mathbb{R}^{n},
\end{equation}
where $n\geq1$ and $\alpha\in(0,n)\bigcup(n,+\infty)$. We say \eqref{GPDE} has subcritical or super-critical order if $\alpha<n$ or $\alpha>n$ respectively. In the special case $\alpha=2<n$, equation \eqref{GPDE} is the the well-known Yamabe problem. In higher order case that $\alpha\geq2$ is an even integer, equation \eqref{GPDE} arises from the conformal metric problems, prescribing $Q$-curvature problems, conformally covariant Paneitz operators and GJMS operators and so on ... (see e.g. \cite{CY,CL,CL1,CL2,FKT,GJMS,Juhl,Lin,Li,N,P,WX,Xu,Z} and the references therein). In the fractional order or fractional higher order case that $\alpha\in(0,n)\setminus 2\mathbb{N}$, conformally invariant equation \eqref{GPDE} is closely related to the fractional $Q$-curvature problems and the study of fractional conformally covariant Paneitz and GJMS operators and so on ... (cf. e.g. \cite{CC,CG,JLX1} and the references therein). In particular, the special cases $\alpha=1,2,3,4$ concerned in our paper are related to mean curvature, scalar curvature, fractional $Q$-curvature and $Q$-curvature problems respectively.

\smallskip

The quantitative and qualitative properties of solutions to conformally invariant equations \eqref{GPDE} have been extensively studied. In the subcritical order case $\alpha\in(0,n)$, for classification results of positive classical solutions to equation \eqref{GPDE}, please see Gidas, Ni and Nirenberg \cite{GNN1} and Caffarelli, Gidas and Spruck \cite{CGS} for $\alpha=2$, Lin \cite{Lin} for $\alpha=4$, Wei and Xu \cite{WX} for even integer $\alpha\in(0,n)$, Chen, Li and Li \cite{CLL}, Chen, Li and Zhang \cite{CLZ} and Jin, Li and Xiong \cite{JLX} for $0<\alpha<2$, Dai and Qin \cite{DQ} for $\alpha=3$, and Cao, Dai and Qin \cite{CDQ0} for any real number $\alpha\in(0,n)$. In \cite{CLO}, by developing the method of moving planes in integral forms, Chen, Li and Ou classified all the positive $L^{\frac{2n}{n-\alpha}}_{loc}$ solutions to the equivalent integral equation of the PDE \eqref{GPDE} for general $\alpha\in(0,n)$, as a consequence, they obtained the classification results for positive weak solutions to PDE \eqref{GPDE}. In the super-critical order cases, for classification results of positive classical solutions to equation \eqref{GPDE} and related IE with negative exponents, please refer to \cite{DF,Li,N,Xu} and the references therein.

\medskip

The case $n=\alpha$ is called the limiting case (or the critical order case). When $n=\alpha=2$, by using the method of moving planes, Chen and Li \cite{CL} classified all the $C^{2}$ smooth solutions with finite total curvature of the Liouville equation
\begin{equation}\label{0-1}\\\begin{cases}
-\Delta u(x)=e^{2u(x)},  \qquad  x\in\mathbb{R}^{2}, \\ \\
\int_{\mathbb{R}^{2}}e^{2u(x)}\mathrm{d}x<+\infty.
\end{cases}\end{equation}
They proved that there exists some point $x_{0}\in\mathbb{R}^{2}$ and some $\lambda>0$ such that
$$u(x)=\ln\left[\frac{2\lambda}{1+\lambda^{2}|x-x_{0}|^{2}}\right].$$
Equations of type \eqref{0-1} arise from a variety of situations, such as from prescribing Gaussian curvature in geometry and from combustion theory in physics. For conformally invariant systems of nonlinear PDEs of Liouville type, please see \cite{CK}.

\smallskip

Let $g_{\mathbf{S}^{2}}$ be the standard metric on the unit $2$-sphere $\mathbf{S}^{2}$. If we consider the conformal metric $\hat{g}:=e^{2w}g_{\mathbf{S}^{2}}$, then the Gaussian curvature $K_{\hat{g}}$ satisfies the following PDE:
\begin{equation}\label{g1}
  \Delta_{g_{\mathbf{S}^{2}}}w+K_{\hat{g}}e^{2w}=1 \qquad \text{on} \,\, \mathbf{S}^{2},
\end{equation}
where $\Delta_{g_{\mathbf{S}^{2}}}$ denotes the Laplace-Beltrami operator with respect to the standard metric $g_{\mathbf{S}^{2}}$ on the sphere $\mathbf{S}^{2}$. In particular, if we take $K_{\hat{g}}\equiv1$ in \eqref{a1}, then $\hat{g}:=e^{2w}g_{\mathbf{S}^{2}}$ is the pull back of the standard metric $g_{\mathbf{S}^{2}}$ through some conformal transformation $\phi$ (i.e., $\hat{g}$ is isometric to $g_{\mathbf{S}^{2}}$). Through the stereographic projection $\pi$ from $\mathbf{S}^{2}$ to $\mathbb{R}^{2}$, one can see that equation \eqref{0-1} on $\mathbb{R}^{2}$ is equivalent to the equation \eqref{g1} on $\mathbf{S}^{2}$ with $K_{\hat{g}}\equiv1$.

\smallskip

In general, on $(\mathbf{S}^{n},g_{\mathbf{S}^{n}})$, if we change the standard metric $g_{\mathbf{S}^{n}}$ to its conformal metric $\hat{g}:=e^{nw}g_{\mathbf{S}^{n}}$ for some smooth function $w$ on the $n$-sphere $\mathbf{S}^{n}$, since the GJMS operator $P_{n,g}$ is conformlly covariant (c.f. \cite{GJMS,P}), it turns out that there exists some scalar curvature quantity $Q_{n,g}$ of order $n$ such that
\begin{equation}\label{g3}
  -P_{n,g_{\mathbf{S}^{n}}}(w)+Q_{n,\hat{g}}e^{nw}=Q_{n,g_{\mathbf{S}^{n}}} \qquad \text{on} \,\, \mathbf{S}^{n},
\end{equation}
where the explicit formula for $P_{n,g_{\mathbf{S}^{n}}}$ on $\mathbf{S}^{n}$ with general integer $n\in\mathbb{N}^{+}$ is given by (c.f. \cite{Juhl}):
\begin{equation}\label{GJMS-2}
P_{n,g_{\mathbf{S}^{n}}}\left(\cdot\right)=\prod_{k=1}^{\frac{n}{2}}\left[-\Delta_{g_{\mathbf{S}^{n}}}+\left(\frac{n}{2}-k\right)\left(\frac{n}{2}+k-1\right)\right]\left(\cdot\right), \qquad \text{if} \,\, n \,\, \text{is even},
\end{equation}
\begin{equation}\label{GJMS-1}
P_{n,g_{\mathbf{S}^{n}}}\left(\cdot\right)=\left[-\Delta_{g_{\mathbf{S}^{n}}}+\frac{(n-1)^{2}}{4}\right]^{\frac{1}{2}}
\prod_{k=1}^{\frac{n-1}{2}}\left[-\Delta_{g_{\mathbf{S}^{n}}}+\frac{(n-1)^{2}}{4}-k^{2}\right]\left(\cdot\right), \qquad \text{if} \,\, n \,\, \text{is odd}.
\end{equation}
When the metric $\hat{g}$ is isometric to the standard metric $g_{\mathbf{S}^{n}}$, then $Q_{n,\hat{g}}=Q_{n,g_{\mathbf{S}^{n}}}=(n-1)!$, and hence \eqref{g3} becomes
\begin{equation}\label{g4}
  -P_{n,g_{\mathbf{S}^{n}}}(w)+(n-1)!e^{nw}=(n-1)! \qquad \text{on} \,\, \mathbf{S}^{n}.
\end{equation}
We can reformulate the equation \eqref{g4} on $\mathbb{R}^{n}$ by applying the stereographic projection. Let us denote by $\pi: \, \mathbf{S}^{n}\rightarrow \mathbb{R}^{n}$ the stereographic projection which maps the south pole on $\mathbf{S}^{n}$ to $\infty$. That is, for any $\zeta=(\zeta_{1},\cdots,\zeta_{n+1})\in\mathbf{S}^{n}\subset\mathbb{R}^{n+1}$ and $x=\pi(\zeta)=(x_{1},\cdots,x_{n})\in\mathbb{R}^{n}$, then it holds $\zeta_{k}=\frac{2x_{k}}{1+|x|^{2}}$ for $1\leq k\leq n$ and $\zeta_{n+1}=\frac{1-|x|^{2}}{1+|x|^{2}}$. Suppose $w$ is a smooth function on $\mathbf{S}^{n}$, define the function $u(x):=\phi(x)+w(\zeta)$ for any $x\in\mathbb{R}^{n}$, where $\zeta:=\pi^{-1}(x)$ and $\phi(x):=\ln\left[\frac{2}{1+|x|^{2}}\right]=\ln\left|J_{\pi^{-1}}\right|$. Since the GJMS operator $P_{n,g_{\mathbf{S}^{n}}}$ is the pull back under $\pi$ of the operator $(-\Delta)^{\frac{n}{2}}$ on $\mathbb{R}^{n}$ (see Theorem 3.3 in \cite{Branson1}), $w$ satisfies the equation \eqref{g4} on $\mathbf{S}^{n}$ if and only if the function $u$ satisfies
\begin{equation}\label{g5}
  (-\Delta)^{\frac{n}{2}}u=(n-1)!e^{nu} \qquad \text{in} \,\, \mathbb{R}^{n}.
\end{equation}

\smallskip

In \cite{CY}, for general integer $n$, Chang and Yang classified the $C^{n}$ smooth solutions to the critical order equations \eqref{g5} under decay conditions near infinity
\begin{equation}\label{g0}
  u(x)=\ln\left[\frac{2}{1+|x|^2}\right]+w\left(\zeta(x)\right)
\end{equation}
for some smooth function $w$ defined on $\mathbf{S}^n$. When $n=\alpha=4$, Lin \cite{Lin} proved the classification results for all the $C^{4}$ smooth solutions of
\begin{equation}\label{0-2}\\\begin{cases}
\Delta^{2}u(x)=6e^{4u(x)}, \,\,\,\,\,\,\,\, x\in\mathbb{R}^{4}, \\ \\
\int_{\mathbb{R}^{4}}e^{4u(x)}\mathrm{d}x<+\infty, \,\,\,\,\,\, u(x)=o\left(|x|^{2}\right) \,\,\,\, \text{as} \,\,\,\, |x|\rightarrow+\infty.
\end{cases}\end{equation}
When $n=\alpha$ is an even integer, Wei and Xu \cite{WX} classified the $C^{n}$ smooth solutions of \eqref{g5} with finite total curvature $\int_{\mathbb{R}^{n}}e^{nu(x)}\mathrm{d}x<+\infty$ under the assumption $u(x)=o\left(|x|^{2}\right)$ as $|x|\rightarrow+\infty$. Zhu \cite{Z} classified all the classical solutions with finite total curvature of the problem
\begin{equation}\label{0-4}\\\begin{cases}
(-\Delta)^{\frac{3}{2}}u(x)=2e^{3u(x)}, \,\,\,\,\,\,\,\, x\in\mathbb{R}^{3}, \\ \\
\int_{\mathbb{R}^{3}}e^{3u(x)}\mathrm{d}x<+\infty, \,\,\,\,\,\, u(x)=o(|x|^{2}) \,\,\,\, \text{as} \,\,\,\, |x|\rightarrow+\infty.
\end{cases}\end{equation}
The equation \eqref{0-4} can also be regarded as the following system with mixed order:
\begin{equation}\label{0-4s}\\\begin{cases}
(-\Delta)^{\frac{1}{2}}u(x)=2e^{3v(x)}, \,\,\,\,\,\,\,\, x\in\mathbb{R}^{3}, \\
-\Delta v(x)=u(x), \,\,\,\,\,\,\,\, x\in\mathbb{R}^{3}, \\
\int_{\mathbb{R}^{3}}e^{3v(x)}\mathrm{d}x<+\infty, \,\,\,\,\,\, v(x)=o\left(|x|^{2}\right) \,\,\,\, \text{as} \,\,\,\, |x|\rightarrow+\infty.
\end{cases}\end{equation}
Recently, Dai and Qin \cite{DQ} classified $(u,v)\in \left(C^{1,\epsilon}_{loc}(\mathbb{R}^{2})\cap \mathcal{L}_{1}(\mathbb{R}^{2})\right)\times C^{2}(\mathbb{R}^{4})$ arbitrarily small $\epsilon>0$ to the following planar mixed order conformally invariant system:
\begin{equation}\label{PDE-2d}\\\begin{cases}
(-\Delta)^{\frac{1}{2}}u(x)=e^{pv(x)}, \,\,\quad  u(x)\geq0, \qquad x\in\mathbb{R}^{2}, \\
-\Delta v(x)=u^{4}(x), \qquad x\in\mathbb{R}^{2}, \\
\int_{\mathbb{R}^{2}}u^{4}(x)\mathrm{d}x<+\infty, \qquad u(x)=O\left(|x|^{K}\right) \quad \text{as} \,\,\,\, |x|\rightarrow+\infty,
\end{cases}\end{equation}
where $p\in(0,+\infty)$ and $K\gg1$ is arbitrarily large. For more classification results on mixed order conformally invariant systems, please c.f. \cite{Yu,GP,GP1,HN}. For more literatures on the quantitative and qualitative properties of solutions to fractional order or higher order conformally invariant PDE and IE problems, please refer to \cite{BKN,BF,CT,C,CL,CL2,DQ0,Fall,FK,FKT,FLS,JLX1,LZ1,LZ} and the references therein.

\medskip

In this paper, by using the method of moving spheres, we classify all the classical solutions $(u,v,w)$ to the conformally invariant $3$-D system \eqref{a1} with mixed order and critical and exponential growth. One can observe that, if we assume the relationship $u=v^{2}=e^{w}$ in $\mathbb{R}^{3}$ between $u$, $v$ and $w$ in the following three $3$-D conformally invariant equations:
\begin{equation}\label{g6}
  (-\Delta)^{\frac{1}{2}}u=u^{2}, \quad -\Delta v=v^{5} \quad \text{and} \quad (-\Delta)^{\frac{3}{2}}w=e^{3w} \quad \text{in} \,\, \mathbb{R}^{3}
\end{equation}
with the finite total curvature $\int_{\mathbb{R}^{3}}e^{3w(x)}\mathrm{d}x<+\infty$, the resulting system is
\begin{equation}\label{PDE+}\\\begin{cases}
(-\Delta)^{\frac{1}{2}}u(x)=v^{4}(x), \qquad x\in\mathbb{R}^{3}, \\
-\Delta v(x)=e^{\frac{5}{2}w(x)}, \qquad x\in\mathbb{R}^{3}, \\
(-\Delta)^{\frac{3}{2}}w=u^{3}(x), \qquad x\in\mathbb{R}^{3}
\end{cases}\end{equation}
with the finite total curvature $\int_{\mathbb{R}^{3}}u^{3}(x)\mathrm{d}x<+\infty$, i.e., system \eqref{a1} with $p=\frac{5}{2}$. For more literatures on the classification of solutions and Liouville type theorems for various PDE and IE problems via the methods of moving planes or spheres and the method of scaling spheres, please refer to \cite{CGS,CDQ0,CY,CL,CL1,CL0,CL2,CLL,CLO,CLZ,DHL,DLQ,DQ,DQ0,DGZ,DZ,FKT,GNN1,JLX,JLX1,Lin,Li,LZ1,LZ,NN,Pa,Serrin,WX,Xu,Yu,Z} and the references therein.

\smallskip

Our classification result for $3$-D system \eqref{a1} is the following theorem.
\begin{thm} \label{thm0}
Assume $p>0$ and $(u,v,w)$ is a pair of classical solutions to the $3$-D system \eqref{a1} such that $u,v\geq0$, $\int_{\mathbb{R}^{3}}u^{3}(x)\mathrm{d}x<+\infty$ and $w(x)=o(|x|^{2})$ as $|x|\rightarrow+\infty$. Suppose that there exists some $K\gg1$ arbitrarily large such that $u(x)= O(|x|^{K})$ as $|x|\rightarrow+\infty$, then $(u,v,w)$ must take the unique form:
\begin{equation*} u(x)=\frac{2(\frac{5}{p})^{\frac{1}{3}}\mu}{1+\mu^{2}|x-x_{0}|^{2}},\ \,\,\, v(x)=\left(\frac{2(\frac{5}{p})^{\frac{1}{6}}\mu}{1+\mu^{2}|x-x_{0}|^{2}}\right)
^{\frac{1}{2}},\ \,\,\, w(x)=\frac{5}{2p}\ln\left(\frac{(\frac{405}{p})^{\frac{1}{15}}\mu}
{1+\mu^{2}|x-x_{0}|^{2}}\right)
\end{equation*}
for some $\mu>0$ and some $x_{0}\in\mathbb{R}^{3}$, and
$$\int_{\mathbb{R}^{3}}u^{3}(x)\mathrm{d}x=\frac{10\pi^2}{p},\ \quad
\int_{\mathbb{R}^{3}}v^{4}(x)\mathrm{d}x=\lr\frac{5}{p}\rr^{\frac{1}{3}}\frac{4\pi^2}{\mu},\ \quad
\int_{\mathbb{R}^{3}}e^{pw(x)}\mathrm{d}x=\lr\frac{5}{p}\rr^{\frac{1}{6}}\frac{4\pi}{\mu}.$$
\end{thm}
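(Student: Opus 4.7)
The plan is to reduce the PDE system to an equivalent system of integral equations via suitable representation formulae, to extract the precise asymptotic behavior of $u$, $v$ and $w$ at infinity, to pin down the total curvature $\int_{\mathbb{R}^{3}}u^{3}$, and finally to apply the method of moving spheres to the integral system in order to identify the unique explicit form. The overall strategy parallels the planar mixed-order case treated in \cite{DQ}, but the critical-order equation $(-\Delta)^{3/2}w=u^{3}$ and the coupling of three unknowns introduce genuinely new difficulties.

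First I would derive the integral representation formulae. Using the Riesz-potential Green functions of $(-\Delta)^{\frac{1}{2}}$ and $-\Delta$ on $\mathbb{R}^{3}$ together with the critical-order (logarithmic) fundamental solution of $(-\Delta)^{\frac{3}{2}}$, the aim is to show that, up to possible polynomial corrections,
\begin{equation*}
u(x)=c_{1}\!\int_{\mathbb{R}^{3}}\!\frac{v^{4}(y)}{|x-y|^{2}}\,\mathrm{d}y,\quad v(x)=c_{2}\!\int_{\mathbb{R}^{3}}\!\frac{e^{pw(y)}}{|x-y|}\,\mathrm{d}y,\quad w(x)=c_{3}\!\int_{\mathbb{R}^{3}}\!\log\!\frac{|y|}{|x-y|}\,u^{3}(y)\,\mathrm{d}y+C.
\end{equation*}
The polynomial corrections allowed by the Liouville-type structure of poly-(fractional-)harmonic functions are then killed iteratively: the mild hypothesis $u(x)=O(|x|^{K})$, $w(x)=o(|x|^{2})$, the finite total curvature condition $\int u^{3}<+\infty$, and the sign conditions $u,v\geq 0$ are propagated through the three equations via a superharmonic/Liouville-type bootstrap that sharpens the growth of each unknown until all nonconstant polynomial terms must vanish.

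Next I would establish sharp asymptotics at infinity:
\begin{equation*}
u(x)=\frac{C_{u}+o(1)}{|x|^{2}},\quad v(x)=\frac{C_{v}+o(1)}{|x|},\quad w(x)=-\alpha\ln|x|+O(1),
\end{equation*}
with $\alpha=\frac{1}{2\pi^{2}}\int_{\mathbb{R}^{3}}u^{3}$, and $C_{u},C_{v}$ proportional to $\int v^{4}$ and $\int e^{pw}$ respectively. Integrability of $e^{pw}$ immediately forces $p\alpha>3$. The sharper identity $\alpha=\frac{5}{p}$, equivalently $\int u^{3}=\frac{10\pi^{2}}{p}$, reflects the conformal weights $2$, $1$, $5$ carried by $u$, $v$ and $e^{pw}$ under the Kelvin transform dictated by the conformal invariance of \eqref{a1}; it would be extracted from a consistency check between the three integral representations and the leading decay rates of $u$, $v$, $e^{pw}$, in the spirit of the total-curvature identities of Chang--Yang \cite{CY}, Lin \cite{Lin}, Wei--Xu \cite{WX} and Zhu \cite{Z} for the scalar critical-order equations.

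Finally, with both integral representations and sharp asymptotics in hand, I would apply the method of moving spheres (in the integral form of Chen--Li--Ou \cite{CLO}) simultaneously to the three integral equations. Forming Kelvin transforms of $u$, $v$ and $e^{pw}$ at the respective conformal weights $2$, $1$, $5$, a standard comparison between each function and its Kelvin image centered at an arbitrary point $x_{0}$ with arbitrary radius shows that the moving-sphere procedure can be started at every point and continued without obstruction. The resulting rigidity forces $u$, $v$, $e^{pw}$ to coincide with their Kelvin transforms at some (hence every) center, which is known to imply that each must be a conformal bubble of the form $\mu/(1+\mu^{2}|x-x_{0}|^{2})$ raised to the appropriate power; matching constants across the three equations then yields the stated explicit formulas and values of the three integrals. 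The principal obstacle is the sharp asymptotic analysis: under only the extremely weak assumption $u(x)=O(|x|^{K})$ with $K$ arbitrarily large, one has to upgrade crude polynomial growth into precise $|x|^{-2}$ decay through three coupled equations of very different natures, and it is exactly this upgrade that ultimately makes the total-curvature identification $\alpha=5/p$ possible and clears the way for the moving-spheres step.
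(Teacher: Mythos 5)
Your overall plan (integral representations $\to$ asymptotics $\to$ moving spheres) agrees with the paper's architecture, but there is a genuine gap in the middle that breaks the logic: you assert that the identity $\alpha=\frac{5}{p}$ (i.e.\ $\int_{\mathbb{R}^{3}}u^{3}=\frac{10\pi^{2}}{p}$) ``would be extracted from a consistency check between the three integral representations and the leading decay rates, in the spirit of the total-curvature identities of Chang--Yang, Lin, Wei--Xu and Zhu,'' and you then run moving spheres only afterwards. This is not substantiated and is not how the paper proceeds, and I do not see how to make it work. The representation formulae combined with $|x|^{-1}e^{pw}\in L^{1}(\mathbb{R}^{3})$ only give the one-sided a priori bound $\alpha\geq\frac{2}{p}$. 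You also claim that ``integrability of $e^{pw}$ immediately forces $p\alpha>3$,'' but $\int_{\mathbb{R}^{3}}e^{pw}<+\infty$ is not a hypothesis; only $\int u^{3}<+\infty$ is, and from the equation for $v$ one obtains only $\int |x|^{-1}e^{pw}<+\infty$, hence only $\alpha\geq\frac{2}{p}$ a priori. The sharp decay $u(x)\to\beta|x|^{-2}$, $v(x)\to\gamma|x|^{-1}$ that you take for granted is derived in the paper conditionally on $\alpha>\frac{3}{p}$, not unconditionally. There is no Pohozaev-type identity in the paper (nor a clean substitute for one in the coupled mixed-order setting) that pins down $\alpha$ before the rigidity step.

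In the paper the value $\alpha=\frac{5}{p}$ is obtained as a byproduct of the moving-sphere argument itself, not before it. Because the log kernel in the representation for $w$ carries total mass $\alpha$, the Kelvin comparison for $w$ contains the extra term $\bigl(\frac{5}{p}-\alpha\bigr)\ln\frac{\lambda}{|x-x_{0}|}$, with a definite sign when $\alpha\neq\frac{5}{p}$. The paper therefore splits into $\alpha\geq\frac{5}{p}$ (start the spheres from $\lambda=+\infty$) and $\frac{2}{p}\leq\alpha\leq\frac{5}{p}$ (start from $\lambda=0$), then shows via a narrow-region/compactness argument that the critical radius $\lambda_{x_{0}}$ is $0$ in the first case (if $\alpha>\frac{5}{p}$) and $+\infty$ in the second (if $\alpha<\frac{5}{p}$); in either situation the calculus Lemma 11.2 of Li--Zhang forces $u\equiv\mathrm{const}$, hence $u\equiv 0$ by $u^{3}\in L^{1}$, which contradicts $(-\Delta)^{1/2}u=v^{4}>0$ (or $-\Delta v=e^{pw}>0$). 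Only then does one conclude $\alpha=\frac{5}{p}$, and with the extra log term killed the Kelvin transforms coincide exactly, so Lemma 11.1 of Li--Zhang yields the bubble form. Your proposal omits this dichotomy entirely, treating the moving-sphere procedure as if it ``can be started at every point and continued without obstruction,'' which is precisely what one cannot assert before $\alpha$ is known. You need to either (a) carry out the two-sided moving-spheres dichotomy with the extra $(\frac{5}{p}-\alpha)\ln$ term and argue by contradiction using the Li--Zhang calculus lemma, as the paper does, or (b) supply a genuine Pohozaev-type identity for the coupled mixed-order system that determines $\alpha$ independently --- the latter does not appear to be available here.
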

\begin{rem}\label{rem0}
One should note that, in Theorem \ref{thm0}, we do not need any assumption on $v$. The finite total curvature condition $\int_{\mathbb{R}^{3}}u^{3}(x)\mathrm{d}x<+\infty$ and $w(x)=o(|x|^{2})$ at $\infty$ is necessary for the classification result of higher order conformally invariant equations or systems (see e.g. \cite{CY,Lin,WX,Yu,Z}). The assumption ``$u(x)=O\left(|x|^{K}\right)$ at $\infty$ for some $K\gg1$ arbitrarily large" is an extremely mild condition. In fact, the necessary condition for us to define $(-\Delta)^{\frac{1}{2}}u$ is $u\in\mathcal{L}_{1}$ (i.e., $\frac{u}{1+|x|^{4}}\in L^{1}(\mathbb{R}^{3})$), which already indicates that $u$ grows slowly and must has strictly less than linear growth at $\infty$ in the sense of integral. In addition to Theorem \ref{thm0}, by direct calculations, one can also find that
\begin{equation}\label{g13}
  \int_{\mathbb{R}^{3}}v^{6}(x)\mathrm{d}x=2\left(\frac{5}{p}\right)^{\frac{1}{2}}\pi^2 \qquad \text{and} \qquad \int_{\mathbb{R}^{3}}e^{\frac{6}{5}pw(x)}\mathrm{d}x=\left(\frac{405}{p}\right)^{\frac{1}{5}}\frac{\pi^{2}}{4}.
\end{equation}
\end{rem}

\smallskip

We would like to mention some key ideas and main ingredients in our proof of Theorem \ref{thm0}.

\smallskip

One should note that the $3$-D system \eqref{a1} has higher degree of nonlinearity and complexity than a single equation or systems involving only two components $(u,v)$. We need to overcome the mutual restrictions between $(u,v,w)$ and derive the precise asymptotic behaviors at $\infty$ and the integral representation formulae of $u$, $v$ and $w$.

\medskip

First, from the finite total curvature condition $\int_{\mathbb{R}^{3}}u^{3}(x)\mathrm{d}x<+\infty$, we can derive the integral representation formula for $u$ (see Lemmas \ref{le1} and \ref{le2}), that is,
\begin{equation}\label{e1-3d}
  u(x)=\frac{1}{2\pi^{2}}\int_{\mathbb{R}^{3}}\frac{v^{4}(y)}{|x-y|^{2}}\mathrm{d}y, \qquad v(x)=\frac{1}{4\pi}\int_{\mathbb{R}^{3}}\frac{e^{pw(y)}}{|x-y|}\mathrm{d}y
\end{equation}
and hence $|x|^{-2}v^{4}\in L^{1}(\mathbb{R}^{3})$ and $|x|^{-1}e^{pw}\in L^{1}(\mathbb{R}^{3})$. Then, from the finite total curvature condition $\int_{\mathbb{R}^{3}}u^{3}(x)\mathrm{d}x<+\infty$ and $w(x)=o(|x|^{2})$ at $\infty$, we can derive the integral representation formula for $\Delta w$ (see Lemmas \ref{lem1}, \ref{lem2} and \ref{cor13}), that is,
\begin{equation}\label{e2+}
  \Delta w(x)=-\frac{1}{2\pi^{2}}\int_{\mathbb{R}^{3}}\frac{u^{3}(y)}{|x-y|^{2}}\mathrm{d}y.
\end{equation}
From the finite total curvature condition $u^{3}\in L^{1}(\mathbb{R}^{3})$, $w(x)=o(|x|^2)$ at $\infty$ and the assumption $u(x)=O\left(|x|^{K}\right)$ at $\infty$ for some $K\gg1$ arbitrarily large, by using the $\exp^{L}+L\ln L$ inequality from \cite{DQ2} to estimate integrals with logarithmic singularity, we get the asymptotic property $\lim\limits_{|x|\rightarrow+\infty}\frac{k(x)}{\ln|x|}=-\alpha$, where $k(x):=\frac{1}{2\pi^{2}}\int_{\mathbb{R}^{3}}\ln\left[\frac{|y|}{|x-y|}\right]u^{3}(y)\mathrm{d}y$ and $\alpha:=\frac{1}{2\pi^{2}}\int_{\mathbb{R}^{3}}u^{3}(x)\mathrm{d}x$. Based on these properties, by Liouville type results in Corollary 2.10 in \cite{DQ2} deriving from Lemma 3.3 in Lin \cite{Lin} (see Lemma \ref{lemma214}), we can deduce from $|x|^{-1}e^{pw}\in L^{1}(\mathbb{R}^{3})$ the following integral representation formula for $w$:
\begin{equation}\label{e2-3d}
  w(x)=\frac{1}{2\pi^{2}}\int_{\mathbb{R}^{3}}\ln\left[\frac{|y|}{|x-y|}\right]u^{3}(y)\mathrm{d}y+C_{0}
\end{equation}
for some constant $C_{0}\in\mathbb{R}$, and hence the crucial asymptotic behavior $\lim\limits_{|x|\rightarrow+\infty}\frac{w(x)}{\ln|x|}=-\alpha$ (see Lemma \ref{le3}). As a consequence, we derive that the total curvature $\alpha:=\frac{1}{2\pi^{2}}\int_{\mathbb{R}^{3}}u^{3}(x)\mathrm{d}x\geq\frac{2}{p}$. Moreover, if $\alpha>\frac{3}{p}$, we can get $\gamma:=\frac{1}{2\pi}\int_{\mathbb{R}^3}e^{pw(x)}\mathrm{d}x<+\infty$ and the asymptotic behavior $\lim\limits_{|x|\rightarrow+\infty}|x|v(x)=\gamma$, furthermore, $\beta:=\frac{1}{2\pi^2}\int_{\mathbb{R}^{3}}v^{4}(x)\mathrm{d}x<+\infty$ and the asymptotic behavior $\lim\limits_{|x|\rightarrow+\infty}|x|^{2}u(x)=\beta$ (see Corollary \ref{le4}).

\smallskip

Next, by making use of these properties, we can apply the method of moving spheres to the IE system for $(u,v,w)$ consisting of \eqref{e1-3d} and \eqref{e2-3d}. For any $x_{0}\in\mathbb{R}^{3}$, we first prove that, $u_{x_{0},\lambda}\leq u$, $v_{x_{0},\lambda}\leq v$ and $w_{x_{0},\lambda}\leq w$ in $B_{\lambda}(x_{0})\setminus\{x_{0}\}$ for $\lambda\in(0,+\infty)$ sufficiently large if $\alpha\geq\frac{5}{p}$, $u_{x_{0},\lambda}\geq u$, $v_{x_{0},\lambda}\geq v$ and $w_{x_{0},\lambda}\geq w$ in $B_{\lambda}(x_{0})\setminus\{x_{0}\}$ for $\lambda\in(0,+\infty)$ sufficiently small if $\frac{2}{p}<\alpha\leq\frac{5}{p}$ (see \eqref{r1} for definitions of the Kelvin transforms $u_{x_{0},\lambda}$, $v_{x_{0},\lambda}$ and $w_{x_{0},\lambda}$). Then, for any $x_{0}\in\mathbb{R}^{3}$, we show the limiting radius $\lambda_{x_{0}}=0$ if $\alpha>\frac{5}{p}$ and $\lambda_{x_{0}}=+\infty$ if $\alpha<\frac{5}{p}$ (see \eqref{c11} and \eqref{c10} for definitions of the limiting radius $\lambda_{x_{0}}$), and hence derive a contradiction from Lemma 11.2 in \cite{LZ1} (see Lemma \ref{le10}), the finite total curvature condition and the system \eqref{a1}. Finally, we must have $\alpha=\frac{5}{p}$ and hence $u_{x_{0},\lambda}\equiv u$, $v_{x_{0},\lambda}\equiv v$ and $w_{x_{0},\lambda}\equiv w$ in $\mathbb{R}^{3}\setminus\{x_{0}\}$ for any $x_{0}\in\mathbb{R}^{3}$ and some $\lambda>0$ depending on $x_{0}$. As a consequence, Lemma 11.1 in \cite{LZ1} (see Lemma \ref{le10}) and the asymptotic properties of $(u,v,w)$ yield the desired classification results in Theorem \ref{thm0}.

\subsection{Conformally invariant systems with mixed order and critical and exponential growth in $\mathbb{R}^{4}$.}

We are also concerned with the following conformally invariant system with mixed order and critical and exponential growth in $\mathbb{R}^{4}$:
\begin{align}\label{eq41}
\begin{cases}
\ (-\Delta)^{\frac{1}{2}} u=e^{pw} ,&x\in \mathbb{R}^{4},\\
\ -\Delta v=u^2 ,&x\in \mathbb{R}^{4},\\
\ (-\Delta)^{2} w=v^{4} ,&x\in \mathbb{R}^{4},
\end{cases}
\end{align}
where $p>0$, $u,v\geq0$, $w(x)=o(|x|^{2})$ as $|x|\rightarrow+\infty$ and $v$ satisfies the finite total curvature condition $\int_{\mathbb{R}^{4}}v^{4}(x)\mathrm{d}x<+\infty$. We assume $(u,v,w)$ is a pair of classical solution to the $4$-D system \eqref{eq41} in the sense that $u\in C^{1,\epsilon}_{loc}(\mathbb{R}^{4})\cap \mathcal{L}_{1}(\mathbb{R}^{4})$ with arbitrarily small $\epsilon>0$, $v\in C^{2}(\mathbb{R}^{4})$ and $w\in C^{4}(\mathbb{R}^{4})$.

\medskip

Recently, Yu \cite{Yu} classified $(u,v)\in C^{2}(\mathbb{R}^{4})\times C^{4}(\mathbb{R}^{4})$ to the following conformally invariant system
\begin{equation}\label{Yu-s}\\\begin{cases}
-\Delta u(x)=e^{3v(x)}, \,\,\quad  u(x)\geq0, \qquad  x\in\mathbb{R}^{4}, \\
\Delta^{2}v(x)=u^{4}(x), \qquad\,\,\,  x\in\mathbb{R}^{4}, \\
\int_{\mathbb{R}^{4}}u^{4}(x)\mathrm{d}x<+\infty, \quad \int_{\mathbb{R}^{4}}e^{3v(x)}\mathrm{d}x<+\infty, \,\,\,\,\,\, v(x)=o\left(|x|^{2}\right) \,\,\,\, \text{as} \,\,\,\, |x|\rightarrow+\infty.
\end{cases}\end{equation}
For more classification results on mixed order conformally invariant systems, please c.f. \cite{DQ2,GP,GP1,HN}.

\medskip

The $4$-D system \eqref{eq41} has higher degree of nonlinearity and complexity than a single equation or systems involving only two components $(u,v)$. By using the method of moving spheres, we classify all the classical solutions $(u,v,w)$ to the conformally invariant $4$-D system \eqref{eq41} with mixed order and critical and exponential growth. One can observe that, if we assume the relationship $u^{2}=v^{3}=e^{3w}$ in $\mathbb{R}^{4}$ between $u$, $v$ and $w$ in the following three $4$-D conformally invariant equations:
\begin{equation}\label{g6}
  (-\Delta)^{\frac{1}{2}}u=u^{\frac{5}{3}}, \quad -\Delta v=v^{3} \quad \text{and} \quad (-\Delta)^{2}w=e^{4w} \quad \text{in} \,\, \mathbb{R}^{4}
\end{equation}
with the finite total curvature $\int_{\mathbb{R}^{4}}e^{4w(x)}\mathrm{d}x<+\infty$, the resulting system is the $4$-D system \eqref{eq41} with $p=\frac{5}{2}$ satisfying the finite total curvature condition $\int_{\mathbb{R}^{4}}v^{4}(x)\mathrm{d}x<+\infty$. 

\medskip

Our classification result for $4$-D system \eqref{eq41} is the following theorem.
\begin{thm} \label{thm1}
Assume $p>0$ and $(u,v,w)$ is a pair of classical solutions to the $4$-D system \eqref{eq41} such that $u,v\geq0$, $\int_{\mathbb{R}^{4}}v^{4}(x)\mathrm{d}x<+\infty$ and $w(x)=o(|x|^{2})$ as $|x|\rightarrow+\infty$. Suppose that \\
either $(H_1)$ $u=O(|x|^{K})$ or $v=O(|x|^{K})$ as $|x|\rightarrow+\infty$ for some $K\gg1$ arbitrarily large, \\
or $(H_2)$ $\int_{\mathbb{R}^{4}}e^{\Lambda pw(y)}\mathrm{d}y<+\infty$ for some $\Lambda>0$, \\
then $(u,v,w)$ must take the unique form
\begin{align*}\label{eq42}
u(x)=\left(\frac{2\left(\frac{160}{p}\right)^{\frac{1}{12}}\mu}{1+\mu^{2}|x-x_{0}|^{2}}\right)^
\frac{3}{2}, \,\,\,\,\,\, v(x)=\frac{2\left(\frac{30}{p}\right)^{\frac{1}{4}}\mu}{1+\mu^{2}|x-x_{0}|^{2}}, \,\,\,\,\,\, w(x)=\frac{5}{2p}\ln\left(\frac{4\left(\frac{2^{9}3^85}{p}\right)^{\frac{1}{20}}\mu}
{1+\mu^{2}|x-x_{0}|^{2}}\right),
\end{align*}
where $x_{0}$ is a fixed point in $ \mathbb{R}^{4}$ and $\mu>0$ and
$$\int_{\mathbb{R}^{4}}u^{2}(x)\mathrm{d}x=\lr\frac{10}{p}\rr^{\frac{1}{4}}
\frac{8\pi^2}{\mu}, \,\,\,\,\,\,
\int_{\mathbb{R}^{4}}v^{4}(x)\mathrm{d}x=\frac{40\pi^2}{p}, \,\,\,\,\,\,
\int_{\mathbb{R}^{4}}e^{pw(x)}\mathrm{d}x=\lr\frac{10}{p}\rr^{\frac{1}{8}}
\frac{16\pi^2}{\mu^{\frac{3}{2}}}.$$
\end{thm}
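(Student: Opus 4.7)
The plan is to follow the same blueprint as in the proof of Theorem \ref{thm0}, but now adapted to the $4$-D system whose fourth-order operator $(-\Delta)^{2}$ has a logarithmic fundamental solution and whose leading total curvature integral sits on $v^{4}$ rather than on $u^{3}$. I would organize the argument in three stages: derivation of integral representations, extraction of asymptotic behaviors at infinity, and a three-component method of moving spheres.

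\textbf{Stage 1 (integral representations).} From the finite total curvature hypothesis $\int_{\mathbb{R}^{4}}v^{4}\,dx<+\infty$, the growth assumption $w(x)=o(|x|^{2})$, and the super poly-harmonic machinery used for the $3$-D case (i.e.\ the analogues of Lemmas \ref{lem1}, \ref{lem2} and \ref{cor13}), I would first establish $-\Delta w\in\mathcal{L}_{1}(\mathbb{R}^{4})$ together with
$$-\Delta w(x)=\frac{1}{4\pi^{2}}\int_{\mathbb{R}^{4}}\frac{v^{4}(y)}{|x-y|^{2}}\,dy.$$
Since the fundamental solution of $(-\Delta)^{2}$ in $\mathbb{R}^{4}$ is a multiple of $\ln|x|$, Liouville-type lemmas (the $4$-D counterpart of Lemma \ref{lemma214}) will then yield
$$w(x)=\frac{1}{8\pi^{2}}\int_{\mathbb{R}^{4}}\ln\frac{|y|}{|x-y|}v^{4}(y)\,dy+C_{0}.$$
Using the Green's function $c_{4}|x|^{-3}$ of $(-\Delta)^{\frac{1}{2}}$ in $\mathbb{R}^{4}$ and the standard Newtonian kernel $\frac{1}{4\pi^{2}|x|^{2}}$ of $-\Delta$, analogous arguments would give the representations
$$u(x)=\frac{1}{\pi^{2}}\int_{\mathbb{R}^{4}}\frac{e^{pw(y)}}{|x-y|^{3}}\,dy,\qquad v(x)=\frac{1}{4\pi^{2}}\int_{\mathbb{R}^{4}}\frac{u^{2}(y)}{|x-y|^{2}}\,dy,$$
once sufficient decay of the data is known.

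\textbf{Stage 2 (asymptotics).} The crucial step is to prove $\lim_{|x|\to\infty}w(x)/\ln|x|=-\alpha$, where $\alpha:=\frac{1}{8\pi^{2}}\int_{\mathbb{R}^{4}}v^{4}\,dx$ plays the role of total curvature. The $\exp^{L}+L\ln L$ inequality from \cite{DQ2} will be used to control the logarithmic-singularity integrals near infinity. Once this is in place, the integrability $\int e^{pw}<+\infty$ forces $p\alpha>4$, and one then deduces $\lim_{|x|\to\infty}|x|^{3}u(x)$ and $\lim_{|x|\to\infty}|x|^{2}v(x)$ explicitly (when $\alpha$ is strictly larger than the critical threshold). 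Here the two alternative hypotheses enter: under $(H_{1})$, the polynomial bound on $u$ or $v$ gives the tail control needed to apply the Liouville-type identification directly; under $(H_{2})$, the a priori integrability $\int e^{\Lambda pw}<+\infty$ must be bootstrapped through the representation of $u$ to produce equivalent polynomial decay of $u$ (and hence of $v$ via $-\Delta v=u^{2}$) before feeding it back into the asymptotic analysis of $w$.

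\textbf{Stage 3 (moving spheres and rigidity).} Having the integral equation system together with the precise decay rates, I would run the method of moving spheres on all three Kelvin transforms $u_{x_{0},\lambda}$, $v_{x_{0},\lambda}$, $w_{x_{0},\lambda}$ simultaneously, as in the $3$-D case. For each $x_{0}\in\mathbb{R}^{4}$, I expect: the comparison $u_{x_{0},\lambda}\le u$, $v_{x_{0},\lambda}\le v$, $w_{x_{0},\lambda}\le w$ on the punctured ball $B_{\lambda}(x_{0})\setminus\{x_{0}\}$ holds for $\lambda$ large if $\alpha\ge 5/p$, and the reverse holds for $\lambda$ small if $\alpha$ is in the complementary range. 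The critical exponent $5/p$ is selected by the conformal weights in \eqref{eq41}. A contradiction with Lemmas 11.1--11.2 of \cite{LZ1} plus the finite total curvature condition rules out the strict inequalities $\alpha>5/p$ and $\alpha<5/p$, forcing $\alpha=5/p$ and $u_{x_{0},\lambda_{x_{0}}}\equiv u$, $v_{x_{0},\lambda_{x_{0}}}\equiv v$, $w_{x_{0},\lambda_{x_{0}}}\equiv w$ for every $x_{0}$; the classification and the three explicit integral identities then follow from the standard Kelvin-invariance calculation.

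\textbf{Main obstacle.} The hardest part is Stage 2 under hypothesis $(H_{2})$, where no direct polynomial control of $u$ or $v$ is available; one must leverage $\int e^{\Lambda pw}<+\infty$ together with the $(-\Delta)^{\frac{1}{2}}$-representation to propagate decay through the system and recover the logarithmic asymptotics of $w$ needed to launch the moving spheres. A secondary difficulty is the simultaneous handling of three operators of different orders ($\tfrac{1}{2}$, $2$, $4$) in the moving-spheres inequalities; the comparison principles must be derived for each Kelvin transform separately and then coupled through the nonlinear right-hand sides, with careful bookkeeping of the scaling factors to ensure that the same critical value $\alpha=5/p$ emerges from all three components.
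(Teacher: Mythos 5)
Your three-stage plan mirrors the paper's proof almost exactly: derive the integral representations for $u,v,w$ and $\Delta w$; extract $\lim_{|x|\to\infty} w(x)/\ln|x|=-\alpha$ with $\alpha=\frac{1}{8\pi^2}\int v^4$ and the ensuing decay of $u$ and $v$; then run a three-component method of moving spheres with Kelvin transforms to pin down $\alpha=\frac{5}{p}$ via Lemmas 11.1--11.2 of \cite{LZ1}. The structure, the role of the $\exp^L+L\ln L$ inequality, and the critical value $5/p$ all match.

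There is, however, one genuine gap in Stage 2, and it is precisely the point you flag as the ``main obstacle'' under hypothesis $(H_2)$. You write that $\int_{\mathbb{R}^4} e^{\Lambda p w}<+\infty$ for some $\Lambda>0$ ``must be bootstrapped through the representation of $u$'' to produce decay, but you do not indicate any mechanism that works when $\Lambda$ is small. One cannot simply feed $\int e^{\Lambda p w}<\infty$ into $u(x)=c\int e^{p w(y)}|x-y|^{-3}\,dy$, because this requires control of $e^{pw}$, not $e^{\Lambda p w}$. The resolution in the paper is Lemma \ref{lem46}, which invokes the bi-harmonic Brezis--Merle type exponential-integrability estimate (Lemma 2.3 of \cite{Lin}): one solves $\Delta^2 g=v^4$ on $B(x,4)$ with Navier boundary data, obtains a uniform bound $\int_{B(x,4)}e^{7p|g|}\le C$ from $v^4\in L^1$, decomposes $w=g+h$ with $h$ bi-harmonic, controls $\Delta h$ by the mean-value/Harnack inequality combined with the formula for $\Delta w$, and finally pairs the local $L^k$ bound on $h^+$ (which is where $(H_2)$ enters, via H\"older with $e^{\Lambda p w}$) with elliptic estimates to get $\sup_{B(x,1)}h\le C$ uniformly; this yields $\int_{B(x,1)}e^{7pw}\le C$, hence $u=O(|x|^3)$ and $v=O(|x|^6)$. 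Without this ingredient your Stage 2 cannot be completed under $(H_2)$, and the subsequent asymptotics and the moving-sphere step cannot be launched.

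A secondary remark: the Green function constant for $(-\Delta)^{\frac{1}{2}}$ in $\mathbb{R}^4$ is $\frac{1}{4\pi^2}|x|^{-3}$, not $\frac{1}{\pi^2}|x|^{-3}$ as you wrote; the correct representation is $u(x)=\frac{1}{4\pi^2}\int_{\mathbb{R}^4} e^{pw(y)}|x-y|^{-3}\,dy$. This is merely a factor-of-$4$ slip, but it propagates into the explicit constants $C_1,C_2,C_3$ in the final classification and the three total-curvature integrals, so it must be fixed before Stage 3 can deliver the stated formulas.
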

\begin{rem}\label{rem1}
One should note that, in Theorem \ref{thm1}, if we assume $v=O(|x|^{K})$ as $|x|\rightarrow+\infty$ for some $K\gg1$ arbitrarily large in the hypothesis $(H_1)$ or assume hypothesis $(H_2)$, then no assumption on $u$ will be needed. The finite total curvature condition $\int_{\mathbb{R}^{4}}v^{4}(x)\mathrm{d}x<+\infty$ and $w(x)=o(|x|^{2})$ at $\infty$ is necessary for the classification result of higher order conformally invariant equations or systems (see e.g. \cite{CY,Lin,WX,Yu,Z}). The assumption ``either $(H_1)$ or $(H_2)$" in Theorem \ref{thm1} is much weaker than ``$v=O(|x|^{K})$ as $|x|\rightarrow+\infty$ for some $K\gg1$ arbitrarily large". The hypothesis ``$u=O(|x|^{K})$ or $v=O(|x|^{K})$ as $|x|\rightarrow+\infty$ for some $K\gg1$ arbitrarily large" in $(H_1)$ is an extremely mild condition. In fact, the necessary condition for us to define $(-\Delta)^{\frac{1}{2}}u$ is $u\in\mathcal{L}_{1}$ (i.e., $\frac{u}{1+|x|^{5}}\in L^{1}(\mathbb{R}^{4})$), which already indicates that $u$ grows slowly and must has strictly less than linear growth at $\infty$ in the sense of integral. In addition to Theorem \ref{thm1}, by direct calculations, one can also find that
\begin{equation}\label{g13}
  \int_{\mathbb{R}^{4}}u^{\frac{8}{3}}(x)\mathrm{d}x=\left(\frac{20}{p}\right)^{\frac{1}{3}}\frac{8\pi^2}{3} \qquad \text{and} \qquad \int_{\mathbb{R}^{4}}e^{\frac{8}{5}pw(x)}\mathrm{d}x=128\left(\frac{2160}{p}\right)^{\frac{1}{5}}\pi^{2}.
\end{equation}
\end{rem}

\smallskip

We would like to mention some key ideas and main ingredients in our proof of Theorem \ref{thm1}.

\smallskip

First, we need to overcome the mutual restrictions between $(u,v,w)$ and derive the precise asymptotic behaviors at $\infty$ and the integral representation formulae of $u$, $v$ and $w$. To this end, from the finite total curvature condition $\int_{\mathbb{R}^{4}}v^{4}(x)\mathrm{d}x<+\infty$, we can derive the integral representation formula for $u$ (see Lemmas \ref{lemm212} and \ref{le51}), that is,
\begin{equation}\label{e1-4d}
  u(x)=\frac{1}{4\pi^{2}}\int_{\mathbb{R}^{4}}\frac{e^{pw(y)}}{|x-y|^{3}}\mathrm{d}y, \qquad v(x)=\frac{1}{4\pi^{2}}\int_{\mathbb{R}^{4}}\frac{u^2(y)}{|x-y|^{2}}\mathrm{d}y,
\end{equation}
and hence $|x|^{-2}u^{2}\in L^{1}(\mathbb{R}^{4})$ and $|x|^{-3}e^{pw}\in L^{1}(\mathbb{R}^{4})$. Then, from the finite total curvature condition $\int_{\mathbb{R}^{4}}v^{4}(x)\mathrm{d}x<+\infty$ and $w(x)=o(|x|^{2})$ at $\infty$, we can derive the integral representation formula for $\Delta w$ (see Lemmas \ref{lem59}, \ref{lem510} and \ref{lem45}), that is,
\begin{equation}\label{e2+4}
  \Delta w(x)=-\frac{1}{4\pi^{2}}\int_{\mathbb{R}^{4}}\frac{v^{4}(y)}{|x-y|^{2}}\mathrm{d}y.
\end{equation}
Then, the main key ingredient is, by exploiting a bi-harmonic version (in Lemma 2.3 of \cite{Lin}) of the Brezis-Merle's estimate in \cite{BF} and elliptic estimates, we can deduce from either the assumption $(H_{1})$ or the assumption $(H_{2})$ that $v(x)=O\left(|x|^{2K}\right)$ at $\infty$ for some $K\gg1$ arbitrarily large (see Lemma \ref{lem46}). Consequently, by using the $\exp^{L}+L\ln L$ inequality from \cite{DQ2}, we can get the asymptotic property $\lim\limits_{|x|\rightarrow+\infty}\frac{q(x)}{\ln|x|}=\alpha$, where $q(x):=\frac{1}{8\pi^{2}}\int_{\mathbb{R}^{4}}\ln\left[\frac{|x-y|}{|y|}\right]v^{4}(y)\mathrm{d}y$ and $\alpha:=\frac{1}{8\pi^{2}}\int_{\mathbb{R}^{4}}v^{4}(x)\mathrm{d}x$. Based on these properties, by Liouville type results in Corollary 2.10 in \cite{DQ2} deriving from Lemma 3.3 in Lin \cite{Lin} (see Lemma \ref{lemma214}), we can deduce from $|x|^{-3}e^{pw}\in L^{1}(\mathbb{R}^{4})$ the following integral representation formula for $w$:
\begin{equation}\label{e2-4d}
  w(x)=\frac{1}{8\pi^{2}}\int_{\mathbb{R}^{4}}\ln\left[\frac{|y|}{|x-y|}\right]v^{4}(y)\mathrm{d}y+C_{0}
\end{equation}
for some constant $C_{0}\in\mathbb{R}$, and hence the crucial asymptotic behavior $\lim\limits_{|x|\rightarrow+\infty}\frac{w(x)}{\ln|x|}=-\alpha$ (see Lemma \ref{lem515}). As a consequence, we derive that the total curvature $\alpha:=\frac{1}{8\pi^{2}}\int_{\mathbb{R}^{4}}v^{4}(x)\mathrm{d}x\geq\frac{1}{p}$. Moreover, if $\alpha>\frac{4}{p}$, we can get $\gamma:=\frac{1}{4\pi^{2}}\int_{\mathbb{R}^{4}}e^{pw(x)}\mathrm{d}x<+\infty$ and the asymptotic behavior $\lim\limits_{|x|\rightarrow+\infty}|x|^{3}u(x)=\gamma$, furthermore, $\beta:=\frac{1}{4\pi^2}\int_{\mathbb{R}^{4}}u^{2}(x)\mathrm{d}x<+\infty$ and the asymptotic behavior $\lim\limits_{|x|\rightarrow+\infty}|x|^{2}v(x)=\beta$ (see Corollary \ref{lem58}). Next, similar to the $3$-D system \eqref{a1}, by making use of these properties, we can apply the method of moving spheres to the IE system for $(u,v,w)$ consisting of \eqref{e1-4d} and \eqref{e2-4d}. The key point is to show $\alpha=\frac{5}{p}$ through contradiction arguments combining with Lemma 11.2 in \cite{LZ1} (see Lemma \ref{le10}), the finite total curvature condition and the $4$-D system \eqref{eq41}.

\smallskip

The rest of our paper is organized as follows. In section 2, we will carry out our proof of Theorem \ref{thm0}. Section 3 is devoted to proving our Theorem \ref{thm1}.

\smallskip

In what follows, we will use $C$ to denote a general positive constant that may depend on $p$ and $\Lambda$, and whose value may differ from line to line.

\section{Proof of Theorem \ref{thm0}}

In this section, we classify the classical solutions $(u,v,w)$ to the $3$-D system \eqref{a1} and hence carry out our proof of Theorem \ref{thm0}.

\smallskip

Suppose that $(u,v,w)$ is a pair of classical solution to the system \eqref{a1} with $u,v\geq0$.

\smallskip

We first prove the following integral representation formula for $u$.
\begin{lem} \label{le1}
Suppose that $\int_{\mathbb{R}^{3}}u^{3}(x)\mathrm{d}x<+\infty$. Then we have, for any $x\in \mathbb{R}^{3}$,
\begin{equation}\label{l1}
u(x)=\frac{1}{2\pi^2}\int_{\mathbb{R}^{3}}\frac{v^{4}(y)}{|x-y|^{2}}\mathrm{d}y.
\end{equation}
Furthermore, $u >0$ in $\mathbb{R}^{3}$, $u(x) \geq \frac{C}{|x|^{2}}$
for some constant $C> 0$ and $|x|$ large enough, and
\begin{equation}\label{eq22a}
\int_{\mathbb{R}^{3}}\frac{v^{4}(x)}{|x|^{2}}\mathrm{d}x<+\infty.
\end{equation}
\end{lem}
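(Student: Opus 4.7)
The plan is to identify $u$ with the Riesz potential of $v^{4}$, exploiting that $\frac{1}{2\pi^{2}|x|^{2}}$ is the fundamental solution of $(-\Delta)^{1/2}$ on $\mathbb{R}^{3}$. The strategy is the classical truncation $+$ maximum principle $+$ Liouville argument, with the finite total curvature hypothesis $\int u^{3}<+\infty$ used only at the very last step to kill an additive constant.

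First I would introduce the truncated Riesz potentials
\[
U_{R}(x):=\frac{1}{2\pi^{2}}\int_{B_{R}(0)}\frac{v^{4}(y)}{|x-y|^{2}}\,dy,\qquad R>0.
\]
Since $v\in C^{2}(\mathbb{R}^{3})$, the function $v^{4}$ is bounded on $\overline{B_{R}}$ and $|x-y|^{-2}$ is locally integrable in $\mathbb{R}^{3}$, so $U_{R}\in C(\mathbb{R}^{3})\cap\mathcal{L}_{1}(\mathbb{R}^{3})$, and the Riesz-potential identity yields $(-\Delta)^{1/2}U_{R}=v^{4}\chi_{B_{R}}$ in $\mathbb{R}^{3}$. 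Moreover, for $|x|\ge 2R$ one has $|x-y|\ge |x|/2$ on $B_{R}$, whence $U_{R}(x)\le C(R)/|x|^{2}\to 0$ as $|x|\to\infty$.

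Next, set $h_{R}:=u-U_{R}$, which belongs to $\mathcal{L}_{1}(\mathbb{R}^{3})$ and satisfies $(-\Delta)^{1/2}h_{R}=v^{4}\chi_{\mathbb{R}^{3}\setminus B_{R}}\ge 0$ in $\mathbb{R}^{3}$. Thus $h_{R}$ is a $(-\Delta)^{1/2}$-superharmonic function with $\liminf_{|x|\to\infty}h_{R}(x)\ge 0$ (since $u\ge 0$ and $U_{R}\to 0$ at infinity). The standard maximum principle for $(-\Delta)^{1/2}$-superharmonic functions in the $\mathcal{L}_{1}$ class then forces $h_{R}\ge 0$, i.e., $u\ge U_{R}$. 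Letting $R\to\infty$, monotone convergence gives that $U(x):=\frac{1}{2\pi^{2}}\int_{\mathbb{R}^{3}}v^{4}(y)|x-y|^{-2}\,dy$ is finite and $U(x)\le u(x)$ for every $x\in\mathbb{R}^{3}$. Now $h:=u-U\in\mathcal{L}_{1}(\mathbb{R}^{3})$ is non-negative and satisfies $(-\Delta)^{1/2}h\equiv 0$ in $\mathbb{R}^{3}$, so the Liouville theorem for non-negative $(-\Delta)^{1/2}$-harmonic functions on $\mathbb{R}^{3}$ (see e.g.\ \cite{CLL}) forces $h\equiv c$ for some $c\ge 0$. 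If $c>0$, then $u\ge c$ everywhere, and hence $\int_{\mathbb{R}^{3}}u^{3}\ge c^{3}|\mathbb{R}^{3}|=+\infty$, contradicting the assumption; therefore $c=0$ and \eqref{l1} holds.

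The remaining assertions follow immediately from \eqref{l1}. Evaluating at $x=0$ yields $\int_{\mathbb{R}^{3}}v^{4}(y)|y|^{-2}\,dy=2\pi^{2}u(0)<+\infty$, which is \eqref{eq22a}. The strict positivity $u>0$ comes from $v\not\equiv 0$ (for if $v\equiv 0$ the second equation of \eqref{a1} gives $e^{pw}\equiv 0$, impossible) combined with the strict positivity of the Riesz kernel. For the lower bound, I fix a ball $B_{\rho}(y_{0})$ on which $\int_{B_{\rho}(y_{0})}v^{4}\,dy=c_{0}>0$ and use $|x-y|\le 2|x|$ on this ball for $|x|$ large to obtain $u(x)\ge c_{0}/(8\pi^{2}|x|^{2})$. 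The main technical obstacle is the justification of the maximum principle / Liouville step for the nonlocal operator $(-\Delta)^{1/2}$, but both are by now standard in the $\mathcal{L}_{1}$ framework.
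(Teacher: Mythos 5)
Your proof is correct in substance and reaches the same conclusion, but the decomposition you use is genuinely different from the paper's. The paper introduces $u_R(x)=\int_{B_R(0)}G_R(x,y)v^4(y)\,dy$, where $G_R$ is the explicit Green's function of $(-\Delta)^{1/2}$ on the ball, so that $u_R$ vanishes identically outside $B_R$ and $(-\Delta)^{1/2}u_R=v^4$ inside; the comparison function $\varphi_R=u-u_R$ is then $1/2$-harmonic in $B_R$ and nonnegative on the complement, and the maximum principle (Lemma~\ref{lem52}) is applied on the \emph{bounded} domain $B_R$. You instead use the truncated Riesz potential $U_R$, for which $(-\Delta)^{1/2}U_R=v^4\chi_{B_R}$, so $h_R=u-U_R$ is $1/2$-superharmonic in all of $\mathbb{R}^3$ with $\liminf_{|x|\to\infty}h_R\ge 0$, and you apply the unbounded-domain version of the maximum principle. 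The Riesz-potential route is more elementary (no Green's function formula needed) and the passage $R\to\infty$ is cleaner by monotone convergence in $R$. One small technical caveat against your version: $v^4\chi_{B_R}$ is discontinuous across $\partial B_R$, so $U_R$ (and hence $h_R$) need not belong to $C^{1,\epsilon}_{loc}$ in a neighborhood of $\partial B_R$, and Lemma~\ref{lem52} requires this regularity to define $(-\Delta)^{1/2}h_R$ pointwise. This is easily repaired by replacing $\chi_{B_R}$ with a smooth cutoff $\eta_R$, $0\le\eta_R\le 1$, $\eta_R\equiv 1$ on $B_R$ and $\mathrm{supp}\,\eta_R\subset B_{2R}$; then $(-\Delta)^{1/2}U_R=v^4\eta_R$, $U_R\in C^{1,\epsilon}_{loc}$, and everything else goes through. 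The paper's Green-function truncation sidesteps this issue automatically because the right-hand side is never truncated, only the domain of the equation is. From the $R\to\infty$ stage onward (the Liouville step $h\equiv c$, killing $c$ via $\int u^3<\infty$, deriving \eqref{eq22a} by taking $x=0$, and the lower bound $u\gtrsim |x|^{-2}$), your argument coincides with the paper's.
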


\begin{proof}[{\sl Proof of Lemma~\ref{le1}}]
For arbitrary $R > 0$, we define
\begin{equation}\label{l3}
\ u_{R}(x)=\int_{B_{R}(0)}G_{R}(x,y)v^{4}(y)\mathrm{d}y,
\end{equation}
where the Green function $G_{R}(x,y)$ of $(-\Delta)^{\frac{1}{2}}$ (see \cite{K}) on the ball $B_{R}(0)$ satisfies
$$
\begin{cases}
\ (-\Delta)^{\frac{1}{2}}G_{R}(x,y)=\delta(x-y),& x,y\in B_{R}(0),\\
\ \qquad\quad G_{R}(x,y)=0,& x\ \text{or}\ y\ \in\mathbb{R}^{3}\backslash B_{R}(0)
\end{cases}
$$
with
$$ G_{R}(x,y)=\frac{C}{|x-y|^{2}}\int_{0}^{\frac{t_{R}}{d_{R}}}\frac{1}{(1+z)^{\frac{3}{2}}\sqrt{z}}\mathrm{d}z,  \ x,y\in B_{R}(0),$$
 and
$$d_{R}=\frac{|x-y|^{2}}{R^{2}},\,\,\,\, t_{R}=\left(1-\frac{|x|^{2}}{R^{2}}\right)\left(1-\frac{|y|^{2}}{R^{2}}\right),\,\,\,\,
C=\frac{1}{2\pi^2}\left(\int_{0}^{+\infty}\frac{1}{(1+z)^{\frac{3}{2}}\sqrt{z}}\mathrm{d}z\right)^{-1}.$$
From this, we can deduce that $ u_{R}\in C(\mathbb{R}^{3})\cap \mathcal{L}_{1}(\mathbb{R}^{3})\cap C_{loc}^{1,\var}(B_{R}(0))$ solves
\begin{equation*}
\begin{cases}
\ (-\Delta)^{\frac{1}{2}} u_{R}(x)=v^{4}(x) ,&x\in B_{R}(0),\\
\ u_{R}(x)=0,&x\in\mathbb{R}^{3}\backslash B_{R}(0).
\end{cases}
\end{equation*}
Let $\varphi_{R}(x)=u(x)-u_{R}(x)\in C(\mathbb{R}^{3})\cap \mathcal{L}_{1}(\mathbb{R}^{3})\cap C_{loc}^{1,\var}(B_{R}(0)).$ A direct computation derives that
\begin{equation*}
\begin{cases}
\ (-\Delta)^{\frac{1}{2}} \varphi_{R}(x)=0 ,&x\in B_{R}(0),\\
\ \varphi_{R}(x)\geq0,&x\in\mathbb{R}^{3}\backslash B_{R}(0).
\end{cases}
\end{equation*}

Now we need the following maximum principle for fractional Laplacians.
\begin{lem}[Maximum principle, \textbf{\cite{CLL,S}}]\label{lem52}
Let $\Omega$ be a bounded domain in $\mathbb{R}^{n}$, $n\geq2$ and $0<\alpha<2$.  Assume that $u\in C_{loc}^{[\alpha],\{\alpha\}+\var}(\Omega)\cap \mathcal{L}_{\alpha}(\Omega)$ with arbitrarily small $\varepsilon>0$ and is lower semi-continuous on $\overline{\Omega}$. If
$$
\begin{cases}
\ (-\Delta)^{\frac{\alpha}{2}} u(x)\geq0 & \text{in}\ \Omega,\\
\ \qquad u\geq0 & \text{in}\ \mathbb{R}^{n}\backslash\Omega,
\end{cases}
$$
then
$$u\geq0 \qquad \text{in}\,\, \R^n.$$
Moreover, if $u=0$ at some point in $\Omega$, then $u=0$ a.e. in $\R^n$. These conclusions also apply to the unbounded domain $\Omega$ if we further assume
$$\liminf_{|x|\to\infty}u(x)\geq0.$$
\end{lem}
By the maximum principle of the fractional Laplacian in Lemma \ref{lem52}, it follows that
\begin{equation}\label{l6}
\varphi_{R}(x)\geq0,\ x\in\mathbb{R}^{3}.
\end{equation}
Let $R\rightarrow+\infty$ in \eqref{l6}, for each fixed $x\in\mathbb{R}^{3}$, we obtain
\begin{equation}\label{l7}
u(x)\geq \frac{1}{2\pi^2}\int_{\mathbb{R}^{3}}\frac{v^{4}(y)}{|x-y|^{2}}\mathrm{d}y:=\tilde{u}(x).
\end{equation}
Taking $x=0$ in \eqref{l7}, we find $v$ satisfies the following integrability
\begin{equation*}
\int_{\mathbb{R}^{3}}\frac{v^{4}(y)}{|y|^{2}}\mathrm{d}y\leq 2\pi^2 u(0)<+\infty.
\end{equation*}
Note that $\tilde{u}\in \mathcal{L}_{1}(\mathbb{R}^{3})\cap C_{loc}^{1,\var}(\mathbb{R}^{3})$ is a solution of
\begin{equation}\label{l9}
(-\Delta)^{\frac{1}{2}} \tilde{u}(x)=v^{4}(x),\ x\in\mathbb{R}^{3}.
\end{equation}
Let $\varphi(x)=u(x)-\tilde{u}(x)\in \mathcal{L}_{1}(\mathbb{R}^{3})\cap C_{loc}^{1,\var}(\mathbb{R}^{3})$, from system \eqref{a1}, \eqref{l7} and \eqref{l9}, it holds
\begin{equation*}
\begin{cases}
\ (-\Delta)^{\frac{1}{2}} \varphi(x)=0 ,&x\in \mathbb{R}^{3},\\
\ \varphi(x)\geq0 ,&x\in \mathbb{R}^{3}.
\end{cases}
\end{equation*}

Next, we need Liouville theorem for $\alpha$-harmonic function in $\R^n$ with $n\geq2$.
\begin{lem}[Liouville theorem, \cite{BKN}]\label{lem53}
Let $0<\alpha<2$ and $n\geq2$. Assume that $u$ is a strong solution of
$$
\begin{cases}
\ (-\Delta)^{\frac{\alpha}{2}} u(x)=0,& x\in\ \mathbb{R}^{n},\\
\ \quad\quad u\geq0,& x\in\ \mathbb{R}^{n}.
\end{cases}
$$
Then $u\equiv C\geq0$.
\end{lem}
By the Liouville theorem of the fractional Laplacion in Lemma \ref{lem53}, there exists nonnegative constant $C$, such that $\varphi(x)\equiv C$. Thus, we have proved that
\begin{equation}\label{l11}
u(x)= \frac{1}{2\pi^2}\int_{\mathbb{R}^{3}}\frac{v^{4}(y)}{|x-y|^{2}}\mathrm{d}y+C>C\geq0.
\end{equation}
Moreover, by the finite total curvature condition $\int_{\mathbb{R}^{3}}u^{3}(x)\mathrm{d}x<+\infty$, we have
\begin{equation*}
\int_{\mathbb{R}^{3}}C^{3}\mathrm{d}x<\int_{\mathbb{R}^{3}}u^{3}(x)\mathrm{d}x<+\infty,
\end{equation*}
which indicates that $C=0$. Thus, we arrived at the integral representation formula \eqref{l1} of $u$. From \eqref{l1}, we get, for any $|x|$ sufficiently
large,
\begin{equation*}
\begin{aligned}
u(x)&= \frac{1}{2\pi^2}\int_{\mathbb{R}^{3}}\frac{v^{4}(y)}{|x-y|^{2}}\mathrm{d}y\geq\frac{1}{2\pi^2}\int_{1\leq|y|<\frac{|x|}{2}}\frac{v^{4}(y)}{|x-y|^{2}}\mathrm{d}y\\
&\geq \frac{2}{9\pi^2|x|^{2}}\int_{1\leq|y|<\frac{|x|}{2}}\frac{v^{4}(y)}{|y|^{2}}\mathrm{d}y\geq \frac{1}{9\pi^2|x|^{2}}\int_{|y|\geq1}\frac{v^{4}(y)}{|y|^{2}}\mathrm{d}y\\
&:=\frac{C}{|x|^{2}}.
\end{aligned}
\end{equation*}
This finishes our proof of Lemma \ref{le1}.
\end{proof}

Then we are going to prove the integral representation formula of $v$.
\begin{lem}\label{le2}
Suppose that $\int_{\mathbb{R}^{3}}u^{3}(x)\mathrm{d}x<+\infty$. Then we have, for any $x\in \mathbb{R}^{3}$,
\begin{equation}\label{lll1}
v(x)=\frac{1}{4\pi}\int_{\mathbb{R}^{3}}\frac{e^{pw(y)}}{|x-y|}\mathrm{d}y.
\end{equation}
Furthermore, $v >0$ in $\mathbb{R}^{3}$, $v(x) \geq \frac{C}{|x|}$
for some constant $C > 0$ and $|x|$ large enough, and
\begin{equation}\label{29}
\int_{\mathbb{R}^{3}}\frac{e^{pw(x)}}{|x|}\mathrm{d}x<+\infty.
\end{equation}
\end{lem}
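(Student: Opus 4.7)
The plan is to follow the blueprint of Lemma \ref{le1}, replacing the fractional Laplacian $(-\Delta)^{1/2}$ by the classical Laplacian $-\Delta$ and using its Newtonian fundamental solution $\frac{1}{4\pi|x-y|}$ in $\mathbb{R}^{3}$. The main steps will be: (i) solve the Dirichlet problem on balls and pass to the limit to obtain the one-sided inequality $v \geq \tilde v$, where $\tilde v$ is the Newtonian potential of $e^{pw}$; (ii) apply the classical Liouville theorem to reduce $v - \tilde v$ to a nonnegative constant; (iii) rule out a positive constant by invoking the integrability \eqref{eq22a} from Lemma \ref{le1}; and (iv) derive the pointwise lower bound $v(x) \geq C/|x|$ by restricting the region of integration.

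For step (i), let $H_R(x,y)$ denote the classical Green's function of $-\Delta$ on $B_R(0) \subset \mathbb{R}^{3}$ with zero Dirichlet boundary data, which is everywhere positive and satisfies $H_R(x,y) \nearrow \frac{1}{4\pi|x-y|}$ as $R \to +\infty$. Set $v_R(x) := \int_{B_R(0)} H_R(x,y) e^{pw(y)}\,\mathrm{d}y$, which lies in $C^{2}(B_R) \cap C(\overline{B_R})$ and solves $-\Delta v_R = e^{pw}$ in $B_R$ with $v_R = 0$ on $\partial B_R$ (since $e^{pw} \in C(\overline{B_R})$). Then $\varphi_R := v - v_R$ is harmonic in $B_R$ and satisfies $\varphi_R = v \geq 0$ on $\partial B_R$, so the classical maximum principle yields $\varphi_R \geq 0$ in $B_R$. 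Letting $R \to +\infty$ and using the monotone convergence theorem gives, pointwise,
$$v(x) \geq \tilde v(x) := \frac{1}{4\pi}\int_{\mathbb{R}^{3}} \frac{e^{pw(y)}}{|x-y|}\,\mathrm{d}y.$$
Evaluating at $x = 0$ then yields \eqref{29}.

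Next, for step (ii), the function $\varphi := v - \tilde v$ is nonnegative on $\mathbb{R}^{3}$ and harmonic there (both $v$ and $\tilde v$ satisfy $-\Delta(\cdot) = e^{pw}$, with regularity of $\tilde v$ following from local boundedness of $e^{pw}$ via standard potential theory). The classical Liouville theorem for nonnegative harmonic functions on $\mathbb{R}^{3}$ then forces $\varphi \equiv C$ for some $C \geq 0$. The main obstacle is step (iii), ruling out $C > 0$, which I would resolve by contradiction. If $C > 0$, then $v \geq C$ everywhere, hence $v^{4}(y)/|y|^{2} \geq C^{4}/|y|^{2}$, whose integral over $\mathbb{R}^{3}$ diverges at infinity, contradicting \eqref{eq22a} from Lemma \ref{le1}. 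Therefore $C = 0$ and the representation \eqref{lll1} follows; strict positivity $v > 0$ is immediate since $e^{pw} > 0$.

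Finally, for the pointwise lower bound, I restrict the integral in \eqref{lll1} to the annular region $\{1 \leq |y| < |x|/2\}$: there $|x - y| \leq 3|x|/2$, and using $|y| \geq 1$ to insert a factor $1/|y|$ in the integrand yields, for $|x|$ sufficiently large,
$$v(x) \geq \frac{1}{6\pi|x|}\int_{1\leq|y|<|x|/2} \frac{e^{pw(y)}}{|y|}\,\mathrm{d}y \geq \frac{1}{12\pi|x|}\int_{|y|\geq 1}\frac{e^{pw(y)}}{|y|}\,\mathrm{d}y =: \frac{C}{|x|},$$
which is strictly positive by \eqref{29} and the positivity of $e^{pw}$, completing the plan.
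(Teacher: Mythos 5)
Your proposal is correct and follows essentially the same route as the paper: set up the Dirichlet Green's function on balls, apply the maximum principle to get $v \geq \tilde v$, pass to the limit, invoke the Liouville theorem for nonnegative harmonic functions to reduce $v - \tilde v$ to a constant $C \geq 0$, and rule out $C > 0$ by observing that $v \geq C$ would contradict the integrability $\int_{\mathbb{R}^3} v^4/|x|^2\,\mathrm{d}x < +\infty$ established in Lemma \ref{le1}. The paper compresses all of this to ``by maximum principle and Liouville theorem for $-\Delta$, using similar arguments as \eqref{l3}--\eqref{l11}'' and gives the same annular-restriction argument for the lower bound $v(x) \gtrsim |x|^{-1}$, so aside from slightly different numerical constants in the final estimate, your argument matches the paper's.
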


\begin{proof}[{\sl Proof of Lemma~\ref{le2}}]
By maximum principle and Liouville theorem for $-\Delta$, using similar arguments as \eqref{l3}--\eqref{l11} in the proof of Lemma \ref{le1}, we can derive that
\begin{equation}\label{z1}
v(x)=\frac{1}{4\pi}\int_{\mathbb{R}^{3}}\frac{e^{pw(y)}}{|x-y|}\mathrm{d}y+C>C\geq0.
\end{equation}
From $\int_{\mathbb{R}^{3}}\frac{v^{4}(x)}{|x|^{2}}\mathrm{d}x<+\infty$ in \eqref{eq22a}, we can infer immediately that $C=0$. Therefore, $v$ satisfies the integral equation \eqref{lll1}.
Taking $x=0$ in \eqref{z1}, we obtain
$$\int_{\mathbb{R}^{3}}\frac{e^{pw(y)}}{|y|}\mathrm{d}y\leq4\pi v(0)<+\infty.$$
Furthermore, from the integral equation \eqref{lll1} of $v$, we obtain, for any $|x|$ sufficiently large,
\begin{equation*}
\begin{aligned}
v(x)&= \frac{1}{4\pi}\int_{\mathbb{R}^{3}}\frac{e^{pw(y)}}{|x-y|}\mathrm{d}y \geq\frac{1}{4\pi}\int_{1\leq|y|<\frac{|x|}{2}}\frac{e^{pw(y)}}{|x-y|}\mathrm{d}y\\
&\geq \frac{1}{9\pi|x|}\int_{1\leq|y|<\frac{|x|}{2}}\frac{e^{pw(y)}}{|y|}\mathrm{d}y\geq \frac{1}{18\pi|x|}\int_{|y|\geq1}\frac{e^{pw(y)}}{|y|}\mathrm{d}y\\
&:=\frac{C}{|x|},
\end{aligned}
\end{equation*}
which finishes the proof of Lemma \ref{le2}.
\end{proof}

Now, we are to show the integral representation formula of $w$. To this end, define
\begin{equation}\label{zz7}
\alpha:=\frac{1}{2\pi^2}\int_{\mathbb{R}^{3}}u^{3}(x)\mathrm{d}x
\end{equation}
and
\begin{equation}\label{dd1}
k(x):=\frac{1}{2\pi^2}\int_{\mathbb{R}^{3}}\ln\left(\frac{|y|}{|x-y|}\right)u^{3}(y)\mathrm{d}y.
\end{equation}
Since $\int_{\mathbb{R}^{3}}u^{3}(x)\mathrm{d}x<+\infty$, then it is easy to see that $\alpha$ and $k(x)$ are well-defined. Furthermore, $k(x)$ satisfies
\begin{equation}\label{214}
-\Delta k(x)=\frac{1}{2\pi^2}\int_{\mathbb{R}^{3}}\frac{u^{3}(y)}{|x-y|^{2}}\mathrm{d}y
\end{equation}
and
\begin{equation}\label{dd2}
(-\Delta)^{\frac{3}{2}} k(x)=u^3(x).
\end{equation}

We can derive the following upper bound estimate on $k(x)$.
\begin{lem}\label{lem1}
Suppose $\int_{\mathbb{R}^{3}}u^{3}(x)\mathrm{d}x<+\infty$. Then there is a constant $C$ such that
\begin{equation}\label{dd3}
-k(x)\leq\alpha\ln|x|+C.
\end{equation}
\end{lem}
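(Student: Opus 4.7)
The idea is to split the integral defining $k(x)$ according to whether $|y|\geq|x|$ or $|y|<|x|$, and to use the elementary inequality $|x-y|\leq |x|+|y|\leq 2\max(|x|,|y|)$, which yields
\[
\ln\frac{|x-y|}{|y|} \;\leq\; \ln 2 + \max\bigl(\,0,\ \ln|x|-\ln|y|\,\bigr).
\]
On the outer region $\{|y|\geq|x|\}$ the right side is at most $\ln 2$, and integrating against $u^{3}$ gives a contribution of at most $\alpha\ln 2$ (by the definition of $\alpha$). On the inner region $\{|y|<|x|\}$ the right side equals $\ln 2+\ln|x|-\ln|y|$, and the $\ln 2+\ln|x|$ piece integrates against $u^{3}$ to at most $\alpha(\ln 2+\ln|x|)$.

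\textbf{Handling the singular term $-\ln|y|$.} The only delicate piece is then $\frac{1}{2\pi^{2}}\int_{|y|<|x|}(-\ln|y|)\,u^{3}(y)\,dy$. I would split this further into $\{|y|<1\}$, where $-\ln|y|\geq 0$, and $\{1\leq|y|<|x|\}$, where $-\ln|y|\leq 0$ and the contribution may simply be dropped. On $\{|y|<1\}$, the classical-solution hypothesis gives $u\in C^{1,\epsilon}_{loc}(\mathbb{R}^{3})$; in particular $u$ is bounded on $\overline{B_{1}(0)}$, so
\[
\int_{|y|<1}(-\ln|y|)\,u^{3}(y)\,dy \;\leq\; \|u\|_{L^{\infty}(B_{1})}^{3}\int_{|y|<1}|\ln|y||\,dy \;<\;+\infty,
\]
where finiteness of the last integral uses $\ln|\cdot|\in L^{1}_{loc}(\mathbb{R}^{3})$. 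Summing the three contributions yields $-k(x)\leq \alpha\ln|x|+C$ for $|x|$ large; for small $|x|$, a parallel triangle-inequality argument (using $|x-y|\leq 1+|y|$ when $|x|\leq 1$) shows $-k(x)$ is uniformly bounded, which yields the stated bound after enlarging $C$ in the range where it is meaningful.

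\textbf{Main obstacle.} The only genuine difficulty is the logarithmic singularity of the integrand at $y=0$; it is tamed by combining the local $L^{\infty}$ bound on $u$ (inherited from the $C^{1,\epsilon}_{loc}$ regularity that comes with the classical-solution assumption) with the fact that $\ln|\cdot|\in L^{1}_{loc}(\mathbb{R}^{3})$. The remainder of the argument is just the triangle inequality plus a two-piece decomposition of the domain of integration.
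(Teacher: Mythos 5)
Your proof is correct and follows essentially the same route as the paper's: split the domain of integration into regions, use the triangle inequality to compare $\ln|x-y|$ with $\ln|x|$ and $\ln|y|$, and control the singular $-\ln|y|$ contribution near the origin via the local boundedness of $u$ (from $u\in C^{1,\epsilon}_{loc}$) together with $\ln|\cdot|\in L^1_{loc}(\mathbb{R}^3)$. The paper cuts on $\{|y-x|\lessgtr |x|/2\}$ and then on $\{|y|\lessgtr 2\}$, whereas you cut on $\{|y|\lessgtr|x|\}$ and then $\{|y|\lessgtr 1\}$; this is a cosmetic difference, and your single packaged inequality $\ln\tfrac{|x-y|}{|y|}\le \ln 2+(\ln|x|-\ln|y|)_+$ is, if anything, a slightly cleaner way to collect both contributions into $\alpha\ln|x|+C$. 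One small point worth making explicit: the stated bound can only be meant for $|x|$ bounded away from $0$ (say $|x|\ge 1$), since $-k(x)$ is continuous and finite at $x=0$ while $\alpha\ln|x|\to-\infty$; both your argument and the paper's implicitly work in this regime, which is all that is used later.
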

\begin{proof}[{\sl Proof of Lemma~\ref{lem1}}]
We divide the integral region $\R^3$ into two parts: $\R^3=A_1\cup A_2$, where
$$A_1=\left\{y||y-x|\leq\frac{|x|}{2}\right\},\ A_2=\left\{y||y-x|\geq\frac{|x|}{2}\right\}.$$
For $y\in A_1$, we have $|y|\geq|x|-|x-y|\geq\frac{|x|}{2}\geq|x-y|$, which indicates
$$\ln\left(\frac{|x-y|}{|y|}\right)\leq0.$$

For $y\in A_2\cap\{|y|\geq2\}$, we have $|x-y|\leq|x|+|y|\leq|x||y|$; for $y\in A_2\cap\{|y|\leq2\}$, we have $\ln|x-y|\leq\ln|x|.$ Thus, we get
\begin{align*}
-k(x)&\leq\frac{1}{2\pi^2}\int_{A_2}\ln\left(\frac{|x-y|}{|y|}\right)u^3(y)\mathrm{d}y\\
&\leq\frac{1}{2\pi^2}\int_{A_2\cap\{|y|\geq2\}}\ln\left(\frac{|x-y|}{|y|}\right)u^3(y)\mathrm{d}y
+\frac{1}{2\pi^2}\int_{A_2\cap\{|y|\leq2\}}\ln\left(\frac{|x-y|}{|y|}\right)u^3(y)\mathrm{d}y\\
&\leq\frac{\ln|x|}{2\pi^2}\int_{A_2\cap\{|y|\geq2\}}u^3(y)\mathrm{d}y
+C\\&\leq\alpha\ln|x|+C.
\end{align*}
This finishes the proof of Lemma \ref{lem1}.
\end{proof}

Next, we will obtain the integral representation formula for $\Delta w(x)$.
\begin{lem} \label{lem2}
Suppose $\int_{\mathbb{R}^{3}}u^{3}(x)\mathrm{d}x<+\infty$, then $\Delta w(x)$ can be represented by
\begin{equation}\label{q13d}
\Delta w(x)=-\frac{1}{2\pi^2}\int_{\mathbb{R}^{3}}\frac{u^{3}(y)}{|x-y|^{2}}\mathrm{d}y-C
\end{equation}
for some constant $C\geq0$.
\end{lem}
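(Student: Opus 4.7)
The strategy is to mimic the Green-function / maximum-principle / Liouville argument of Lemma~\ref{le1}, applied to $f := -\Delta w$ (playing the role of $u$) and to the nonlocal equation $(-\Delta)^{\frac{1}{2}}f = u^{3}$ (playing the role of $(-\Delta)^{\frac{1}{2}}u = v^{4}$). Indeed, under the paper's decomposition $(-\Delta)^{\frac{3}{2}}w = (-\Delta)^{\frac{1}{2}}(-\Delta w)$, the third equation of \eqref{a1} becomes $(-\Delta)^{\frac{1}{2}}f = u^{3} \geq 0$ pointwise on $\R^{3}$ (immediate when $\Delta w \in \mathcal{L}_{1}$; otherwise one first upgrades $w \in \mathcal{L}_{1}$ to $\Delta w \in \mathcal{L}_{1}$ via \eqref{distribution}). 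Thus the right-hand side $u^{3}$ is non-negative and lies in $L^{1}(\R^{3})$, which are precisely the structural features of $v^{4}$ exploited in Lemma~\ref{le1}.

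First I would verify that the candidate potential
$$\tilde{f}(x) := \frac{1}{2\pi^{2}}\int_{\R^{3}}\frac{u^{3}(y)}{|x-y|^{2}}\,\mathrm{d}y$$
is finite, continuous, satisfies $\tilde{f}(x) = O(|x|^{-2})$ at infinity (so $\tilde{f} \in \mathcal{L}_{1}(\R^{3})$), and solves $(-\Delta)^{\frac{1}{2}}\tilde{f} = u^{3}$ pointwise on $\R^{3}$, since $\frac{1}{2\pi^{2}|x|^{2}}$ is the fundamental solution of $(-\Delta)^{\frac{1}{2}}$ on $\R^{3}$ (it is exactly the Green kernel appearing in Lemma~\ref{le1}). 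For each $R > 0$, I would then introduce the truncation $\tilde{f}_{R}(x) := \int_{B_{R}(0)} G_{R}(x,y) u^{3}(y)\,\mathrm{d}y$ with $G_{R}$ as in Lemma~\ref{le1}, giving $0 \leq \tilde{f}_{R} \nearrow \tilde{f}$, $(-\Delta)^{\frac{1}{2}}\tilde{f}_{R} = u^{3}$ in $B_{R}$, and $\tilde{f}_{R} \equiv 0$ outside $B_{R}$. The difference $\varphi_{R} := f - \tilde{f}_{R}$ is $\frac{1}{2}$-harmonic in $B_{R}$ and coincides with $-\Delta w$ outside $B_{R}$. Granted the lower bound $\liminf_{|x| \to \infty}(-\Delta w)(x) \geq 0$, the fractional maximum principle (Lemma~\ref{lem52}) yields $\varphi_{R} \geq 0$ on $\R^{3}$; monotone convergence as $R \to \infty$ produces $\varphi := f - \tilde{f} \geq 0$ with $(-\Delta)^{\frac{1}{2}}\varphi = 0$ on $\R^{3}$, and the Liouville theorem (Lemma~\ref{lem53}) forces $\varphi \equiv C$ for some constant $C \geq 0$. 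Rearranging gives precisely \eqref{q13d}.

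The main obstacle, which was automatic in Lemma~\ref{le1} from the hypothesis $u \geq 0$, is establishing the lower bound $\liminf_{|x| \to \infty}(-\Delta w)(x) \geq 0$ needed to trigger the maximum principle; no such pointwise sign on $\Delta w$ is given here. I would handle it by combining the finite total curvature $u^{3} \in L^{1}(\R^{3})$, the membership $\Delta w \in \mathcal{L}_{1}(\R^{3})$ and the decay of $v$ from Lemma~\ref{le2} with a Brezis--Merle--Lin type estimate (in the spirit of Lemma~2.3 of Lin~\cite{Lin}) applied to $(-\Delta)^{\frac{1}{2}}f = u^{3}$; alternatively, one can compare $f$ with the truncated Riesz potentials $\tilde{f}_{R}$ via the common nonlocal equation and pass to the limit. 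Note that the statement only requires $C \geq 0$ and does not yet invoke $w(x) = o(|x|^{2})$; that growth constraint is reserved for Lemma~\ref{cor13}, where it will be used to upgrade the constant to $C = 0$.
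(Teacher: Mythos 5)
Your proposal diverges from the paper's proof in a way that, as you yourself flag, leaves the central difficulty unresolved. The paper does not mimic Lemma~\ref{le1} at all: it sets $m := w - k$, observes from $(-\Delta)^{\frac{3}{2}}m = 0$ that $\Delta m$ is a \emph{classical} harmonic function, and then determines its sign by a mean-value/Jensen argument. Integrating the spherical mean-value identity gives $\bbint_{\partial B_{r}(x_{0})}m\,\mathrm{d}\sigma - m(x_{0}) = \frac{r^{2}}{6}\Delta m(x_{0})$; Jensen's inequality turns this into $e^{\frac{pr^{2}}{6}\Delta m(x_{0})} \leq e^{-pm(x_{0})}\bbint_{\partial B_{r}}e^{pm}$; and then the upper bound $-k(x) \leq \alpha\ln|x| + C$ from Lemma~\ref{lem1} plus the integrability $|x|^{-1}e^{pw}\in L^{1}(\R^{3})$ from Lemma~\ref{le2} force $\Delta m(x_{0}) \leq 0$ for every $x_{0}$, because otherwise $r^{1-p\alpha}e^{\frac{pr^{2}}{6}\Delta m(x_{0})}$ would grow exponentially and could not be integrable on $[1,\infty)$. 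The classical Liouville theorem for a harmonic function bounded above then gives $\Delta m \equiv -C$ with $C \geq 0$, and \eqref{214} yields the claim.

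The gap in your approach is precisely the one you call ``the main obstacle'': both Lemma~\ref{lem52} (maximum principle) and Lemma~\ref{lem53} (Liouville) need a nonnegativity hypothesis — either $\varphi_{R} \geq 0$ on $\R^{3}\setminus B_{R}$, i.e. $-\Delta w \geq 0$ outside a ball, or $\liminf_{|x|\to\infty}(-\Delta w)(x) \geq 0$ together with $\varphi \geq 0$ everywhere — and this is exactly what one does \emph{not} have a priori. In Lemma~\ref{le1} the analogous hypothesis was free because $u \geq 0$ was assumed; here the sign of $-\Delta w$ is precisely what the lemma is establishing, so assuming it is circular. Neither of your suggested remedies supplies it: a Brezis--Merle/Lin-type estimate (even granting a fractional analogue applies to $(-\Delta)^{\frac{1}{2}}f = u^{3}$) controls integrals of $e^{c|f|}$ and not the pointwise sign of $f$; and ``comparing $f$ with the truncated Riesz potentials via the common nonlocal equation'' is exactly the maximum-principle step you are trying to justify, so it cannot be invoked to produce the hypothesis it needs. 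The actual leverage comes from the known integrability of $|x|^{-1}e^{pw}$ — a fact about $w$ itself, not about $\Delta w$ — and this enters only through the Jensen route, which your scheme does not use. I would recommend abandoning the Green-function mimicry and instead following the $m = w-k$ / harmonicity-of-$\Delta m$ / Jensen argument.
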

\begin{proof}[{\sl Proof of Lemma~\ref{lem2}}]
Let $m(x)=w(x)-k(x)$, from the equation \eqref{a1} and \eqref{dd2}, we have
$$(-\Delta)^{\frac{3}{2}} m(x)=0,$$ which implies
$(-\Delta)^{2} m(x)=0,$ i.e., $\Delta m$ is a harmonic function in $\R^3$. From the mean value theorem of harmonic function, for any $x_0\in\R^3$ and $r>0$, we derive
\begin{align}\label{dd219}
\Delta m(x_0)&=\frac{3}{4\pi r^3}\int_{|y-x_0|\leq r}\Delta m(y)\mathrm{d}y\\
&=\frac{3}{4\pi r^3}\int_{|y-x_0|= r}\frac{\partial m}{\partial r}\mathrm{d}\sigma\nonumber.
\end{align}
Integrating the equation \eqref{dd219} from $0$ to $r$, we obtain
\begin{align*}
\frac{r^2}{6}\Delta m(x_0)&=\bbint_{|y-x_0|= r}m(y)\mathrm{d}\sigma-m(x_0).
\end{align*}
Using the Jensen's inequality, we have
\begin{align}\label{dd221}
e^{\frac{pr^2}{6}\Delta m(x_0)}&=e^{-pm(x_0)}e^{\bbint_{|y-x_0|= r}pm(y)\mathrm{d}\sigma}\\
&\leq e^{-pm(x_0)}\bbint_{|y-x_0|= r}e^{pm(y)}\mathrm{d}\sigma\nonumber.
\end{align}
By the inequality \eqref{dd3}, we have $m(x)=w(x)-k(x)\leq w(x)+\alpha\ln|x|+C$. Combining this with \eqref{dd221} and the formula \eqref{29} of $|x|^{-1}e^{pw(x)}\in L^1(\R^3)$, we get
$$r^{1-p\alpha}e^{\frac{pr^2}{6}\Delta m(x_0)}\in L_{r}^1\left([1,+\infty)\right).$$
Thus $\Delta m(x_0)\leq0$ for all $x_0\in\R^3$. By Liouville Theorem, $\Delta m(x)=-C$ in $\R^3$ for some constant $C\geq0$, which combines with \eqref{214} yield \eqref{dd3}. This completes the proof of Lemma \ref{lem2}.
\end{proof}

Now we need to assume $w(x)=o(|x|^{2})$ at $\infty$, and we can get the precise integral representation formula for $\Delta w(x)$.
\begin{lem}\label{cor13}
Assume $\int_{\mathbb{R}^{3}}u^{3}(x)\mathrm{d}x<+\infty$ and $w(x)=o(|x|^{2})$ at $\infty$. Then we have,
\begin{equation}\label{dd220}
\Delta w(x)=-\frac{1}{2\pi^2}\int_{\mathbb{R}^{3}}\frac{u^{3}(y)}{|x-y|^{2}}\mathrm{d}y.
\end{equation}
\end{lem}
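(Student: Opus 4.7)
\bigskip
\noindent\textbf{Proof proposal for Lemma \ref{cor13}.}
The plan is to start from the representation proved in Lemma \ref{lem2}, namely
$$
 \Delta w(x)=-\frac{1}{2\pi^{2}}\int_{\mathbb{R}^{3}}\frac{u^{3}(y)}{|x-y|^{2}}\,\mathrm{d}y-C, \qquad C\geq 0,
$$
and show that the constant $C$ must vanish by testing against large balls, using the divergence theorem, and then exploiting the growth hypothesis $w(x)=o(|x|^{2})$. Integration rather than spherical averaging is the key move: it bypasses the need to control $k(x)$ pointwise.

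First, I would integrate the displayed equation over $B_{R}(0)$ and apply Fubini, which yields
$$
 \int_{B_R(0)}\Delta w(x)\,\mathrm{d}x=-\frac{1}{2\pi^{2}}\int_{\mathbb{R}^{3}}u^{3}(y)\,J(y,R)\,\mathrm{d}y-\frac{4\pi R^{3}}{3}C,
$$
where $J(y,R):=\int_{B_{R}(0)}|x-y|^{-2}\,\mathrm{d}x$. A short case split produces the uniform estimate $J(y,R)\leq 12\pi R$ when $|y|\leq 2R$ and $J(y,R)\leq \tfrac{16\pi R^{3}}{3|y|^{2}}$ when $|y|>2R$. Combining these with the finite total curvature condition $\int_{\mathbb{R}^{3}}u^{3}\,\mathrm{d}y=2\pi^{2}\alpha<+\infty$ (and the crude bound $\int_{|y|>2R}u^{3}/|y|^{2}\,\mathrm{d}y\leq(2R)^{-2}\cdot 2\pi^{2}\alpha$) gives
$$
 \int_{\mathbb{R}^{3}}u^{3}(y)\,J(y,R)\,\mathrm{d}y=O(R),\qquad R\to +\infty,
$$
and hence $\int_{B_{R}(0)}\Delta w\,\mathrm{d}x=O(R)-\tfrac{4\pi R^{3}}{3}C$.

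Next I would invoke the divergence theorem. Since $w\in C^{3,\epsilon}_{loc}(\mathbb{R}^{3})$, setting $\phi(R):=\bbint_{\partial B_{R}(0)}w\,\mathrm{d}\sigma$ one has
$$
 \phi'(R)=\bbint_{\partial B_R(0)}\partial_{r}w\,\mathrm{d}\sigma=\frac{1}{4\pi R^{2}}\int_{B_{R}(0)}\Delta w\,\mathrm{d}x=O(R^{-1})-\frac{RC}{3}.
$$
Integrating from some fixed $R_{0}>0$ to $R$ then yields
$$
 \phi(R)=-\frac{C}{6}R^{2}+O(\ln R)+O(1).
$$
On the other hand, the hypothesis $w(x)=o(|x|^{2})$ forces $|\phi(R)|\leq \sup_{|x|=R}|w(x)|=o(R^{2})$. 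Matching the two expressions at leading order gives $-C/6=o(1)$, hence $C=0$, which is exactly \eqref{dd220}.

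The only real technical point is the kernel estimate on $J(y,R)$; the rest is routine integration by parts. No new regularity input is required beyond what was already used in Lemma \ref{lem2}, and the finite total curvature $u^{3}\in L^{1}(\mathbb{R}^{3})$ is precisely what makes the $O(R)$ bound hold uniformly.
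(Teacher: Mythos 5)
Your proof is correct, and it genuinely differs in strategy from the paper's. The paper argues by contradiction via a barrier construction: assuming $C>0$, it builds the auxiliary function $n(y)=w(y)+\var|y|^{2}+B\bigl(|y|^{-1}-R_{0}^{-1}\bigr)$ with $\var<C/12$, uses $w(y)=o(|y|^{2})$ to make $n\to+\infty$ at infinity, tunes $B$ so that $\inf_{|y|\geq R_{0}}n$ is attained at an interior point $y_{0}$ with $|y_{0}|>R_{0}$, and then gets a contradiction from $\Delta n<-C/2<0$ (strict superharmonicity forbids an interior minimum). Your route instead integrates the identity from Lemma \ref{lem2} over $B_{R}(0)$, converts $\int_{B_{R}}\Delta w$ to a derivative of the spherical mean $\phi(R)=\bbint_{\partial B_{R}}w\,\mathrm{d}\sigma$ via the divergence theorem, and then reads off the quadratic growth rate $-C/6$ of $\phi(R)$ against the hypothesis $\phi(R)=o(R^{2})$. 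The essential technical ingredient on your side is the uniform kernel bound $J(y,R)=O(R)$ (after splitting $|y|\leq 2R$ and $|y|>2R$), which is clean and correct. Both arguments ultimately consume the same input (the sign $C\geq0$ from Lemma \ref{lem2}, the lower control $w(x)\geq -o(|x|^{2})$, and $u^{3}\in L^{1}$); the paper's barrier is a pointwise maximum-principle argument, while yours is an averaged/energy-type argument that avoids any auxiliary construction and makes the dependence on $w(x)=o(|x|^{2})$ quantitatively explicit through the estimate $\phi(R)=-\frac{C}{6}R^{2}+O(\ln R)+O(1)$.
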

\begin{proof}[{\sl Proof of Lemma~\ref{cor13}}]
First, from the Lemma \ref{lem2}, we have
\begin{equation*}
\Delta w(x)=-\frac{1}{2\pi^2}\int_{\mathbb{R}^{3}}\frac{u^{3}(y)}{|x-y|^{2}}\mathrm{d}y-C.
\end{equation*}
If $C=0$, we directly get \eqref{dd220}. If $C>0$, take $\var\in(0,\frac{C}{12})$ and $R_{0}$ large enough such that
\begin{align*}
\Delta w(x)\leq-C<0 \,\,\,\, \text{in} \ \R^3 \qquad  \text{and} \qquad
w(y)+\var|y|^2\geq\frac{\var}{2}|y|^2\geq0 \,\,\,\, \text{for}\  |y|\geq R_0.
\end{align*}
Define
$$n(y)=w(y)+\var|y|^2+B(|y|^{-1}-R_0^{-1}).$$
Thus
\begin{align}\label{ddz221}
\Delta n(y)=\Delta w(y)+6\var<-\frac{C}{2}<0
\end{align}
for $|y|\geq R_0$. Furthermore, we get
\begin{align*}
&\lim_{|y|\to\infty}n(y)=+\infty \quad\text{for any}\ B>0,\\
&\lim_{B\to+\infty}n(y)=-\infty \quad\text{for any}\ y\in\R^3\backslash\overline{B_{R_0}(0)}.
\end{align*}
Thus, we can take $B$ sufficiently large such that $\inf\limits_{|y|\geq R_0}n(y)$ is obtained by some $y_0\in\R^3$ with $|y_0|>R_0$. By the $maximum\ principle$ to \eqref{ddz221}, we derive a contradiction which implies $C=0$. This finishes the proof of Lemma \ref{cor13}.
\end{proof}

From Lemma \ref{cor13}, we get the following integral representation formula and asymptotic property for $w$.
\begin{lem} \label{le3}
Assume $\int_{\mathbb{R}^{3}}u^{3}(x)\mathrm{d}x<+\infty$, $u=O(|x|^{K})$ at $\infty$ for some $K\gg1$ arbitrarily large and $w(x)=o(|x|^{2})$ at $\infty$. Then we have
\begin{equation}\label{z4}
w(x)=\frac{1}{2\pi^2}\int_{\mathbb{R}^{3}}\ln\left(\frac{|y|}{|x-y|}\right)u^{3}(y)\mathrm{d}y+C_0,
\end{equation}
where $C_0\in \mathbb{R}$ is a constant. Moreover,
\begin{equation}\label{d4}
\lim_{|x|\rightarrow+\infty}\frac{w(x)}{\ln|x|}=-\alpha.
\end{equation}
\end{lem}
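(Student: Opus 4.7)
The plan is to show that $w$ differs from the function $k$ defined in \eqref{dd1} by a constant, using that both satisfy the same Poisson equation. Setting $h(x):=w(x)-k(x)$ and combining the representation $-\Delta k(x)=\frac{1}{2\pi^{2}}\int_{\mathbb{R}^{3}}\frac{u^{3}(y)}{|x-y|^{2}}\mathrm{d}y$ from \eqref{214} with the formula for $\Delta w$ in Lemma \ref{cor13}, we immediately get $\Delta h\equiv 0$ in $\mathbb{R}^{3}$, so $h$ is harmonic.

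The key preparatory step is to establish the sharp asymptotic
\[
\lim_{|x|\to\infty}\frac{k(x)}{\ln|x|}=-\alpha.
\]
The upper bound $-k(x)\leq\alpha\ln|x|+C$ is Lemma \ref{lem1}. For the matching lower bound one must show $\int_{\mathbb{R}^{3}}\ln\frac{|x||y|}{|x-y|}u^{3}(y)\,\mathrm{d}y=o(\ln|x|)$, which I would do by splitting into the regions $\{|y|\leq|x|/2\}$, $\{|x-y|\leq 1\}$, and the intermediate set $\{|y|>|x|/2,\,|x-y|>1\}$: the first and third are handled by $u^{3}\in L^{1}(\mathbb{R}^{3})$ together with dominated convergence, while the logarithmic singularity on the unit ball around $x$ requires the pointwise bound $u(x)=O(|x|^{K})$ combined with the $\exp^{L}+L\ln L$ inequality from \cite{DQ2}. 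I expect this to be the most delicate part of the argument, since the finite total curvature alone would not suffice to control the singular integral.

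Once $k(x)=O(\ln|x|)$ is in hand, the hypothesis $w(x)=o(|x|^{2})$ yields $h(x)=o(|x|^{2})$, and a harmonic function on $\mathbb{R}^{3}$ with sub-quadratic growth must be a polynomial of degree at most one, so $h(x)=C_{0}+b\cdot x$ for some $C_{0}\in\mathbb{R}$ and $b\in\mathbb{R}^{3}$. To rule out the linear term I invoke Lemma \ref{le2}, which gives $|x|^{-1}e^{pw}\in L^{1}(\mathbb{R}^{3})$: combining this with the sharp asymptotic of $k$, one has $e^{pw(x)}\sim C|x|^{-p\alpha}e^{pb\cdot x}$ at infinity, and if $b\neq 0$ this grows exponentially along the ray $x=tb$ as $t\to+\infty$, contradicting the integrability. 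This step is precisely the Liouville-type content of Lemma \ref{lemma214}. Therefore $b=0$ and $h\equiv C_{0}$, which gives the representation \eqref{z4}, and the asymptotic $w(x)/\ln|x|\to-\alpha$ in \eqref{d4} follows immediately from the one just established for $k$.
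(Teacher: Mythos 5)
Your proposal is correct and follows essentially the same route as the paper: establish $\Delta(w-k)=0$, prove the sharp asymptotic $k(x)/\ln|x|\to-\alpha$ via the $\exp^{L}+L\ln L$ inequality together with the growth bound $u=O(|x|^K)$, apply the Liouville-type Lemma \ref{lemma214}(ii) to conclude $w-k$ is an affine function, and then rule out the linear part by the integrability $|x|^{-1}e^{pw}\in L^{1}(\mathbb{R}^{3})$ from Lemma \ref{le2}. The only minor imprecision is attributing the exponential-growth contradiction to Lemma \ref{lemma214}, which is actually the lemma that yields the affine form; the contradiction itself is a direct integrability argument, just as in the paper.
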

\begin{proof}[{\sl Proof of Lemma~\ref{le3}}]
We will first prove the following asymptotic property:
\begin{equation}\label{d5}
\lim_{|x|\rightarrow+\infty}\frac{k(x)}{\ln|x|}=-\alpha.
\end{equation}
To this end, we only need to show that
\begin{equation*}
\lim_{|x|\rightarrow+\infty}\int_{\mathbb{R}^{3}}\frac{\ln|x-y|-\ln|y|-\ln|x|}{\ln|x|}u^{3}(y)\mathrm{d}y=0.
\end{equation*}

Now we need the $\exp^{L}+L\ln L$ inequality established by Dai and Qin in Lemma 2.6 of \cite{DQ2}.
\begin{lem}[\textbf{\cite{DQ2}}]\label{lem212}
Suppose $n\geq1$ and $\Omega\subseteq\R^n$ is a bounded or unbounded domain. Assume $f\in\exp^L(\Omega)$ and $g\in L\ln L(\Omega)$, then we have $fg\in L^1(\Omega)$ and
\begin{align*}
\int_{\Omega}|f(x)g(x)|\mathrm{d}x&\leq\int_{\Omega}(e^{|f(x)|}-|f(x)|-1)\mathrm{d}x+\int_{\Omega}|g(x)|\ln(|g(x)|+1)\mathrm{d}x\nonumber\\
&=:\|f\|_{exp^L(\Omega)}+\|g\|_{L\ln L(\Omega)},
\end{align*}
where the space $\exp^L(\Omega):=\{f|f:\Omega\to\C \ \text{measurable} ,\ \int_\Omega(e^{|f(x)|}-|f(x)|-1)\mathrm{d}x<+\infty\}$ and $L\ln L(\Omega):=\{g|g:\Omega\to\C\ \text{measurable}, \ \int_{\Omega}|g(x)|\ln(|g(x)|+1)\mathrm{d}x<+\infty\}.$
\end{lem}
From Lemma \ref{lem212}, we get
\begin{equation}\label{d7}
\begin{aligned}
\int_{B_{1}(x)}\ln\left(\frac{1}{|x-y|}\right)u^{3}(y)\mathrm{d}y&\leq\int_{B_{1}(x)}\frac{1}{|x-y|}\mathrm{d}y+\int_{B_{1}(x)}u^{3}(y)\ln(u^{3}(y)+1)\mathrm{d}y\\
&\leq2\pi+\left(\max_{|x-y|\leq1}\ln(u^{3}(y)+1)\right)\int_{B_{1}(x)}u^{3}(y)\mathrm{d}y.
\end{aligned}
\end{equation}
Thus, from \eqref{d7} and the conditions $\int_{\mathbb{R}^{3}}u^{3}(x)\mathrm{d}x<+\infty$, $u=O(|x|^{K})$ at $\infty$ for some $K\gg1$ arbitrarily large, we obtain, for any $|x|\geq e^{2}$ large enough,
\begin{equation}\label{d8}
\begin{aligned}
&\quad\Big|\int_{\mathbb{R}^{3}}\frac{\ln|x-y|-\ln|y|-\ln|x|}{\ln|x|}u^{3}(y)\mathrm{d}y\Big|\\
\leq&3\int_{B_{1}(x)}u^{3}(y)\mathrm{d}y+\frac{2\pi}{\ln|x|}+
\frac{O(3K\ln|x|)}{\ln|x|}\int_{B_{1}(x)}u^{3}(y)\mathrm{d}y\\
&+\displaystyle\frac{\max_{|y|\leq\ln|x|}|\ln\frac{|x-y|}{|x|}|}{\ln|x|}
\int_{|y|\leq\ln|x|}u^{3}(y)\mathrm{d}y+\frac{1}{\ln|x|}\int_{|y|<\ln|x|}|\ln|y||u^{3}(y)\mathrm{d}y\\
&+\sup_{|x-y|\geq1,|y|\geq\ln|x|}\frac{\ln|x-y|-\ln|y|-\ln|x|}{\ln|x|}\int_{|y|
\geq\ln|x|}u^{3}(y)\mathrm{d}y\\
\leq&o_{|x|}(1)+\frac{2\pi}{\ln|x|}
+\frac{1}{\ln|x|}\int_{|y|<1}\ln\left(\frac{1}{|y|}\right)u^{3}(y)\mathrm{d}y\\
&\quad+\frac{\ln(\ln|x|)}{\ln|x|}\int_{R^{3}}u^{3}(y)\mathrm{d}y
+\left(2+\frac{\ln2}{\ln|x|}\right)\int_{|y|\geq\ln|x|}u^{3}(y)\mathrm{d}y=o_{|x|}(1).
\end{aligned}
\end{equation}
The last inequality is due to $\frac{1}{2|x|^2}\leq\frac{|x-y|}{|x||y|}\leq\frac{1}{|x|}+\frac{1}{|y|}<1$ for any $|y-x|\geq1$ and $|y|\geq\ln|x|$. Letting $|x|\rightarrow+\infty$ in \eqref{d8}, we have
$$\lim_{|x|\rightarrow+\infty}\int_{\mathbb{R}^{3}}\frac{\ln|x-y|-\ln|y|-\ln|x|}{\ln|x|}u^{3}(y)\mathrm{d}y=0.$$
This proves the asymptotic property \eqref{d5}.

Now, we start to show \eqref{z4}. We need the following Corollary 2.10 in \cite{DQ2}, which is a direct consequence of Lemma 3.3 in \cite{Lin}.
\begin{lem}[Corollary 2.10 in \cite{DQ2}]\label{lemma214}
Assume $n\geq2$. Suppose that $w$ is a harmonic function in $\R^N$. Then we have\\
(i)If $w^+=O(|x|^2)$ at $\infty$, then $w$ is a polynomial of degree at most 2.\\
(ii)If $w^+=o(|x|^2)$ at $\infty$, then $w$ is a polynomial of degree at most 1.\\
(iii)If $w^+=o(|x|)$ at $\infty$, then $w\equiv C$ in $\R^N$ for some constant $C$.
\end{lem}
By the formula \eqref{214} and \eqref{dd220}, we obtain $\Delta(w(x)-k(x))=0$. From the asymptotic property \eqref{d5} and $w(x)=o(|x|^{2})$ at $\infty$, we can immediately derive by Lemma \ref{lemma214} (ii) that, for some constants $a_i\in \mathbb{R}$ ($i=0,1,2,3$),
\begin{equation*}
w(x)-k(x)=\sum_{i=1}^{3}a_ix_i+a_0,\qquad \forall\ x\in\mathbb{R}^{3}.
\end{equation*}
Since \eqref{29} implies $|x|^{-1}e^{pw(x)}=|x|^{-1}e^{pk(x)}e^{pa_0}e^{p\sum_{i=1}^{3}a_ix_i}\in L^1(\R^3)$, we infer from the asymptotic property \eqref{d5} that $a_1=a_2=a_3=0$. Hence the integral representation formula \eqref{z4} for $w$ holds. The asymptotic property \eqref{d4} of $w$ follows immediately from \eqref{z4} and \eqref{d5}. This completes our proof of Lemma \ref{le3}.
\end{proof}

As a consequence of Lemma \ref{le3}, we have the following Corollary.
\begin{cor} \label{le4}
Assume $\int_{\mathbb{R}^{3}}u^{3}(x)\mathrm{d}x<+\infty$ and $u=O(|x|^{K})$ at $\infty$ for some $K\gg1$ arbitrarily large and $w(x)=o(|x|^{2})$ at $\infty$. Then we have, for arbitrarily small $\delta>0$,
\begin{equation}\label{e1}
\lim_{|x|\rightarrow+\infty}\frac{e^{pw(x)}}{|x|^{-\alpha p-\delta}}=+\infty,\
\lim_{|x|\rightarrow+\infty}\frac{e^{pw(x)}}{|x|^{-\alpha p+\delta}}=0.
\end{equation}
Consequently, $\alpha=\frac{1}{2\pi^2}\int_{\mathbb{R}^{3}}u^{3}(x)\mathrm{d}x\geq\frac{2}{p}$. Furthermore, if $\alpha>\frac{3}{p}$, then
\begin{equation}\label{z6}
\gamma:=\frac{1}{2\pi}\int_{\mathbb{R}^{3}}e^{pw(x)}\mathrm{d}x<+\infty
\end{equation}
and
\begin{equation}\label{e3}
\lim_{|x|\rightarrow+\infty}|x|v(x)=\gamma,
\end{equation}
in addition,
\begin{equation}\label{234eq1}
\beta:=\frac{1}{2\pi^2}\int_{\mathbb{R}^{3}}v^{4}(x)\mathrm{d}x<+\infty\end{equation}
and
\begin{equation}\label{e4}
\lim_{|x|\rightarrow+\infty}|x|^{2}u(x)=\beta.
\end{equation}
\end{cor}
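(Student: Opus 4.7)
The plan is to assemble consequences of the asymptotic identity $\lim_{|x|\to\infty}w(x)/\ln|x|=-\alpha$ from Lemma~\ref{le3} together with the integral representation formulae of Lemmas~\ref{le1} and~\ref{le2}. First I would derive~\eqref{e1} directly: Lemma~\ref{le3} yields $w(x)=(-\alpha+o(1))\ln|x|$ as $|x|\to\infty$; for any fixed $\delta>0$, exponentiating after multiplying by $p$ sandwiches $e^{pw(x)}$ between $|x|^{-\alpha p-\delta}$ and $|x|^{-\alpha p+\delta}$ on the exterior of a large ball, which is exactly~\eqref{e1}.

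Next, to show $\alpha\ge 2/p$, I would combine the lower half of~\eqref{e1} with the integrability $\int_{\R^{3}}|x|^{-1}e^{pw(x)}dx<+\infty$ from Lemma~\ref{le2}: the resulting requirement $\int_{|x|\ge R}|x|^{-\alpha p-\delta-1}dx<+\infty$ in $\R^{3}$ holds iff $\alpha p+\delta>2$, and sending $\delta\downarrow 0$ gives $\alpha\ge 2/p$. For the $\alpha>3/p$ branch I would select $\delta>0$ so small that $\alpha p-\delta>3$, whereupon the upper half of~\eqref{e1} produces $e^{pw(x)}\le|x|^{-\alpha p+\delta}$ outside a large ball, yielding an integrable tail in $\R^{3}$, so $\gamma<+\infty$. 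To establish $\lim_{|x|\to\infty}|x|v(x)=\gamma$, I would start from~\eqref{lll1} and write
\[
|x|v(x)=\frac{1}{4\pi}\int_{\R^{3}}\frac{|x|}{|x-y|}\,e^{pw(y)}\,dy,
\]
then split the domain into $\{|y|\le|x|/2\}$, $\{|y-x|\le|x|/2\}$, and their joint complement. On the first region, $|x|/|x-y|\le 2$ and converges pointwise to $1$, so dominated convergence delivers a term proportional to $\int e^{pw}$; on the second, $|y|\ge|x|/2$ forces $e^{pw(y)}\le C|x|^{-\alpha p+\delta}$ via~\eqref{e1}, and the volume estimate $\int_{|z|\le|x|/2}|z|^{-1}dz=O(|x|^{2})$ yields a contribution of order $|x|^{3-\alpha p+\delta}\to 0$; on the third, $|x|/|x-y|\le 2$ and $\int_{|y|\ge|x|/2}e^{pw}\to 0$ as the tail of an $L^{1}$ function.

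Finally, for $\beta$ and the asymptotics of $u$: the previous step will have shown $v(x)=O(|x|^{-1})$ at infinity, so $v^{4}$ is integrable at infinity, and combined with local continuity of $v$ this delivers $\beta<+\infty$. Starting from $u(x)=\frac{1}{2\pi^{2}}\int\frac{v^{4}(y)}{|x-y|^{2}}dy$ in Lemma~\ref{le1}, I would run an analogous three-region decomposition of $|x|^{2}u(x)=\frac{1}{2\pi^{2}}\int\frac{|x|^{2}}{|x-y|^{2}}v^{4}(y)dy$, with the near-singularity contribution now bounded by $C|x|^{-4}\cdot|x|^{2}\cdot|x|\to 0$ using $v^{4}(y)\le C|y|^{-4}\le C'|x|^{-4}$ on $\{|y-x|\le|x|/2\}$ together with $\int_{|z|\le|x|/2}|z|^{-2}dz=O(|x|)$.

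The hard part will be the ``near-singularity'' region $\{|y-x|\le|x|/2\}$ in both limit computations, where the kernel weight $|x|/|x-y|$ or $|x|^{2}/|x-y|^{2}$ is not uniformly bounded and dominated convergence does not apply directly. Handling it requires coupling the pointwise decay from~\eqref{e1} (or from the preceding step in the case of $v^{4}$) with a sharp volume estimate for the singular kernel; this is precisely where the margin $\alpha p-\delta>3$ provided by the hypothesis $\alpha>3/p$ is consumed.
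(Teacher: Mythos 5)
Your proof is correct and follows the same underlying strategy as the paper: derive \eqref{e1} from the asymptotic $w(x)=(-\alpha+o(1))\ln|x|$ of Lemma \ref{le3}, read off the integrability threshold $\alpha\geq 2/p$ from \eqref{29}, and establish \eqref{e3}, \eqref{e4} by splitting the representation integrals into a near-singularity piece, a far-field piece, and a main piece. The only (cosmetic) difference is that the paper subtracts $\gamma$ first and writes $|x|v(x)-\gamma=\frac{1}{4\pi}\int\frac{|x|-|x-y|}{|x-y|}e^{pw}\,dy$, then shows this vanishes using a four-region split (including separate treatment of $|y|<R_0$ and $R_0\leq|y|<|x|/2$), whereas you keep $|x|v(x)$ intact, use a three-region split, and apply dominated convergence on the $\{|y|\leq|x|/2\}$ piece to produce the limit $\frac{1}{4\pi}\int e^{pw}$; both rest on the same pointwise decay from \eqref{e1} and the same kernel-volume estimates, and your near-singularity estimate $O(|x|^{3-\alpha p+\delta})$ correctly consumes the margin $\alpha p>3$. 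One minor remark: your argument (correctly) yields the limit $\frac{1}{4\pi}\int e^{pw}$, which shows that the normalization $\gamma:=\frac{1}{2\pi}\int e^{pw}$ in the statement of the corollary is a typo for $\frac{1}{4\pi}\int e^{pw}$, consistent with what the paper's own reduction to $\int\frac{|x|-|x-y|}{|x-y|}e^{pw}\to0$ actually proves.
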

\begin{proof}[{\sl Proof of Corollary~\ref{le4}}]
The asymptotic property \eqref{d4} implies that
$$w(x)=-\alpha\ln|x|+o(\ln|x|),\quad \text{as} \ |x|\rightarrow+\infty,$$
where $\alpha$ is defined in \eqref{zz7}. Therefore, we obtain
$$e^{pw(x)}=|x|^{-\alpha p}e^{o(\ln|x|)},\quad \text{as} \ |x|\rightarrow+\infty.$$
Therefore, for arbitrarily small $\delta>0$,
$$\lim_{|x|\rightarrow+\infty}\frac{e^{pw(x)}}{|x|^{-\alpha p-\delta}}=+\infty\quad \text{and}\quad
\lim_{|x|\rightarrow+\infty}\frac{e^{pw(x)}}{|x|^{-\alpha p+\delta}}=0.$$

From \eqref{29} and \eqref{e1}, one can easily infer that $\alpha \geq\frac{2}{p}$. Furthermore, if we assume $\alpha >\frac{3}{p}$, from the asymptotic property \eqref{e1}, it follows immediately that
$\gamma:=\frac{1}{2\pi}\int_{\mathbb{R}^{3}}e^{pw(x)}\mathrm{d}x<+\infty$.

Next, we prove the asymptotic property \eqref{e3} of $v$. In \eqref{e1}, let $\delta=\frac{\alpha p-3}{2}$, then there exists a $R_0\geq1$ sufficiently large such that
\begin{equation}\label{234eq}
e^{pw(x)}\leq|x|^{-\frac{\alpha p+3}{2}},\qquad\forall \,\,|x|\geq R_0.
\end{equation}
From the integral representation formula \eqref{lll1} of $v$, we only need to show
\begin{equation*}
\lim_{|x|\rightarrow+\infty}\int_{\R^3}\frac{|x|-|x-y|}{|x-y|}e^{pw(y)}\mathrm{d}y=0.
\end{equation*}
Indeed, from \eqref{z6} and \eqref{234eq}, for any $|x|\geq R_0$, we have
\begin{align*}
&\quad\left|\int_{\R^3}\frac{|x|-|x-y|}{|x-y|}e^{pw(y)}\mathrm{d}y\right|\\
&\leq\int_{|y-x|<\frac{|x|}{2}}\frac{1}{|x-y||y|^{\frac{\alpha p+1}{2}}}\mathrm{d}y
+3\int_{\{|y-x|\geq\frac{|x|}{2}\}\cap\{|y|\geq\frac{|x|}{2}\}}
e^{pw(y)}\mathrm{d}y\\
&\quad+\frac{2}{|x|}\int_{|y|<R_0}|y|e^{pw(y)}\mathrm{d}y
+\frac{2}{|x|}\int_{R_0\leq |y|<\frac{|x|}{2}}\frac{1}{|y|^{\frac{\alpha p+1}{2}}}\mathrm{d}y\\
&\leq\frac{2^{\frac{\alpha p+3}{2}}}{(\alpha p+1)|x|^{\frac{\alpha p-3}{2}}}+o_{|x|}(1)+\frac{2}{|x|}\int_{|y|<R_0}|y|e^{pw(y)}\mathrm{d}y+\frac{8\pi}{|x|}
\xi(x)=o_{|x|}(1),
\end{align*}
where $\xi(x)=\frac{2}{5-\alpha p}\left(\frac{|x|}{2}\right)^{\frac{5-\alpha p}{2}}$ if $3<\alpha p<5$, $\xi(x)=\ln\left(\frac{|x|}{2}\right)$ if $\alpha p=5$, and $\xi(x)=\frac{2}{\alpha p-5}R_0^{\frac{\alpha p-5}{2}}$ if $\alpha p>5$. Thus we obtain the asymptotic property \eqref{e3} for $v$. From \eqref{e3}, we get
\begin{equation}\label{237eq}
v(x)\leq\frac{C}{|x|},\qquad\forall \,\, |x|\geq R_0.
\end{equation}
Thus we have $\beta:=\frac{1}{2\pi^2}\int_{\mathbb{R}^{3}}v^{4}(x)\mathrm{d}x<+\infty$.

Finally, we derive the asymptotic property \eqref{e4} of $u$. By \eqref{l1}, we have
\begin{equation}\label{e5}
|x|^{2}u(x)-\beta=\frac{1}{2\pi^2}\int_{\mathbb{R}^{3}}\frac{|x|^{2}-|x-y|^{2}}{|x-y|^{2}}v^{4}(y)\mathrm{d}y.
\end{equation}
Therefore, we only need to show that the right hand side of \eqref{e5} goes to $0$ as $|x|\rightarrow+\infty$.
According to \eqref{234eq1} and \eqref{237eq}, for any $|x|\geq R_0$, we obtain
\begin{align*}
&\quad\left|\int_{\R^3}\frac{|x|^{2}-|x-y|^{2}}{|x-y|^{2}}v^{4}(y)\mathrm{d}y\right|\\
&\leq\int_{|y-x|<\frac{|x|}{2}}\frac{1}{|x-y|^2|y|^{2}}\mathrm{d}y
+5\int_{\{|y-x|\geq\frac{|x|}{2}\}\cap\{|y|\geq\frac{|x|}{2}\}}
v^{4}(y)\mathrm{d}y\nonumber\\
&\quad+\frac{4}{|x|^2}\int_{|y|<R_0}|y|^2v^{4}(y)\mathrm{d}y
+\frac{4}{|x|^2}\int_{R_0\leq |y|<\frac{|x|}{2}}\frac{1}{|y|^{2}}\mathrm{d}y\nonumber\\
&\leq\frac{8\pi}{3|x|}+o_{|x|}(1)+\frac{4}{|x|^2}\int_{|y|<R_0}|y|^2v^{4}(y)\mathrm{d}y
+\frac{8\pi}{|x|^3}=o_{|x|}(1).\nonumber
\end{align*}
This proves the asymptotic property \eqref{e4} and hence concludes our proof of Corollary \ref{le4}.
\end{proof}

\medskip

From Lemmas \ref{le1}, \ref{le2} and \ref{le3}, we have proved that the classical solution $(u,v,w)$ of the $3$-D system \eqref{a1} solve the following integral system:
\begin{equation}\label{a2-3d}
\begin{cases}\displaystyle
\ u(x)=\frac{1}{2\pi^2}\int_{\mathbb{R}^{3}}\frac{v^{4}(y)}{|x-y|^{2}}\mathrm{d}y,\\
\displaystyle \ v(x)=\frac{1}{4\pi}\int_{\mathbb{R}^{3}}\frac{e^{pw(y)}}{|x-y|}\mathrm{d}y,\\
\displaystyle \ w(x)=\frac{1}{2\pi^2}\int_{\mathbb{R}^{3}}\ln\left(\frac{|y|}{|x-y|}\right)u^{3}(y)\mathrm{d}y+C_0,
\end{cases}
\end{equation}
where $C_0\in \R$. Next, we will apply the method of moving spheres to show that $\alpha=\frac{5}{p}$ and derive the classification of $(u,v,w)$.

For this purpose, for arbitrarily given $x_{0}\in \mathbb{R}^{3}$ and any $\lambda>0$, we define the Kelvin transformation of $(u,v,w)$ centered at $x_{0}$ by
\begin{equation}\label{r1}
\begin{cases}
\displaystyle u_{x_{0},\lambda}(x)=\frac{\lambda^{2}}{|x-x_{0}|^{2}}u(x^{x_{0},\lambda}),\\[3mm] \displaystyle v_{x_{0},\lambda}(x)=\frac{\lambda}{|x-x_{0}|}v(x^{x_{0},\lambda}),\\[3mm] \displaystyle w_{x_{0},\lambda}(x)=w(x^{x_{0},\lambda})+\frac{5}{p}\ln\frac{\lambda}{|x-x_{0}|},
\end{cases}
\end{equation}
for arbitrary $x\in \mathbb{R}^{3}\backslash\{x_{0}\}$, where $x^{x_{0},\lambda}=\frac{\lambda^{2}(x-x_{0})}{|x-x_{0}|^{2}}+x_{0}$.

Define
$$Q^{u}_{x_0,\la}(x)=u_{x_0,\lambda}(x)-u(x),\ Q^{v}_{x_0,\la}(x)=v_{x_0,\lambda}(x)-v(x) \ \text{and}\ Q^{w}_{x_0,\la}(x)=w_{x_0,\lambda}(x)-w(x),$$
where $\lambda>0$ is an arbitrary positive real number. Denote the  moving the sphere
$$ S_{\lambda}(x_0):=\{x\in \mathbb{R}^{3}\mid |x-x_0|=\lambda\}.$$

Since $(u,v,w)$ solve the integral system \eqref{a2-3d}, through direct calculations, we obtain, for any $\lambda>0$ and all $x\in \mathbb{R}^{3}$,
\begin{align}
u(x)&=\frac{1}{2\pi^2}\int_{\mathbb{R}^{3}}\frac{v^{4}(y)}{|x-y|^{2}}\mathrm{d}y\nonumber\\
&=\frac{1}{2\pi^2}\int_{B_{\lambda}(x_0)}\frac{v^{4}(y)}{|x-y|^{2}}\mathrm{d}y
+\frac{1}{2\pi^2}\int_{B_{\lambda}(x_0)}\frac{v^{4}(y^{x_0,\lambda})}
{|x-y^{x_0,\lambda}|^{2}}\left(\frac{\lambda}{|y-x_0|}\right)^{6}\mathrm{d}y\nonumber\\
&=\frac{1}{2\pi^2}\int_{B_{\lambda}(x_0)}\frac{v^{4}(y)}{|x-y|^{2}}\mathrm{d}y
+\frac{1}{2\pi^2}\int_{B_{\lambda}(x_0)}\frac{v_{x_0,\lambda}^{4}(y)}
{\left|\frac{\lambda(x-x_0)}{|x-x_0|}-\frac{|x-x_0|(y-x_0)}{\lambda}\right|^{2}}\mathrm{d}y\label{a4},
\end{align}
\begin{align}
v(x)&=\frac{1}{4\pi}\int_{\mathbb{R}^{3}}\frac{e^{pw(y)}}{|x-y|}\mathrm{d}y\nonumber\\
&=\frac{1}{4\pi}\int_{B_{\lambda}(x_0)}\frac{e^{pw(y)}}{|x-y|}\mathrm{d}y
+\frac{1}{4\pi}\int_{B_{\lambda}(x_0)}\frac{e^{pw(y^{x_0,\lambda})}}{|x-y^{x_0,\lambda}|}\left(\frac{\lambda}{|y-x_0|}\right)^{6}\mathrm{d}y\nonumber\\
&=\frac{1}{4\pi}\int_{B_{\lambda}(x_0)}\frac{e^{pw(y)}}{|x-y|}\mathrm{d}y
+\frac{1}{4\pi}\int_{B_{\lambda}(x_0)}\frac{e^{pw_{x_0,\lambda}(y)}}{\left|\frac{\lambda(x-x_0)}{|x-x_0|}-\frac{|x-x_0|(y-x_0)}{\lambda}\right|}\mathrm{d}y\label{a5}
\end{align}
and
\begin{align}
w(x)&=\frac{1}{2\pi^2}\int_{\mathbb{R}^{3}}\ln\left(\frac{|y|}{|x-y|}\right)u^{3}(y)\mathrm{d}y+C_0\nonumber\\
&=\frac{1}{2\pi^2}\int_{B_{\lambda}(x_0)}\ln\left(\frac{|y|}{|x-y|}\right)u^{3}(y)\mathrm{d}y
\nonumber\\
&\quad+\frac{1}{2\pi^2}\int_{B_{\lambda}(x_0)}\ln\left(\frac{|y^{x_0,\lambda}|}{|x-y^{x_0,\lambda}|}\right)u^{3}(y^{x_0,\lambda})\left(\frac{\lambda}{|y-x_0|}\right)^{6}\mathrm{d}y+C_0\nonumber\\
&=\frac{1}{2\pi^2}\int_{B_{\lambda}(x_0)}\ln\left(\frac{|y|}{|x-y|}\right)u^{3}(y)\mathrm{d}y
+\frac{1}{2\pi^2}\int_{B_{\lambda}(x_0)}\ln\left(\frac{|y^{x_0,\lambda}|}{|x-y^{x_0,\lambda}|}\right)u_{x_0,\lambda}^{3}(y)\mathrm{d}y+C_0\label{a6}.
\end{align}
Furthermore, we deduce from \eqref{a4}, \eqref{a5} and \eqref{a6} that, for any $\lambda>0$ and all $x\in \mathbb{R}^{3}\backslash\{x_0\}$,
\begin{align*}
u_{x_0,\lambda}(x)&=\frac{1}{2\pi^2}\left(\frac{\lambda}{|x-x_0|}\right)^{2}
\int_{\mathbb{R}^{3}}\frac{v^{4}(y)}{|x^{x_0,\lambda}-y|^{2}}\mathrm{d}y\nonumber\\
&=\frac{1}{2\pi^2}\left(\frac{\lambda}{|x-x_0|}\right)^{2}\left(\int_{B_{\lambda}(x_0)}\frac{v^{4}(y)}{|x^{x_0,\lambda}-y|^{2}}\mathrm{d}y
+\int_{B_{\lambda}(x_0)}\frac{v^{4}(y^{x_0,\lambda})}{|x^{x_0,\lambda}-y^{x_0,\lambda}|^{2}}\left(\frac{\lambda}{|y-x_0|}\right)^{6}\mathrm{d}y\right)\nonumber\\
&=\frac{1}{2\pi^2}\int_{B_{\lambda}(x_0)}\frac{v^{4}(y)}{\left|\frac{\lambda(x-x_0)}{|x-x_0|}-\frac{|x-x_0|(y-x_0)}{\lambda}\right|^{2}}\mathrm{d}y
+\frac{1}{2\pi^2}\int_{B_{\lambda}(x_0)}\frac{v_{x_0,\lambda}^{4}(y)}{|x-y|^{2}}\mathrm{d}y,
\end{align*}
\begin{align*}
v_{x_0,\lambda}(x)&=\frac{1}{4\pi}\frac{\lambda}{|x-x_0|}\int_{\mathbb{R}^{3}}\frac{e^{pw(y)}}{|x^{x_0,\lambda}-y|}\mathrm{d}y\nonumber\\
&=\frac{1}{4\pi}\frac{\lambda}{|x-x_0|}\left(\int_{B_{\lambda}(x_0)}\frac{e^{pw(y)}}{|x^{x_0,\lambda}-y|}\mathrm{d}y
+\int_{B_{\lambda}(x_0)}\frac{e^{pw(y^{x_0,\lambda})}}{|x^{x_0,\lambda}-y^{x_0,\lambda}|}\left(\frac{\lambda}{|y-x_0|}\right)^{6}\mathrm{d}y\right)\nonumber\\
&=\frac{1}{4\pi}\int_{B_{\lambda}(x_0)}\frac{e^{pw(y)}}{\left|
\frac{\lambda(x-x_0)}{|x-x_0|}-\frac{|x-x_0|(y-x_0)}{\lambda}\right|}\mathrm{d}y+\frac{1}{4\pi}
\int_{B_{\lambda}(x_0)}\frac{e^{pw_{x_0,\lambda}(y)}}{|x-y|}\mathrm{d}y,
\end{align*}
\begin{align}
w_{x_0,\lambda}(x)&=\frac{1}{2\pi^2}\int_{B_{\lambda}(x_0)}
\ln\left(\frac{|y|}{|x^{x_0,\lambda}-y|}\right)u^{3}(y)\mathrm{d}y
+\frac{1}{2\pi^2}\int_{B_{\lambda}(x_0)}
\ln\left(\frac{|y^{x_0,\lambda}|}{|x^{x_0,\lambda}-y^{x_0,\lambda}|}\right)u_{x_0,\lambda}^{3}(y)\mathrm{d}y\nonumber\\
&\quad+\frac{5}{p}\ln\frac{\lambda}{|x-x_0|}+C_0\label{a9}.
\end{align}

Consequently, follows from \eqref{a4}--\eqref{a9} that, for any $x\in B_{\lambda}(x_0)\backslash\{x_0\}$, we obtain
\begin{equation}\label{a10}
\begin{aligned}
Q^{u}_{x_0,\la}(x)
&=\frac{1}{2\pi^2}\int_{B_{\lambda}(x_0)}\left(
\frac{1}{|x-y|^{2}}-
\frac{1}{|\frac{\lambda(x-x_0)}{|x-x_0|}-\frac{|x-x_0|(y-x_0)}{\lambda}|^{2}}\right)(v_{x_0,\lambda}^{4}(y)-v^{4}(y))\mathrm{d}y,
\end{aligned}
\end{equation}
\begin{equation}\label{a11}
\begin{aligned}
Q^{v}_{x_0,\la}(x)
&=\frac{1}{4\pi}\int_{B_{\lambda}(x_0)}
\left(\frac{1}{|x-y|}-\frac{1}{|\frac{\lambda(x-x_0)}{|x-x_0|}-\frac{|x-x_0|(y-x_0)}{\lambda}|}\right)
(e^{pw_{x_0,\lambda}(y)}-e^{pw(y)})\mathrm{d}y,
\end{aligned}
\end{equation}
\begin{equation}\label{a12}
\begin{aligned}
Q^{w}_{x_0,\la}(x)
&=\frac{1}{2\pi^2}\int_{B_{\lambda}(x_0)}\ln\left(\frac{|\frac{\lambda(x-x_0)}{|x-x_0|}-\frac{|x-x_0|(y-x_0)}{\lambda}|}{|x-y|}\right)(u_{x_0,\lambda}^{3}(y)-u^{3}(y))\mathrm{d}y
\nonumber\\
&\quad+\left(\frac{5}{p}-\alpha\right)\ln\frac{\lambda}{|x-x_0|},
\end{aligned}
\end{equation}
where $\alpha:=\frac{1}{2\pi^2}\int_{\mathbb{R}^{3}}u^{3}(x)\mathrm{d}x.$

\smallskip

Next, we begin to move the sphere $ S_{\lambda}(x_0)$ from near $\la=0$ or $\la=+\infty$ to its limiting position. Therefore, the process of moving the sphere can be divided into two steps. We will discuss the two different cases $\alpha\geq\frac{5}{p}$ and $\frac{2}{p}\leq\alpha\leq\frac{5}{p}$ separately below.

\medskip

$\mathbf{Step\ 1.}$ Start moving the sphere $S_{\lambda}(x_0)$ from near $\lambda=0$ or $\la=+\infty$.

\smallskip

$\mathbf{Case\ 1:}$ $\alpha\geq \frac{5}{p}$. For $\lambda>0$ sufficiently large, we will show that
\begin{equation}\label{a16}
Q^{u}_{x_0,\la}(x)\leq0,\quad Q^{v}_{x_0,\la}(x)\leq0\quad\text{and}\quad Q^{w}_{x_0,\la}(x)\leq0,\quad \forall x\in B_{\lambda}(x_0)\backslash\{x_0\}.
\end{equation}
Namely, we begin moving the sphere $S_{\lambda}(x_0)$ from near $\la=+\infty$ inward the point $x_0$ such that \eqref{a16} holds.
Denote
\begin{equation*}
B_{\lambda,u}^{+}(x_0):=\{x\in B_{\lambda}(x_0)\backslash\{x_0\}\mid Q^{u}_{x_0,\la}(x)>0 \},
\end{equation*}
\begin{equation*}
B_{\lambda,v}^{+}(x_0):=\{x\in B_{\lambda}(x_0)\backslash\{x_0\}\mid Q^{v}_{x_0,\la}(x)>0 \},
\end{equation*}
\begin{equation*}
B_{\lambda,w}^{+}(x_0):=\{x\in B_{\lambda}(x_0)\backslash\{x_0\}\mid Q^{w}_{x_0,\la}(x)>0 \}.
\end{equation*}
The formula \eqref{a16} equivalent to proving, for $\lambda>0$ big enough,
$$B_{\lambda,u}^{+}(x_0)=B_{\lambda,v}^{+}(x_0)=B_{\lambda,w}^{+}(x_0)=\emptyset.$$
From \eqref{a10} and the mean value theorem, we deduce that, for any $x\in B_{\lambda,u}^{+}(x_0)$,
\begin{align}
0<Q^{u}_{x_0,\la}(x)
&\leq\frac{1}{2\pi^2}\int_{B_{\lambda,v}^{+}(x_0)}\left(\frac{1}{|x-y|^{2}}
-\frac{1}{|\frac{\lambda(x-x_0)}{|x-x_0|}-\frac{|x-x_0|(y-x_0)}{\lambda}|^{2}}\right)(v_{x_0,\lambda}^{4}
(y)-v^{4}(y))\mathrm{d}y\nonumber\\
&\leq\frac{1}{2\pi^2}\int_{B_{\lambda,v}^{+}(x_0)}\frac{4}{|x-y|^{2}}
(\xi_{x_0,\lambda}^v(y))^{3}Q^{v}_{x_0,\la}(y)\mathrm{d}y\nonumber\\
&\leq\frac{4}{2\pi^2}\int_{B_{\lambda,v}^{+}(x_0)}\frac{v_{x_0,\lambda}^{3}(y)}{|x-y|^{2}}
Q^{v}_{x_0,\la}(y)\mathrm{d}y,\label{a18}
\end{align}
where $v(y)<\xi_{x_0,\lambda}^v(y)<v_{x_0,\lambda}(y)$, for any $y\in B_{\lambda,v}^{+}(x_0)$. Similarly, for any $x\in B_{\lambda,v}^{+}(x_0)$, from \eqref{a11}, we have
\begin{align}
0<Q^{v}_{x_0,\la}(x)
&\leq\frac{1}{4\pi}\int_{B_{\lambda,w}^{+}(x_0)}\left(\frac{1}{|x-y|}-\frac{1}
{|\frac{\lambda(x-x_0)}{|x-x_0|}-\frac{|x-x_0|(y-x_0)}{\lambda}|}\right)(e^{pw_{x_0,\lambda}(y)}-e^{pw(y)})\mathrm{d}y\nonumber\\
&\leq\frac{1}{4\pi}\int_{B_{\lambda,w}^{+}(x_0)}\frac{p}{|x-y|}e^{p\xi_{x_0,\lambda}^w(y)}Q^{w}_{x_0,\la}(y)\mathrm{d}y\nonumber\\
&\leq\frac{p}{4\pi}\int_{B_{\lambda,w}^{+}(x_0)}\frac{e^{pw_{x_0,\lambda}(y)}}{|x-y|}Q^{w}_{x_0,\la}(y)\mathrm{d}y,\label{a19}
\end{align}
where $w(y)<\xi_{x_0,\lambda}^w(y)<w_{x_0,\lambda}(y)$, for any $y\in B_{\lambda,w}^{+}(x_0)$. Note that, for any $x,y\in B_\la(x_0)\backslash\{x_0\},\ x\ne y$, we have
\begin{align}
0<2\ln\left(\frac{|\frac{\lambda(x-x_0)}{|x-x_0|}-\frac{|x-x_0|(y-x_0)}{\lambda}|}
{|x-y|}\right)&=\ln\lr
1+\frac{(\la-\frac{|x-x_0|^2}{\la})(\la-\frac{|y-x_0|^2}{\la})}{|x-y|^2}\rr\nonumber\\
&\leq\ln\lr1+\frac
{\la^2}{|x-y|^2}\rr.\label{312ddz}
\end{align}
On the other hand, for any arbitrary $\varepsilon >0$, $\ln(1+t)=o(t^\var),\ \text{as}\ t\to\infty$, which indicates, for any given $\varepsilon >0$, there exists a $\eta(\varepsilon)>0$ such that
\begin{align}\label{313ddz}
\ln(1+t)\leq t^\var,\quad \forall t>\frac{1}{\eta^2(\var)}.
\end{align}
Thus, from \eqref{312ddz} and \eqref{313ddz}, we obtain
\begin{small}
\begin{align}
\ln\left(\frac{|\frac{\lambda(x-x_0)}{|x-x_0|}-\frac{|x-x_0|(y-x_0)}{\lambda}|}{|x-y|}\right)\leq
\frac{1}{2}\frac{\lambda^{2\varepsilon}}{|x-y|^{2\varepsilon}}, \quad \forall \,\, 0<|x-x_0|,|y-x_0|<\lambda,|x-y|<\lambda\eta(\varepsilon)\label{a20},\\
\ln\left(\frac{|\frac{\lambda(x-x_0)}{|x-x_0|}-\frac{|x-x_0|(y-x_0)}{\lambda}|}{|x-y|}\right)\leq
\frac{1}{2}\ln\Big(1+\frac{1}{\eta^2(\varepsilon)}\Big), \,\, \forall \, 0<|x-x_0|,|y-x_0|<\lambda,|x-y|\geq\lambda\eta(\varepsilon)\label{a21}.
\end{align}
\end{small}

Since $\alpha\geq\frac{5}{p}$, from \eqref{a12}, \eqref{a20}, \eqref{a21} and the mean value theorem, we derive that, for any given $\var>0$ small and $x\in B_{\lambda,w}^{+}(x_0)$,
\begin{equation}\label{a22}
\begin{aligned}
0<Q^{w}_{x_0,\la}(x)
&\leq\frac{1}{2\pi^2}\int_{B_{\lambda,u}^{+}(x_0)}\ln\left(
\frac{|\frac{\lambda(x-x_0)}{|x-x_0|}-\frac{|x-x_0|(y-x_0)}{\lambda}|}{|x-y|}\right)(u_{x_0,\lambda}^{3}(y)-u^{3}(y))\mathrm{d}y\\
&\leq\frac{3}{4\pi^2}\int_{B_{\lambda,u}^{+}(x_0)\cap B_{\lambda\eta(\varepsilon)}(x_0)}\frac{\lambda^{2\var}}{|x-y|^{2\var}} (\xi_{x_0,\lambda}^u(y))^{2} Q^{u}_{x_0,\la}(y)\mathrm{d}y\\
&\quad+ \frac{3}{4\pi^2} \ln \left(1+\frac{1}{\eta^2(\varepsilon)}\right)\int_{B_{\lambda,u}^{+}(x_0)\backslash B_{\lambda\eta(\varepsilon)}(x_0)}(\xi_{x_0,\lambda}^u(y))^{2} Q^{u}_{x_0,\la}(y)\mathrm{d}y\\
&\leq\frac{3}{4\pi^2}\int_{B_{\lambda,u}^{+}(x_0)\cap B_{\lambda\eta(\varepsilon)}(x_0)}\frac{\lambda^{2\var}}{|x-y|^{2\var}} u_{x_0,\lambda}^{2} (y) Q^{u}_{x_0,\la}(y)\mathrm{d}y\\
&\quad+C(\eta(\varepsilon))\int_{B_{\lambda,u}^{+}(x_0)\backslash B_{\lambda\eta(\varepsilon)}(x_0)}u_{x_0,\lambda}^{2} (y) Q^{u}_{x_0,\la}(y)\mathrm{d}y,
\end{aligned}
\end{equation}
where $u(y)< \xi_{x_0,\lambda}^u(y)< u_{x_0,\lambda}(y)$, for any $y\in  B_{\lambda,u}^{+}(x_0)$.
\smallskip

Now we need the following Hardy-Littlewood-Sobolev inequality (c.f. e.g. \cite{FL1,FL,Lieb}, see also \cite{DHL,DGZ,DZ,NN}).
\begin{lem} [Hardy-Littlewood-Sobolev inequality]
Let $n\geq1$, $0<s<n$, and $1<p<q<+\infty$ be such that $s+\frac{n}{q}=\frac{n}{p}$. Then we have
\begin{equation}\label{hl}
\left\|\int_{\mathbb{R}^{n}}\frac{f(y)}{|x-y|^{n-s}}\mathrm{d}y\right\|_{L^{q}(\mathbb{R}^{n})}\leq C_{n,s,p,q}\|f\|_{L^{p}(\mathbb{R}^{n})},
\end{equation}
for all $f\in L^{p}(\mathbb{R}^{n})$.
\end{lem}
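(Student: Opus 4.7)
The plan is to prove the Hardy-Littlewood-Sobolev inequality \eqref{hl} via Hedberg's pointwise estimate combined with the $L^p$-boundedness of the Hardy-Littlewood maximal operator, which is arguably the most elementary route in the non-endpoint regime $1 < p < q < +\infty$ with $\frac{1}{q} = \frac{1}{p} - \frac{s}{n}$.

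The first step is a pointwise splitting. Write the Riesz potential $I_s f(x) := \int_{\R^n}\frac{f(y)}{|x-y|^{n-s}}\mathrm{d}y$ as the sum of a near piece $A_R(x)$ over $\{|x-y| \leq R\}$ and a far piece $B_R(x)$ over $\{|x-y| > R\}$, where $R > 0$ is a parameter to be optimized. For $A_R$, a dyadic decomposition of the annulus $\{2^{-k-1}R < |x-y| \leq 2^{-k}R\}$ gives $A_R(x) \leq C\sum_{k \geq 0} (2^{-k}R)^{s-n} \int_{|x-y|\leq 2^{-k}R} |f(y)|\mathrm{d}y \leq C R^{s}\,Mf(x)$, where $Mf$ is the Hardy-Littlewood maximal function.

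For $B_R$, I would apply H\"older's inequality with exponent $p$: since the HLS scaling $\frac{1}{q} = \frac{1}{p} - \frac{s}{n}$ forces $p < n/s$ and hence $(n-s)p' > n$, the kernel $|x-y|^{-(n-s)p'}$ is integrable at infinity, yielding
\begin{equation*}
B_R(x) \leq \|f\|_{L^p}\left(\int_{|x-y|>R} |x-y|^{-(n-s)p'}\mathrm{d}y\right)^{1/p'} \leq C\,R^{s - n/p}\|f\|_{L^p}.
\end{equation*}
Balancing the two bounds by choosing $R = \bigl(\|f\|_{L^p}/Mf(x)\bigr)^{p/n}$ leads to Hedberg's pointwise estimate
\begin{equation*}
I_s f(x) \leq C\,(Mf(x))^{1 - sp/n}\|f\|_{L^p}^{sp/n} = C\,(Mf(x))^{p/q}\|f\|_{L^p}^{1 - p/q},
\end{equation*}
where the exponent identities follow from the HLS scaling.

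Finally, I would raise to the $q$-th power, integrate over $\R^n$, and invoke the $L^p$-boundedness of the maximal operator ($\|Mf\|_{L^p} \leq C_{n,p}\|f\|_{L^p}$ for $1 < p < \infty$) to conclude
\begin{equation*}
\|I_s f\|_{L^q}^q \leq C\,\|f\|_{L^p}^{q(1 - p/q)}\|Mf\|_{L^p}^{p} \leq C\,\|f\|_{L^p}^q.
\end{equation*}
The main (mild) obstacle is verifying that the exponent $s - n/p$ in the far-part bound is negative, which uses precisely the HLS scaling $\frac{1}{p} > \frac{s}{n}$; once this is checked the optimization in $R$ is routine. This strategy avoids the heavier machinery of rearrangement inequalities or symmetrization, needed only for the sharp constants and extremizers (Lieb), which are not required here since we only use \eqref{hl} qualitatively in the moving-spheres estimates below.
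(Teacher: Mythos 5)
Your proof is correct. A small clarification of context first: the paper does not prove this lemma at all---it simply states it and cites \cite{FL1,FL,Lieb} (and related works), treating it as a classical black box. So there is no ``paper proof'' to compare against; what you have supplied is a self-contained argument where the authors rely on the literature.

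That said, your Hedberg-style argument is the right elementary route and all the exponent bookkeeping checks out: the HLS scaling $\frac{1}{p}=\frac{1}{q}+\frac{s}{n}$ indeed forces $\frac{1}{p}>\frac{s}{n}$, hence $(n-s)p'>n$ and the tail integral converges, giving $B_R(x)\leq C R^{s-n/p}\|f\|_{L^p}$ with $s-n/p<0$; the near part gives $A_R(x)\leq C R^{s}Mf(x)$; optimizing in $R$ yields $I_s f(x)\leq C\,(Mf(x))^{p/q}\|f\|_{L^p}^{1-p/q}$; raising to the $q$-th power and integrating, using $q\cdot p/q=p$, reduces everything to the $L^p$-boundedness of $M$. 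This is a genuinely different (and more elementary) route than what the cited references carry out: Lieb's approach via symmetric decreasing rearrangement and Frank--Lieb's rearrangement-free argument are both designed to capture the sharp constant and characterize extremizers, which are not needed here. Your argument trades sharpness for brevity and self-containment, which is entirely appropriate since the paper only uses \eqref{hl} qualitatively inside the moving-spheres machinery. One small cosmetic remark: it is worth stating explicitly that in the near-part estimate you may assume $Mf(x)<\infty$ (true a.e.\ for $f\in L^p$), and that the case $f\equiv 0$ is trivial, so the choice of $R$ is well-defined; this is implicit in your balancing step but is the kind of detail a referee would flag.
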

From the Hardy-Littlewood-Sobolev inequality \eqref{hl}, H\"older inequality, \eqref{a18}, \eqref{a19} and \eqref{a22}, we get, for any given $\var\in(0,\frac{1}{16})$ sufficiently small to be chosen later and any $t\in(\frac{3}{2\var},+\infty)$,
\begin{align}\label{a23}
\|Q_{x_0,\lambda}^u(x)\|_{L^q(B_{\lambda,u}^+(x_0))}
&\leq C\|v^3_{x_0,\lambda}(x)Q_{x_0,\lambda}^v(x)\|_{L^{\frac{4r}{3r+4}}(B_{\lambda,v}^+(x_0))}\nonumber\\
&\leq C\|v_{x_0,\lambda}(x)\|^3_{L^{4}(B_{\lambda,v}^+(x_0))}
\|Q_{x_0,\lambda}^v(x)\|_{L^{r}(B_{\lambda,v}^+(x_0))}\nonumber\\
&\leq C\la^{\frac{3}{2}}\lr\int_{\R^3}\frac{v^4(x)}{|x-x_0|^2}\mathrm{d}x\rr^{\frac{3}{4}}
\|Q_{x_0,\lambda}^v(x)\|_{L^{r}(B_{\lambda,v}^+(x_0))},
\end{align}
\begin{align}\label{a24}
\|Q_{x_0,\lambda}^v(x)\|_{L^r(B_{\lambda,v}^+(x_0))}& \leq C\|e^{pw_{x_0,\lambda}(x)}Q_{x_0,\lambda}^w(x)\|_{L^{\frac{5t}{4t+5}}
(B_{\lambda,w}^+(x_0))}\nonumber\\
& \leq C\|e^{pw_{x_0,\lambda}(x)}\|_{L^{\frac{5}{4}}
(B_{\lambda,w}^+(x_0))}\|Q_{x_0,\lambda}^w(x)\|_{L^{t}(B_{\lambda,w}^+(x_0))}\nonumber\\
&\leq C\la^{-\frac{1}{5}}\lr\int_{\R^3}|x-x_0|^\frac{1}{4}
e^{\frac{5}{4}pw(x)}\mathrm{d}x\rr^{\frac{4}{5}}
\|Q_{x_0,\lambda}^w(x)\|_{L^{t}(B_{\lambda,w}^+(x_0))},
\end{align}
and
\begin{align}\label{a25}
&\quad\|Q_{x_0,\lambda}^w(x)\|_{L^t(B_{\lambda,w}^+(x_0))}\nonumber\\
&\leq C\lambda^{2\var}\|u_{x_0,\lambda}^2(x) Q_{x_0,\lambda}^u(x)\|_{L^{\frac{3t}{3+(3-2\var)t}}(B_{\lambda,v}^+(x_0))}\nonumber\\
&\quad+C(\eta(\varepsilon))|B_{\lambda,w}^+(x_0)|^{\frac{1}{t}}
\int_{B_{\lambda,u}^+(x_0)}u_{x_0,\lambda}^2(x) Q_{x_0,\lambda}^u(x)\mathrm{d}x\nonumber\\
&\leq C\lambda^{2\var}\|u_{x_0,\lambda}(x)\|^2_{L^{\frac{120}{27-40\var}}(B_{\lambda,u}^+(x_0))}
\|Q_{x_0,\lambda}^u(x)\|_{L^{q}(B_{\lambda,u}^+(x_0))}\nonumber\\
&\quad+C(\eta(\varepsilon))\lambda^{\frac{3}{t}}\|u_{x_0,\lambda}(x)\|^2_{L^{\frac{2q}{q-1}}
(B_{\lambda,u}^+(x_0))}
\|Q_{x_0,\lambda}^u(x)\|_{L^{q}(B_{\lambda,u}^+(x_0))}\nonumber\\
&\leq C\lambda^{-\frac{13}{10}-2\var}\lr\int_{\R^3}|x-x_0|^{\frac{78+240\var}{27-40\var}}
u^{\frac{120}{27-40\var}}(x)\mathrm{d}x\rr^{\frac{27-40\var}{60}}
\|Q_{x_0,\lambda}^u(x)\|_{L^{q}(B_{\lambda,u}^+(x_0))}\nonumber\\
&\quad+ C(\eta(\varepsilon))\lambda^{-\frac{13}{10}-\frac{3}{t}}\lr\int_{\R^3}
|x-x_0|^{\frac{26t+120}{9t-20}}u^{\frac{40t}{9t-20}}(x)\mathrm{d}x\rr^{\frac{9t-20}{20t}}
\|Q_{x_0,\lambda}^u(x)\|_{L^{q}(B_{\lambda,u}^+(x_0))},
\end{align}
where $q=\frac{12r}{12+5r}\in(\frac{120}{71},\frac{20}{11})$, and $r=\frac{15t}{15+2t}\in(\frac{120}{21},\frac{15}{2})$. Since $\alpha\geq\frac{5}{p}$, from Lemma \ref{le4} asymptotic properties of $(u,v,w)$ in \eqref{e1}, \eqref{e3} and \eqref{e4}, we deduce that, for $\var\in(0,\frac{1}{16})$ sufficiently small and $t\in(\frac{3}{2\var},+\infty)$ large enough, $\frac{v^4}{|x-x_0|^2}\in L^1(\R^3)$, $|x-x_0|^\frac{1}{4}e^{\frac{5}{4}pw}\in L^1(\R^3)$, $|x-x_0|^{\frac{78+240\var}{27-40\var}}
u^{\frac{120}{27-40\var}}\in L^1(\R^3)$ and $|x-x_0|^{\frac{26t+120}{9t-20}}u^{\frac{40t}{9t-20}}\in L^1(\R^3)$. Therefore, by \eqref{a23}, \eqref{a24} and \eqref{a25}, one has
\begin{align*}
\|Q^{u}_{x_0,\la}(x)\|_{L^{q}(B_{\lambda,u}^{+}(x_0))}\leq C_{t,\var}\max\{\la^{-2\var},\la^{-\frac{3}{t}}\}\|Q^{u}_{x_0,\la}(x)\|_{L^{q}(B_{\lambda,u}^{+}(x_0))},
\end{align*}
\begin{align*}
\|Q^{v}_{x_0,\la}(x)\|_{L^{r}(B_{\lambda,v}^{+}(x_0))}\leq C_{t,\var}\max\{\la^{-2\var},\la^{-\frac{3}{t}}\}
\|Q^{v}_{x_0,\la}(x)\|_{L^{r}(B_{\lambda,v}^{+}(x_0))},
\end{align*}
\begin{align*}
\|Q^{w}_{x_0,\la}(x)\|_{L^{t}(B_{\lambda,w}^{+}(x_0))}\leq C_{t,\var}\max\{\la^{-2\var},\la^{-\frac{3}{t}}\}\|Q^{w}_{x_0,\la}(x)\|_{L^{t}(B_{\lambda,w}^{+}(x_0))}.
\end{align*}
Therefore, choose $\var\in(0,\frac{1}{16})$ small enough and $t\in(\frac{3}{2\var},+\infty)$ large enough, there exists a $\Lambda_{0}>0$ big enough such that
\begin{equation}\label{a30}
\begin{aligned}
\|Q^{u}_{x_0,\la}(x)\|_{L^{q}(B_{\lambda,u}^{+}(x_0))}\leq \frac{1}{2}\|Q^{u}_{x_0,\la}(x)\|_{L^{q}(B_{\lambda,u}^{+}(x_0))},
\end{aligned}
\end{equation}
\begin{equation}\label{a31}
\begin{aligned}
\|Q^{v}_{x_0,\la}(x)\|_{L^{r}(B_{\lambda,v}^{+}(x_0))}\leq \frac{1}{2}\|Q^{v}_{x_0,\la}(x)\|_{L^{r}(B_{\lambda,v}^{+}(x_0))},
\end{aligned}
\end{equation}
\begin{equation}\label{a32}
\begin{aligned}
\|Q^{w}_{x_0,\la}(x)\|_{L^{t}(B_{\lambda,w}^{+}(x_0))}\leq \frac{1}{2}\|Q^{w}_{x_0,\la}(x)\|_{L^{t}(B_{\lambda,w}^{+}(x_0))}
\end{aligned}
\end{equation}
for all $\Lambda_{0}\leq\lambda<+\infty$. By \eqref{a30}, \eqref{a31} and \eqref{a32}, we obtain
$$\|Q^{u}_{x_0,\la}(x)\|_{L^{q}(B_{\lambda,u}^{+}(x_0))}=\|Q^{v}_{x_0,\la}(x)\|_{L^{r}(B_{\lambda,v}^{+}(x_0))}
=\|Q^{w}_{x_0,\la}(x)\|_{L^{t}(B_{\lambda,w}^{+}(x_0))}=0,$$
which means $B_{\lambda,u}^{+}(x_0)=B_{\lambda,v}^{+}(x_0)=B_{\lambda,w}^{+}(x_0)=\emptyset$, for any $\Lambda_0\leq\lambda<+\infty$.

Thus, for all $\Lambda_0\leq\lambda<+\infty$,
$$Q^{u}_{x_0,\la}(x)\leq0,\ Q^{v}_{x_0,\la}(x)\leq0,\ Q^{w}_{x_0,\la}(x)\leq0,\quad \forall x\in B_{\lambda}(x_0)\backslash\{x_0\}.$$
which completes Step 1 for the case $\alpha\geq\frac{5}{p}$.

\smallskip

$\mathbf{Case\ 2:}$  $\frac{2}{p}\leq\alpha\leq\frac{5}{p}$. We will prove that, for $\lambda>0$ sufficiently small,
\begin{equation}\label{b1}
Q^{u}_{x_0,\la}(x)\geq0,\ Q^{v}_{x_0,\la}(x)\geq0,\ Q^{w}_{x_0,\la}(x)\geq0,\quad \forall x\in B_{\lambda}(x_0)\backslash\{x_0\}.
\end{equation}
Namely, we start moving the sphere
$S_{\lambda}(x_0):=\{x\in \mathbb{R}^{3}\mid |x-x_0|=\lambda\}$
from near the point $x_0$ outward such that (\ref{b1}) holds. Denote
\begin{equation*}
B_{\lambda,u}^{-}(x_0):=\{x\in B_{\lambda}(x_0)\backslash\{x_0\}\mid Q^{u}_{x_0,\la}(x)<0 \},
\end{equation*}
\begin{equation*}
B_{\lambda,v}^{-}(x_0):=\{x\in B_{\lambda}(x_0)\backslash\{x_0\}\mid Q^{v}_{x_0,\la}(x)<0 \},
\end{equation*}
\begin{equation*}
B_{\lambda,w}^{-}(x_0):=\{x\in B_{\lambda}(x_0)\backslash\{x_0\}\mid Q^{w}_{x_0,\la}(x)<0 \}.
\end{equation*}
For $\lambda>0$ sufficiently small, \eqref{b1} is equivalent to proving that the following statement
\begin{equation*}
B_{\lambda,u}^{-}(x_0)=B_{\lambda,v}^{-}(x_0)=B_{\lambda,w}^{-}(x_0)=\emptyset
\end{equation*}
is true. From the mean value theorem and \eqref{a10}, we derive that, for any $x\in B_{\lambda,u}^{-}(x_0)$,
\begin{align}
0>Q^{u}_{x_0,\la}(x)&=u_{x_0,\lambda}(x)-u(x)\nonumber\\
&\geq\frac{1}{2\pi^2}\int_{B_{\lambda,v}^{-}(x_0)}\left(\frac{1}{|x-y|^{2}}
-\frac{1}{|\frac{\lambda(x-x_0)}{|x-x_0|}-\frac{|x-x_0|(y-x_0)}{\lambda}|^{2}}\right)
(v_{x_0,\lambda}^{4}(y)-v^{4}(y))\mathrm{d}y\nonumber\\
&\geq\frac{1}{2\pi^2}\int_{B_{\lambda,v}^{-}(x_0)}\frac{4}{|x-y|^{2}}
(\bar{\xi}^v_{x_0,\lambda}(y))^{3}Q^{v}_{x_0,\la}(y)\mathrm{d}y\nonumber\\
&\geq\frac{4}{2\pi^2}\int_{B_{\lambda,v}^{-}(x_0)}\frac{v^{3}(y)}{|x-y|^{2}}
Q^{v}_{x_0,\la}(y)\mathrm{d}y,\label{b6}
\end{align}
where $v_{x_0,\lambda}(y)<\bar{\xi}_{x_0,\lambda}^v(y)<v(y)$, for any $y\in B_{\lambda,v}^{-}(x_0)$. From \eqref{a11}, for any $x\in B_{\lambda,v}^{-}(x_0)$, we have
\begin{align}
0>Q^{v}_{x_0,\la}(x)&=v_{x_0,\lambda}(x)-v(x)\nonumber\\
&\geq\frac{1}{2\pi}\int_{B_{\lambda,w}^{-}(x_0)}\left(\frac{1}{|x-y|}-\frac{1}
{|\frac{\lambda(x-x_0)}{|x-x_0|}-\frac{|x-x_0|(y-x_0)}{\lambda}|}\right)
(e^{pw_{x_0,\lambda}(y)}-e^{pw(y)})\mathrm{d}y\nonumber\\
&\geq\frac{1}{2\pi}\int_{B_{\lambda,w}^{-}(x_0)}\frac{p}{|x-y|}e^{p\bar{\xi}^w_{x_0,
\lambda}(y)}Q^{w}_{x_0,\la}(y)\mathrm{d}y\nonumber\\
&\geq\frac{p}{2\pi}\int_{B_{\lambda,w}^{-}(x_0)}\frac{e^{pw(y)}}{|x-y|}Q^{w}_{x_0,\la}
(y)\mathrm{d}y,\label{b7}
\end{align}
where $w_{x_0,\lambda}(y)<\bar{\xi}_{x_0,\lambda}^w(y)<w(y)$, for any $y\in B_{\lambda,w}^{-}(x_0)$. Since $\frac{2}{p}\leq\alpha\leq\frac{5}{p}$, from \eqref{a12}, \eqref{a20}, \eqref{a21} and the mean value theorem, we deduce that, for any given $\var>0$ small and $x\in B_{\lambda,w}^{-}(x_0)$,
\begin{align}
0>Q^{w}_{x_0,\la}(x)&=w_{x_0,\lambda}(x)-w(x)\nonumber\\
&\geq\frac{1}{2\pi^2}\int_{B_{\lambda,u}^{-}(x_0)}\ln\left(
\frac{|\frac{\lambda(x-x_0)}{|x-x_0|}-\frac{|x-x_0|(y-x_0)}{\lambda}|}{|x-y|}\right)
(u_{x_0,\lambda}^{3}(x)-u^{3}(x))\mathrm{d}y\nonumber\\
&\geq\frac{3}{4\pi^2}\int_{B_{\lambda,u}^{-}(x_0)\cap B_{\lambda\eta(\varepsilon)}(x_0)}\frac{\lambda^{2\var}}{|x-y|^{2\var}} (\bar{\xi}^u_{x_0,\lambda}(y))^{2} Q^{u}_{x_0,\la}(y)\mathrm{d}y\nonumber\\
&\quad+\frac{3}{4\pi^2} \ln \left(1+\frac{1}{\eta^2(\varepsilon)}\right)\int_{B_{\lambda,u}^{-}(x_0)\backslash B_{\lambda\eta(\varepsilon)}(x_0)}(\bar{\xi}^u_{x_0,\lambda}(y))^{2} Q^{u}_{x_0,\la}(y)\mathrm{d}y\nonumber\\
&\geq\frac{3}{4\pi^2}\int_{B_{\lambda,u}^{-}(x_0)\cap B_{\lambda\eta(\varepsilon)}(x_0)}\frac{\lambda^{2\var}}{|x-y|^{2\var}} u^{2} (y) Q^{u}_{x_0,\la}(y)\mathrm{d}y\nonumber\\
&\quad+C(\eta(\varepsilon))\int_{B_{\lambda,u}^{-}(x_0)\backslash B_{\lambda\eta(\varepsilon)}(x_0)}u^{2} (y) Q^{u}_{x_0,\la}(y)\mathrm{d}y,\label{b8}
\end{align}
where $u_{x_0,\lambda}(y)< \bar{\xi}^u_{x_0,\lambda}(y)< u(y)$ for any $y\in  B_{\lambda,u}^{-}(x_0)$. Take $\var\in(0,\frac{1}{16})$ sufficiently small and $t\in(\frac{3}{2\var},+\infty)$ large enough, from the Hardy-Littlewood-Sobolev inequality \eqref{hl}, H\"older inequality, \eqref{b6}, \eqref{b7} and \eqref{b8}, we obtain
\begin{align}\label{b341}
\|Q_{x_0,\lambda}^u(x)\|_{L^q(B_{\lambda,u}^-(x_0))}
&\leq C\|v^3(x)Q_{x_0,\lambda}^v(x)\|_{L^{\frac{4r}{3r+4}}(B_{\lambda,v}^-(x_0))}\nonumber\\
&\leq C\|v(x)\|^3_{L^{4}(B_{\lambda,v}^-(x_0))}
\|Q_{x_0,\lambda}^v(x)\|_{L^{r}(B_{\lambda,v}^-(x_0))}\nonumber\\
&\leq C\la^{\frac{9}{4}}
\|Q_{x_0,\lambda}^v(x)\|_{L^{r}(B_{\lambda,v}^-(x_0))},
\end{align}
\begin{align}\label{b342}
\|Q_{x_0,\lambda}^v(x)\|_{L^r(B_{\lambda,v}^-(x_0))}& \leq \|e^{pw(x)}Q_{x_0,\lambda}^w(x)\|_{L^{\frac{5t}{4t+5}}
(B_{\lambda,w}^-(x_0))}\nonumber\\
& \leq \|e^{pw(x)}\|_{L^{\frac{5}{4}}
(B_{\lambda,w}^-(x_0))}\|Q_{x_0,\lambda}^w(x)\|_{L^{t}(B_{\lambda,w}^-(x_0))}\nonumber\\
&\leq C\la^{\frac{12}{5}}
\|Q_{x_0,\lambda}^w(x)\|_{L^{t}(B_{\lambda,w}^-(x_0))},
\end{align}
and
\begin{align}\label{b343}
\|Q_{x_0,\lambda}^w(x)\|_{L^t(B_{\lambda,w}^-(x_0))}&\leq C\lambda^{2\var}\|u^2(x) Q_{x_0,\lambda}^u(x)\|_{L^{\frac{3t}{3+(3-2\var)t}}(B_{\lambda,v}^-(x_0))}\nonumber\\
&\quad+C(\eta(\varepsilon))|B_{\lambda,w}^-(x_0)|^{\frac{1}{t}}
\int_{B_{\lambda,u}^-(x_0)}u^2(x) Q_{x_0,\lambda}^u(x)\mathrm{d}x\nonumber\\
&\leq C\lambda^{2\var}\|u(x)\|^2_{L^{\frac{120}{27-40\var}}(B_{\lambda,u}^-(x_0))}
\|Q_{x_0,\lambda}^u(x)\|_{L^{q}(B_{\lambda,u}^-(x_0))}\nonumber\\
&\quad+C(\eta(\varepsilon))\lambda^{\frac{3}{t}}\|u(x)\|^2_{L^{\frac{2q}{q-1}}
(B_{\lambda,u}^-(x_0))}
\|Q_{x_0,\lambda}^u(x)\|_{L^{q}(B_{\lambda,u}^-(x_0))}\nonumber\\
&\leq C\lambda^{\frac{27}{20}}
\|Q_{x_0,\lambda}^u(x)\|_{L^{q}(B_{\lambda,u}^-(x_0))},
\end{align}
where $q=\frac{12r}{12+5r}\in(\frac{120}{71},\frac{20}{11})$, and $r=\frac{15t}{15+2t}\in(\frac{120}{21},\frac{15}{2})$. Combining \eqref{b341}, \eqref{b342} with \eqref{b343}, we have
\begin{align*}
\|Q^{u}_{x_0,\la}(x)\|_{L^{q}(B_{\lambda,u}^{-}(x_0))}\leq C\la^{6}\|Q^{u}_{x_0,\la}(x)\|_{L^{q}(B_{\lambda,u}^{-}(x_0))},
\end{align*}
\begin{align*}
\|Q^{v}_{x_0,\la}(x)\|_{L^{r}(B_{\lambda,v}^{-}(x_0))}\leq C\la^{6}\|Q^{v}_{x_0,\la}(x)\|_{L^{r}(B_{\lambda,v}^{-}(x_0))},
\end{align*}
\begin{align*}
\|Q^{w}_{x_0,\la}(x)\|_{L^{t}(B_{\lambda,w}^{-}(x_0))}\leq C\la^{6}\|Q^{w}_{x_0,\la}(x)\|_{L^{t}(B_{\lambda,w}^{-}(x_0))}.
\end{align*}
Then there exists a $\var_{0}>0$ small enough, such that
\begin{align*}
\|Q^{u}_{x_0,\la}(x)\|_{L^{q}(B_{\lambda,u}^{-}(x_0))}\leq \frac{1}{2}\|Q^{u}_{x_0,\la}(x)\|_{L^{q}(B_{\lambda,u}^{-}(x_0))},
\end{align*}
\begin{align*}
\|Q^{v}_{x_0,\la}(x)\|_{L^{r}(B_{\lambda,v}^{-}(x_0))}\leq \frac{1}{2}\|Q^{v}_{x_0,\la}(x)\|_{L^{r}(B_{\lambda,v}^{-}(x_0))},
\end{align*}
\begin{align*}
\|Q^{w}_{x_0,\la}(x)\|_{L^{t}(B_{\lambda,w}^{-}(x_0))}\leq \frac{1}{2}\|Q^{w}_{x_0,\la}(x)\|_{L^{t}(B_{\lambda,w}^{-}(x_0))}
\end{align*}
for any $0<\lambda\leq \var_{0}$. Therefore, we have proved that, for all $0<\lambda\leq \var_{0}$,
$$\|Q^{u}_{x_0,\la}(x)\|_{L^{q}(B_{\lambda,u}^{-}(x_0))}=
\|Q^{v}_{x_0,\la}(x)\|_{L^{r}(B_{\lambda,v}^{-}(x_0))}
=\|Q^{w}_{x_0,\la}(x)\|_{L^{t}(B_{\lambda,w}^{-}(x_0))}=0,$$
which indicates $B_{\lambda,u}^{-}(x_0)=B_{\lambda,v}^{-}(x_0)=B_{\lambda,w}^{-}(x_0)=\emptyset$. Furthermore,
$$Q^{u}_{x_0,\la}(x)\geq0,\ Q^{v}_{x_0,\la}(x)\geq0,\ Q^{w}_{x_0,\la}(x)\geq0,\qquad \forall x\in B_{\lambda}(x_0)\backslash\{x_0\}.$$
This completes Step 1 for the case $\frac{2}{p}\leq\alpha\leq\frac{5}{p}$.

\medskip

$\mathbf{Step\ 2}.$ Move the sphere $S_{\lambda}$ outward or inward to the limiting position.

In what follows, we will derive contradictions in both the cases $\alpha>\frac{5}{p}$ and $\frac{2}{p}\leq\alpha<\frac{5}{p}$, and hence we must have $\alpha=\frac{5}{p}$.

\smallskip

$\mathbf{Case\ 1:}$ $\alpha>\frac{5}{p}$. Step 1 provides a starting point in the method of moving spheres. Next, we will continue to reduce the radius $\la$ as long as \eqref{a16} still holds. The critical scale $\lambda_{x_0}$ is defined by
\begin{equation}\label{c11}
\lambda_{x_0}:=\inf\{\lambda>0\mid Q_{x_0,\mu}^{u}\leq0,\ Q_{x_0,\mu}^{v}\leq0,\ Q_{x_0,\mu}^{w}\leq0,\ \text{in}\ B_{\mu}(x_0)\backslash\{x_0\},\ \forall \ \lambda\leq\mu<+\infty\}.
\end{equation}

According to Step 1, we know $0\leq\lambda_{x_0}<+\infty$ is well-defined. We first prove that in the case of $\alpha>\frac{5}{p}$, it must have $\lambda_{x_0}=0$. Otherwise, assume $\lambda_{x_0}>0$, we will show that the sphere can be moved inward a little further, which contradicts the definition of $\lambda_{x_0}$.

From the definition of $\lambda_{x_0}$, we have
$$Q_{x_0,\lambda_{x_0}}^{u}\leq0,\ Q_{x_0,\lambda_{x_0}}^{v}\leq0,\ Q_{x_0,\lambda_{x_0}}^{w}\leq0,\quad\ \text{in}\ B_{\lambda_{x_0}}(x_0)\backslash\{x_0\}.$$
By \eqref{a12}, for any $x\in B_{\lambda_{x_0}}(x_0)\backslash\{x_0\}$, it holds

\begin{equation*}
\begin{aligned}
Q_{x_0,\lambda_{x_0}}^{w}(x)&=\frac{1}{2\pi^2}\int_{B_{\lambda_{x_0}}(x_0)}\ln
\left(\frac{|\frac{\la_{x_0}(x-x_0)}{|x-x_0|}-\frac{|x-x_0|(y-x_0)}{\la_{x_0}}|}{|x-y|}
\right)
(u_{x_0,\la_{x_0}}^{3}(y)-u^{3}(y))\mathrm{d}y\\
&\quad+\left(\frac{5}{p}-\alpha\right)\ln\frac{\la_{x_0}}{|x-x_0|}\\
&\leq\lr\frac{5}{p}-\alpha\rr\ln\frac{\lambda_{x_0}}{|x-x_0|}<0.
\end{aligned}
\end{equation*}
Combing with \eqref{a10} and \eqref{a11}, it implies that
\begin{equation*}
\begin{aligned}
Q_{x_0,\lambda_{x_0}}^{v}(x)&=\frac{1}{4\pi}\int_{B_{\la_{x_0}}(x_0)}\left(\frac{1}{|x-y|}
-\frac{1}{|\frac{\la_{x_0}(x-x_0)}{|x-x_0|}-\frac{|x-x_0|(y-x_0)}{\la_{x_0}}|}\right)
(e^{pw_{x_0,\la_{x_0}}(y)}
-e^{pw(y)})\mathrm{d}y
<0
\end{aligned}
\end{equation*}
and
\begin{equation*}
\begin{aligned}
Q_{x_0,\lambda_{x_0}}^{u}(x)
&=\frac{1}{2\pi^2}\int_{B_{\la_{x_0}}(x_0)}\left(\frac{1}{|x-y|^{2}}
-\frac{1}{|\frac{\la_{x_0}(x-x_0)}{|x-x_0|}-\frac{|x-x_0|(y-x_0)}{\la_{x_0}}|^{2}}\right)
(v_{x_0,\la_{x_0}}^{4}(y)-v^{4}(y))\mathrm{d}y
<0.
\end{aligned}
\end{equation*}

Choose a small enough constant $\delta_{1}>0$, which will be determined later. Denote the narrow region
\begin{equation}\label{c5}
A_{\delta_{1}}:=\{x\in \mathbb{R}^{3}\mid  0<|x-x_0|<\delta_{1}\ \text{or}\ \lambda_{x_0}-\delta_{1}<|x-x_0|<\lambda_{x_0}\}\subset B_{\lambda_{x_0}}(x_0)\backslash\{x_0\}.
\end{equation}
Since $Q_{x_0,\lambda_{x_0}}^{u},\ Q_{x_0,\lambda_{x_0}}^{v}$ and $Q_{x_0,\lambda_{x_0}}^{w}$ are continuous w.r.t. $x$ in $\mathbb{R}^{3}\backslash\{x_0\}$ and $A_{\delta_{1}}^{c}:=(B_{\lambda_{x_0}}(x_0)\backslash\{x_0\})\backslash A_{\delta_{1}}$ is a compact subset, there exists a positive constant $c_{0}>0$ such that
$$Q_{x_0,\lambda_{x_0}}^{u}<-c_{0},\ Q_{x_0,\lambda_{x_0}}^{v}<-c_{0},\ Q_{x_0,\lambda_{x_0}}^{w}<-c_{0},\quad\ \forall\ x\in A_{\delta_{1}}^{c}.$$
Due to the continuity of $Q_{x_0,\lambda_{x_0}}^{u},\ Q_{x_0,\lambda_{x_0}}^{v}$ and $Q_{x_0,\lambda_{x_0}}^{w}$ w.r.t. $\la$, choosing $\delta_{2}<\delta_1$ small enough such that, for any $\lambda\in[\lambda_{x_0}-\delta_{2},\lambda_{x_0}]$, we have
$$Q^{u}_{x_0,\la}<-\frac{c_{0}}{2},\ Q^{v}_{x_0,\la}<-\frac{c_{0}}{2},\ Q^{w}_{x_0,\la}<-\frac{c_{0}}{2},\quad\ \forall\ x\in A_{\delta_{1}}^{c}.$$
Hence we must have
\begin{equation}\label{c6}
\begin{aligned}
B_{\lambda,u}^{+}\cup B_{\lambda,v}^{+}\cup B_{\lambda,w}^{+}(x_0)&\subset(B_{\lambda}(x_0)\backslash\{x_0\})\backslash A_{\delta_{1}}^{c}\\
&=
\{x\in \mathbb{R}^{3}\mid  0<|x-x_0|<\delta_{1}\ \text{or}\ \lambda_{x_0}-\delta_{1}<|x-x_0|<\lambda\},
\end{aligned}
\end{equation}
for any $\lambda\in[\lambda_{x_0}-\delta_{2},\lambda_{x_0}]$. By \eqref{a23}, \eqref{a24}, \eqref{a25}, we obtain, for $\var\in(0,\frac{1}{16})$ sufficiently small and $t\in(\frac{3}{2\var},+\infty)$ large enough,
\begin{align}\label{c7}
\|Q^{u}_{x_0,\la}(x)\|_{L^{q}(B_{\lambda,u}^{+}(x_0))}\leq& C_{\var,\la}\|v_{x_0,\lambda}(x)\|^3_{L^{4}(B_{\lambda,v}^+(x_0))}
\|e^{pw_{x_0,\lambda}(x)}\|_{L^{\frac{5}{4}}
(B_{\lambda,w}^+(x_0))}\|Q_{x_0,\lambda}^u(x)\|_{L^{q}(B_{\lambda,u}^+(x_0))}\nonumber\\
&\quad\times \lr\|u_{x_0,\lambda}(x)\|^2_{L^{\frac{120}{27-40\var}}(B_{\lambda,u}^+(x_0))}
+|B_{\lambda,w}^+(x_0)|^{\frac{1}{t}}\|u_{x_0,\lambda}(x)\|^2_{L^{\frac{2q}{q-1}}
(B_{\lambda,u}^+(x_0))}\rr,
\end{align}
\begin{align}\label{341d}
\|Q^{v}_{x_0,\la}(x)\|_{L^{r}(B_{\lambda,v}^{+}(x_0))}\leq& C_{\var,\la}\|v_{x_0,\lambda}(x)\|^3_{L^{4}(B_{\lambda,v}^+(x_0))}
\|e^{pw_{x_0,\lambda}(x)}\|_{L^{\frac{5}{4}}
(B_{\lambda,w}^+(x_0))}\|Q_{x_0,\lambda}^v(x)\|_{L^{r}(B_{\lambda,v}^+(x_0))}\nonumber\\
&\quad\times \lr\|u_{x_0,\lambda}(x)\|^2_{L^{\frac{120}{27-40\var}}(B_{\lambda,u}^+(x_0))}
+|B_{\lambda,w}^+(x_0)|^{\frac{1}{t}}\|u_{x_0,\lambda}(x)\|^2_{L^{\frac{2q}{q-1}}
(B_{\lambda,u}^+(x_0))}\rr,
\end{align}
\begin{align}\label{342d}
\|Q^{w}_{x_0,\la}(x)\|_{L^{t}(B_{\lambda,w}^{+}(x_0))}\leq& C_{\var,\la}\|v_{x_0,\lambda}(x)\|^3_{L^{4}(B_{\lambda,v}^+(x_0))}
\|e^{pw_{x_0,\lambda}(x)}\|_{L^{\frac{5}{4}}
(B_{\lambda,w}^+(x_0))}\|Q_{x_0,\lambda}^w(x)\|_{L^{t}(B_{\lambda,w}^+(x_0))}\nonumber\\
&\quad\times \lr\|u_{x_0,\lambda}(x)\|^2_{L^{\frac{120}{27-40\var}}(B_{\lambda,u}^+(x_0))}
+|B_{\lambda,w}^+(x_0)|^{\frac{1}{t}}\|u_{x_0,\lambda}(x)\|^2_{L^{\frac{2q}{q-1}}
(B_{\lambda,u}^+(x_0))}\rr,
\end{align}
where $q=\frac{12r}{12+5r}$, and $r=\frac{15t}{15+2t}$. From $\alpha\geq\frac{5}{p}$, by asymptotic properties of $(u,v,w)$ in \eqref{e1}, \eqref{e3} and \eqref{e4}, we deduce for $\var\in(0,\frac{1}{16})$ sufficiently small and $t\in(\frac{3}{2\var},+\infty)$ large enough, we get
$v_{x_0,\lambda}\in L^4(\R^3), e^{pw_{x_0,\lambda}}\in L^{\frac{5}{4}}(\R^3), u_{x_0,\lambda}\in L^{\frac{120}{27-40\var}}(\R^3)\cap L^{\frac{2q}{q-1}}(\R^3)$. Thus, from \eqref{c6}, take $\delta_1$ and $\delta_2$ small enough such that, for any $\lambda\in[\lambda_{x_0}-\delta_{2},\lambda_{x_0}]$,
\begin{align*}
&\quad\quad C_{\var,\la}\|v_{x_0,\lambda}(x)\|^3_{L^{4}(B_{\lambda,v}^+(x_0))}
\|e^{pw_{x_0,\lambda}(x)}\|_{L^{\frac{5}{4}}
(B_{\lambda,w}^+(x_0))}\\
&\times\lr\|u_{x_0,\lambda}(x)\|^2_{L^{\frac{120}{27-40\var}}(B_{\lambda,u}^+(x_0))}
+|B_{\lambda,w}^+(x_0)|^{\frac{1}{t}}\|u_{x_0,\lambda}(x)\|^2_{L^{\frac{2q}{q-1}}
(B_{\lambda,u}^+(x_0))}\rr
<\frac{1}{2}.
\end{align*}
From \eqref{c7},  \eqref{341d} and \eqref{342d}, we get that, for any $\lambda\in[\lambda_{x_0}-\delta_{2},\lambda_{x_0}]$,
$$\|Q^{u}_{x_0,\la}(x)\|_{L^{q}(B_{\lambda,u}^{+}(x_0))}
=\|Q^{v}_{x_0,\la}(x)\|_{L^{r}(B_{\lambda,v}^{+}(x_0))}
=\|Q^{w}_{x_0,\la}(x)\|_{L^{t}(B_{\lambda,w}^{+}(x_0))}=0.$$
Thus, for any $\lambda\in[\lambda_{x_0}-\delta_{2},\lambda_{x_0}]$,
$$Q^{u}_{x_0,\la}(x)\leq0,\ Q^{v}_{x_0,\la}(x)\leq0,\ Q^{w}_{x_0,\la}(x)\leq0,\quad\ \forall x\in B_{\lambda}(x_0)\backslash\{x_0\},$$
which contradicts the definition of $\lambda_{x_0}$. Therefore, we must have $\lambda_{x_0}=0$.

In order to derive a contradiction, we need the following calculus Lemma (see Lemmas 11.1 and 11.2 in \cite{LZ1}).

\begin{lem} \label{le10}
Let $n\geq1$, $\nu\in\mathbb{R}$ and $u\in C^{1}(\mathbb{R}^{n})$. For every $x_{0}\in\mathbb{R}^{n}$ and $\lambda>0$, define
$$u_{x_{0},\lambda}(x)=\left(\frac{\lambda}{|x-x_{0}|}\right)^{\nu}u
\left(\frac{\lambda^{2}(x-x_{0})}{|x-x_{0}|^{2}}+x_{0}\right),
\quad\forall x\in \mathbb{R}^{n}\backslash \{x_{0}\}.$$
Then, we have\\
$(i)$ If for every $x_{0}\in \mathbb{R}^{n}$, there exists a $0<\lambda_{x_{0}}<+\infty$ such that
$$u_{x_{0},\lambda_{x_{0}}}(x)=u(x),\qquad \forall\ x\in \mathbb{R}^{n}\backslash \{x_{0}\},$$
then for some $C\in R$, $\mu>0$ and $\bar{x}\in\mathbb{R}^{n}$,
$$u(x)=C\left(\frac{\mu}{1+\mu^{2}|x-\bar{x}|^{2}}\right)^{\frac{\nu}{2}}.$$
$(ii)$ If for every $x_{0}\in \mathbb{R}^{n}$ and any $0<\lambda<+\infty$,
$$u_{x_{0},\lambda}(x)\geq u(x),\qquad \forall\ x\in B_{\lambda}(x_{0})\backslash \{x_{0}\},$$
then $u\equiv C$ for some constant $C\in \mathbb{R}$.
\end{lem}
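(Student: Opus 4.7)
The plan is to handle the two parts separately, with part (ii) considerably easier than part (i).

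For part (ii), the key observation is that on the sphere $\{|x - x_{0}| = \lambda\}$ one has $u_{x_{0},\lambda}(x) = u(x)$, so this sphere is the equality case of the given inequality. Fix $x \neq x_{0}$ and set $r := |x - x_{0}|$. Then the map $\lambda \mapsto u_{x_{0},\lambda}(x) - u(x)$ vanishes at $\lambda = r$ and is nonnegative for $\lambda \geq r$, hence its one-sided derivative at $\lambda = r^{+}$ is $\geq 0$. A direct calculation using $u_{x_{0},\lambda}(x) = (\lambda/r)^{\nu} u\bigl(x_{0} + \lambda^{2} r^{-2}(x-x_{0})\bigr)$ yields
\[
\frac{d}{d\lambda} u_{x_{0},\lambda}(x)\Big|_{\lambda = r} = \frac{1}{r}\bigl[\nu u(x) + 2(x-x_{0})\cdot \nabla u(x)\bigr] \geq 0.
\]
Because $x_{0}$ is arbitrary, writing $x_{0} = x - t\omega$ for $t > 0$ and $\omega \in S^{n-1}$ gives $\nu u(x) + 2t\,\omega\cdot\nabla u(x) \geq 0$ for every $t > 0$ and every direction $\omega$; letting $t \to +\infty$ forces $\omega \cdot \nabla u(x) \geq 0$ for every $\omega$, so $\nabla u(x) = 0$. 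Since $x$ was arbitrary, $u$ is constant.

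For part (i), I would first rule out sign change: if $u \not\equiv 0$, then the identity $u(x^{x_{0},\lambda_{x_{0}}}) = (|x - x_{0}|/\lambda_{x_{0}})^{\nu} u(x)$ propagates positivity across $\mathbb{R}^{n}$, so we may assume $u > 0$ throughout. Next, the plan is to reduce the Kelvin identity to a linear condition by passing to $g(x) := u(x)^{-2/\nu}$ (assuming $\nu \neq 0$; the degenerate case $\nu = 0$ reduces to $u$ constant as in part (ii)). A short computation shows that $u_{x_{0},\lambda_{x_{0}}} \equiv u$ is equivalent to $g(x)/|x - x_{0}| = g(y)/|y - x_{0}|$ whenever $y$ is the Kelvin image of $x$ with respect to center $x_{0}$ and radius $\lambda_{x_{0}}$. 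This rigid constraint, imposed simultaneously at every center $x_{0} \in \mathbb{R}^{n}$, should force $g$ to be a quadratic polynomial of the form $g(x) = A + B|x - \bar{x}|^{2}$ for suitable constants $A, B > 0$ and $\bar{x} \in \mathbb{R}^{n}$, and substituting back $u = g^{-\nu/2}$ recovers exactly the asserted bubble form with appropriate $C$ and $\mu$.

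The main obstacle is making this last step rigorous: showing that Kelvin-reflection invariance of $g/|\cdot - x_{0}|$ at every center pins down $g$ to a quadratic polynomial. I would proceed in two ways. First, by establishing that $x_{0} \mapsto \lambda_{x_{0}}$ depends smoothly on $x_{0}$ via an implicit-function argument (the $C^{1}$ regularity of $u$, together with the fact that equality at the critical $\lambda_{x_{0}}$ is transverse, suffices), then differentiating the invariance identity in both $x_{0}$ and $\lambda_{x_{0}}$ to obtain a closed system of first-order PDEs for $g$, which can be solved to give the quadratic form. Alternatively, and more robustly, by fixing two distinct centers $x_{0}, x_{0}'$ and composing the two Kelvin reflections to produce a one-parameter family of M\"obius transformations of $\mathbb{R}^{n} \cup \{\infty\}$ under which $u$ is covariantly invariant; Liouville's classification of conformal maps of $\mathbb{R}^{n}$ then restricts $u$ to the claimed three-parameter family. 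Finally, one verifies directly that every function of the asserted form does satisfy the hypothesis of (i), closing the classification.
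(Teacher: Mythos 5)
The paper does not prove Lemma~\ref{le10}; it is cited from Lemmas~11.1 and~11.2 of~\cite{LZ1} with the remark that the methods there (stated for $\nu>0$) extend to all real $\nu$. So the comparison is with that reference's proof.

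Your argument for part~(ii) is correct and is essentially the standard one: the Kelvin transform fixes the sphere $|x-x_0|=\lambda$ pointwise, so $\lambda\mapsto u_{x_0,\lambda}(x)$ has an interior-type one-sided minimum at $\lambda=|x-x_0|$, the computed derivative gives $\nu u(x)+2(x-x_0)\cdot\nabla u(x)\geq 0$, and sending $x_0$ to infinity along every direction forces $\nabla u(x)=0$. This is clean and complete.

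Part~(i) has a genuine gap. You correctly reduce the hypothesis, via $g:=u^{-2/\nu}$ (after handling $\nu=0$ and $u\equiv 0$ separately), to the statement that $g(x)/|x-x_0|$ is invariant under Kelvin reflection about $x_0$ for \emph{every} center $x_0$, and you correctly identify that the conclusion should be that $g$ is a positive quadratic polynomial $A+B|x-\bar{x}|^2$. But the crucial step --- deducing the quadratic form from the full family of Kelvin invariances --- is only asserted (``this rigid constraint \dots\ should force $g$ to be a quadratic''), and neither of the two proposed routes is carried out. The implicit-function route requires first establishing that $\lambda_{x_0}$ is (uniquely determined and) differentiable in $x_0$, which is not given. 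The Liouville-theorem route is misdirected as stated: Liouville's theorem classifies conformal diffeomorphisms of $\mathbb{R}^n\cup\{\infty\}$, whereas what is needed is a rigidity statement for functions that are \emph{covariant} under a large family of such maps; invoking Liouville does not by itself pin $u$ down to a three-parameter family. Your preliminary positivity argument is also not self-contained --- the assertion that a single Kelvin identity ``propagates positivity across $\mathbb{R}^{n}$'' does not follow from a pointwise sign-invariance under one reflection; one needs the global input described below.

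For comparison, the proof in the cited reference proceeds differently and more directly. One first sends $|y|\to\infty$ in the identity $u(y)=(\lambda_{x_0}/|y-x_0|)^{\nu}u\bigl(y^{x_0,\lambda_{x_0}}\bigr)$ to deduce that $\lim_{|y|\to\infty}|y|^{\nu}u(y)=\lambda_{x_0}^{\nu}u(x_0)$ exists and equals a constant $a$ independent of $x_0$. This single observation gives positivity (if $u$ vanishes at one point then $a=0$, hence $u\equiv 0$) and also gives $\lambda_{x_0}$ explicitly in terms of $u(x_0)$. One then pushes the asymptotic expansion of the Kelvin identity to the next order (or compares the identities at two centers) to extract the linear and quadratic coefficients of $g=u^{-2/\nu}$, showing it is exactly $A+B|x-\bar{x}|^{2}$. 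This limit-at-infinity step is the missing ingredient in your write-up, and supplying it is what makes the quadratic rigidity provable rather than merely plausible.
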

\begin{rem}\label{rem11}
In Lemma 11.1 and Lemma 11.2 of \cite{LZ1}, Li and Zhang have proved Lemma \ref{le10} for $\nu>0$. Nevertheless, their methods can also be applied to show Lemma \ref{le10} in the cases $\nu\leq0$, see \cite{Li,LZ,Xu}.
\end{rem}

From Lemma \ref{le10} (ii), replacing $u$ by $-u$ therein, we deduce that $u\equiv C_1$ for some constant $C_1$. Since $u^{3}\in L^{1}(\mathbb{R}^{3})$, we must have $u\equiv0$. However, in the system \eqref{a1}, we have $0=e^{pw(x)}>0$ in $\mathbb{R}^{3}$, which is a contradiction. Then $\alpha>\frac{5}{p}$ is impossible.

\smallskip

$\mathbf{Case\ 2:}$ $\frac{2}{p}\leq\alpha<\frac{5}{p}$.
In this case, the critical scale $\la_{x_0}$ is defined by
\begin{equation}\label{c10}
\la_{x_0}:=\sup\{\lambda>0\mid Q_{x_0,\mu}^{u}\geq0,\ Q_{x_0,\mu}^{v}\geq0,\ Q_{x_0,\mu}^{w}\geq0,\ \text{in}\ B_{\mu}(x_0)\backslash\{x_0\},\ \forall \ 0<\mu\leq\lambda\}.
\end{equation}
From step 1 it can be seen that $\la_{x_0}$ is well defined and  $0<\la_{x_0}\leq+\infty$ for any $x_{0}\in\mathbb{R}^{3}$. We will show that $\lambda_{x_0}=+\infty$, which will lead to a contradiction again as in $\mathbf{Case\ 1:}$ $\alpha>\frac{5}{p}$. Otherwise, assuming $\la_{x_0}<\infty$, we will prove that the sphere can be moved outward a bit further, which contradicts the definition of $\la_{x_0}$.

According to the definition of $\la_{x_0}$, we have
$$Q_{x_0,\la_{x_0}}^{u}\geq0,\ Q_{x_0,\la_{x_0}}^{v}\geq0,\ Q_{x_0,\la_{x_0}}^{w}\geq0 \qquad\ \text{in}\ B_{\la_{x_0}}(x_0)\backslash\{x_0\}.$$
We deduce from \eqref{a12} that, for any $x\in B_{\la_{x_0}}(x_0)\backslash\{x_0\}$, it holds
\begin{equation*}
\begin{aligned}
Q_{x_0,\la_{x_0}}^{w}(x)&=\frac{1}{2\pi^2}\int_{B_{\la_{x_0}}(x_0)}
\ln\left(\frac{|\frac{\la_{x_0}(x-x_0)}{|x-x_0|}-\frac{|x-x_0|(y-x_0)}{\la_{x_0}}|}{|x-y|}\right)
(u_{x_0,\la_{x_0}}^{3}(y)-u^{3}(y))\mathrm{d}y\\
&\quad+\left(\frac{5}{p}-\alpha\right)\ln\frac{\la_{x_0}}{|x-x_0|}\\
&\geq\left(\frac{5}{p}-\alpha\right)\ln\frac{\la_{x_0}}{|x-x_0|}>0,
\end{aligned}
\end{equation*}
which together with \eqref{a10} and \eqref{a11} imply that
\begin{equation*}
\begin{aligned}
Q_{x_0,\la_{x_0}}^{v}(x)&=\frac{1}{4\pi}\int_{B_{\la_{x_0}}(x_0)}
\left(\frac{1}{|x-y|}-\frac{1}{|\frac{\la_{x_0}(x-x_0)}{|x-x_0|}-\frac{|x-x_0|(y-x_0)}{\la_{x_0}}|}\right)
(e^{pw_{x_0,\la_{x_0}}(y)}-e^{pw(y)})\mathrm{d}y
>0
\end{aligned}
\end{equation*}
and
\begin{equation*}
\begin{aligned}
Q_{x_0,\la_{x_0}}^{u}(x)
&=\frac{1}{2\pi^2}\int_{B_{\la_{x_0}}(x_0)}
\left(\frac{1}{|x-y|^{2}}-\frac{1}{|\frac{\la_{x_0}(x-x_0)}{|x-x_0|}-\frac{|x-x_0|(y-x_0)}{\la_{x_0}}|^{2}}\right)
(v_{x_0,\la_{x_0}}^{4}(y)-v^{4}(y))\mathrm{d}y
>0.
\end{aligned}
\end{equation*}

Take a sufficiently small constant $\delta_1>0$, which will be determined later. Denote the narrow region $A_{\delta_1}\subset(B_{\la_{x_0}}(x_0)\backslash\{x_0\})$ in \eqref{c5}. Since that $Q_{x_0,\la_{x_0}}^{u},\ Q_{x_0,\la_{x_0}}^{v}$ and $Q_{x_0,\la_{x_0}}^{w}$ are continuous about $x$ in $\mathbb{R}^{3}\backslash\{x_0\}$ and $A_{\delta_1}^{c}:=(B_{\la_{x_0}}(x_0)\backslash\{x_0\})\backslash A_{\delta_1}$ is a compact subset, there exists a constant $c_{1}>0$ such that
$$Q_{x_0,\la_{x_0}}^{u}>c_{1},\ Q_{x_0,\la_{x_0}}^{v}>c_{1},\ Q_{x_0,\la_{x_0}}^{w}>c_{1},\quad\ \forall\ x\in A_{\delta_1}^{c}.$$
By the continuity of $Q_{x_0,\la_{x_0}}^{u},\ Q_{x_0,\la_{x_0}}^{v}$ and $Q_{x_0,\la_{x_0}}^{w}$ w.r.t. $\la$, we can take $\delta_2>0$ small enough such that, for any $\lambda\in[\la_{x_0},\la_{x_0}+\delta_2]$,
$$Q^{u}_{x_0,\la}>\frac{c_{1}}{2},\ Q^{v}_{x_0,\la}>\frac{c_{1}}{2},\ Q^{w}_{x_0,\la}>\frac{c_{1}}{2},\quad\ \forall\ x\in A_{\delta_1}^{c}.$$
Thus we must have
\begin{equation*}
\begin{aligned}
B_{\lambda,u}^{-}\cup B_{\lambda,v}^{-}\cup B_{\lambda,w}^{-}&\subset(B_{\la}(x_0)\backslash\{x_0\})\backslash A_{\delta_1}^{c}\\
&=
\{x\in \mathbb{R}^{3}\mid  0<|x-x_0|<\delta_1\ \text{or}\ \la_{x_0}-\delta_{1}<|x-x_0|<\lambda\},
\end{aligned}
\end{equation*}
for any $\lambda\in[\la_{x_0},\la_{x_0}+\delta_2]$. From \eqref{b341}, \eqref{b342} and \eqref{b343}, we obtain
\begin{align}\label{b345}
\|Q_{x_0,\lambda}^u(x)\|_{L^q(B_{\lambda,u}^-(x_0))}
&\leq C\|v\|^3_{L^{4}(B_{\lambda,v}^-(x_0))}
\|e^{pw(x)}\|_{L^{\frac{5}{4}}
(B_{\lambda,w}^-(x_0))}\|Q_{x_0,\lambda}^u(x)\|_{L^{q}(B_{\lambda,u}^-(x_0))}\nonumber\\
&\quad\times\lr\|u(x)\|^2_{L^{\frac{120}{27-40\var}}(B_{\lambda,u}^-(x_0))}
+|B_{\lambda,w}^-(x_0)|^{\frac{1}{t}}\|u(x)\|^2_{L^{\frac{2q}{q-1}}
(B_{\lambda,u}^-(x_0))}\rr,
\end{align}
\begin{align}\label{b346}
\|Q_{x_0,\lambda}^v(x)\|_{L^r(B_{\lambda,v}^-(x_0))}
&\leq C\|v\|^3_{L^{4}(B_{\lambda,v}^-(x_0))}
\|e^{pw(x)}\|_{L^{\frac{5}{4}}
(B_{\lambda,w}^-(x_0))}\|Q_{x_0,\lambda}^v(x)\|_{L^{r}(B_{\lambda,v}^-(x_0))}\nonumber\\
&\quad\times\lr\|u(x)\|^2_{L^{\frac{120}{27-40\var}}(B_{\lambda,u}^-(x_0))}
+|B_{\lambda,w}^-(x_0)|^{\frac{1}{t}}\|u(x)\|^2_{L^{\frac{2q}{q-1}}
(B_{\lambda,u}^-(x_0))}\rr
\end{align}
and
\begin{align}\label{b347}
\|Q_{x_0,\lambda}^w(x)\|_{L^t(B_{\lambda,w}^-(x_0))}
&\leq C\|v\|^3_{L^{4}(B_{\lambda,v}^-(x_0))}
\|e^{pw(x)}\|_{L^{\frac{5}{4}}
(B_{\lambda,w}^-(x_0))}\|Q_{x_0,\lambda}^w(x)\|_{L^{t}(B_{\lambda,w}^-(x_0))}\nonumber\\
&\quad\times\lr\|u(x)\|^2_{L^{\frac{120}{27-40\var}}(B_{\lambda,u}^-(x_0))}
+|B_{\lambda,w}^-(x_0)|^{\frac{1}{t}}\|u(x)\|^2_{L^{\frac{2q}{q-1}}
(B_{\lambda,u}^-(x_0))}\rr.
\end{align}
From the local boundedness of $u,v$ and $w$, choose $\delta_1$ and $\delta_2$ small enough, we obtain that, for any $\lambda\in[\la_{x_0},\la_{x_0}+\delta_2]$,
\begin{align*}
&\quad \quad C\|v\|^3_{L^{4}(B_{\lambda,v}^-(x_0))}
\|e^{\frac{5}{4}pw(x)}\|_{L^{\frac{5}{4}}
(B_{\lambda,w}^-(x_0))}\\
&\times\lr\|u(x)\|^2_{L^{\frac{120}{27-40\var}}(B_{\lambda,u}^-(x_0))}
+|B_{\lambda,w}^-(x_0)|^{\frac{1}{t}}\|u(x)\|^2_{L^{\frac{2q}{q-1}}
(B_{\lambda,u}^-(x_0))}\rr<\frac{1}{2}.
\end{align*}
Combing with \eqref{b345}, \eqref{b346} and \eqref{b347}, we have, for any $\lambda\in[\la_{x_0},\la_{x_0}+\delta_2]$,
$$\|Q^{u}_{x_0,\la}(x)\|_{L^{q}(B_{\lambda,u}^{-}(x_0))}
=\|Q^{v}_{x_0,\la}(x)\|_{L^{r}(B_{\lambda,v}^{-}(x_0))}
=\|Q^{w}_{x_0,\la}(x)\|_{L^{t}(B_{\lambda,w}^{-}(x_0))}=0$$
and hence
$$Q^{u}_{x_0,\la}(x)\geq0,\ Q^{v}_{x_0,\la}(x)\geq0,\ Q^{w}_{x_0,\la}(x)\geq0,\quad\ \forall x\in B_{\lambda}(x_0)\backslash\{x_0\}.$$
This contradicts the definition of $\la_{x_0}$. Therefore, we must have $\la_{x_0}=+\infty$.

As a consequence, from Lemma \ref{le10} (ii) and $u^{3}\in L^{1}(\mathbb{R}^{3})$, we can also derive $u\equiv0$, which contradicts the first and second equations of the $3$-D system \eqref{a1}. Then $\frac{2}{p}\leq\alpha<\frac{5}{p}$ is also impossible.

\medskip

From the contradictions derived in both cases 1 and 2, we conclude
\begin{equation}\label{348}
\alpha:=\frac{1}{2\pi^2}\int_{\mathbb{R}^{3}}u^{3}(x)\mathrm{d}x=\frac{5}{p}.
\end{equation}
From Step 1, we deduce that, for $\lambda>0$ large, there holds
\begin{equation}\label{d2}
Q^{u}_{x_0,\la}(x)\leq0,\ Q^{v}_{x_0,\la}(x)\leq0,\ Q^{w}_{x_0,\la}(x)\leq0 \qquad\ \text{in}\ B_{\lambda}(x_0)\backslash\{x_0\},
\end{equation}
as well as for $\lambda>0$ small,
\begin{equation}\label{d2+}
Q^{u}_{x_0,\la}(x)\geq0,\ Q^{v}_{x_0,\la}(x)\geq0,\ Q^{w}_{x_0,\la}(x)\geq0 \qquad\ \text{in}\ B_{\lambda}(x_0)\backslash\{x_0\}.
\end{equation}
If the critical scale (defined in \eqref{c10}) $\la_{x_0}<+\infty$, we must obtain $Q_{x_0,\lambda_{x_0}}^{u}(x)=Q_{x_0,\lambda_{x_0}}^{v}(x)=Q_{x_0,\lambda_{x_0}}^{w}(x)=0\ \text{in}\ B_{\lambda_{x_0}}(x_0)\backslash\{x_0\}$. Otherwise, the sphere can also be moved outward a bit further such that \eqref{d2+} still holds (see Case 1 or Case 2 in Step 2), which contradicts the definition \eqref{c10} of $\la_{x_0}$. If the critical scale (defined in \eqref{c10}) $\la_{x_0}=+\infty$, from \eqref{d2}, we get $Q^{u}_{x_0,\la}(x)=Q^{v}_{x_0,\la}(x)=Q^{w}_{x_0,\la}(x)=0\ \text{in}\ B_{\lambda}(x_0)\backslash\{x_0\}$ for $\la$ large enough. As a consequence, for arbitrary $x_{0}\in\mathbb{R}^{3}$, there exists a $\la>0$ depending on $x_0$ such that
$$Q^{u}_{x_0,\la}(x)=Q^{v}_{x_0,\la}(x)=Q^{w}_{x_0,\la}(x)=0,\qquad\forall x\in B_{\lambda}(x_0)\backslash\{x_0\}.$$
Thus, we deduce from Lemma \ref{le10} (i) that, for some $C_{1},C_{2}\in \mathbb{R}$, $\mu>0$ and $x_{0}\in \mathbb{R}^{3}$, $u,v$ must be of the form
\begin{equation}\label{351}u(x)=\frac{C_{1}\mu}{1+\mu^{2}|x-x_{0}|^{2}},\qquad \forall\ x\in \mathbb{R}^{3}\end{equation}
and
\begin{equation}\label{352}v(x)=\lr\frac{C_{2}\mu}{1+\mu^{2}|x-x_{0}|^{2}}\rr^{\frac{1}{2}},\qquad \forall\ x\in \mathbb{R}^{3},\end{equation}
which combines with the system \eqref{a1} and the asymptotic  behavior \eqref{d4} imply
\begin{equation}\label{353}w(x)=\frac{5}{2p}\ln\lr\frac{C_{3}\mu}{1+\mu^{2}|x-x_{0}|
^{2}}\rr,\qquad \forall\ x\in \mathbb{R}^{3}.\end{equation}
where $C_3>0$ is a constant. By direct calculations, \eqref{348} and \eqref{351}, we have
$C_1=\left(\frac{40}{p}\right)^{\frac{1}{3}}$ and then
$$u(x)=\frac{\left(\frac{40}{p}\right)^{\frac{1}{3}}\mu}{1+\mu^{2}|x-x_{0}|^{2}},\qquad \forall\ x\in \mathbb{R}^{3}.$$
From the asymptotic behavior \eqref{e4}, combing with \eqref{352}, we obtain
$C_2=\left(\frac{320}{p}\right)^{\frac{1}{6}}$ and thus
\begin{equation*}
v(x)=\lr\frac{\left(\frac{320}{p}\right)^{\frac{1}{6}}\mu}{1+\mu^{2}|x-x_{0}|^{2}}\rr
^{\frac{1}{2}},\qquad \forall\ x\in \mathbb{R}^{3}.\end{equation*}
By the asymptotic behavior \eqref{e3}, combing with \eqref{353}, we get
$C_3=\left(\frac{405}{p}\right)^{\frac{1}{15}}$ and hence
$$w(x)=\frac{5}{2p}\ln\lr\frac{\left(\frac{405}{p}\right)^{\frac{1}{15}}\mu}
{1+\mu^{2}|x-x_{0}|^{2}}\rr,\qquad \forall\ x\in \mathbb{R}^{3}.$$
This completes our proof of Theorem \ref{thm0}.

\section{Proof of Theorem \ref{thm1}}

In this section, we classify the classical solutions $(u,v,w)$ to the $4$-D system \eqref{eq41} and hence carry out our proof of Theorem \ref{thm1}.

\smallskip

Suppose that $(u,v,w)$ is a pair of classical solution to the system \eqref{eq41} with $u,v\geq0$.

\smallskip

First, we show the following integral representation formula for $v$.
\begin{lem} \label{lemm212}
Suppose that $\int_{\mathbb{R}^{4}}v^{4}(x)\mathrm{d}x<+\infty$. if $(u,v,w)$ is a solution of the problem \eqref{eq41}, then we have, for any $x\in \mathbb{R}^{4}$,
\begin{equation}\label{534q}
v(x)=\frac{1}{4\pi^2}\int_{\mathbb{R}^{4}}\frac{u^2(y)}{|x-y|^2}\mathrm{d}y.
\end{equation}
Furthermore, $v >0$ in $\mathbb{R}^{4}$, $v(x) \geq \frac{C}{|x|^2}$
for some constant $C > 0$ and $|x|$ large enough, and
\begin{equation}\label{535eq}
\int_{\mathbb{R}^{4}}\frac{u^2(x)}{|x|^2}\mathrm{d}x<+\infty.
\end{equation}
\end{lem}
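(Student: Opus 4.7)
The plan is to mirror the strategy used in the $3$-D analogue (Lemma \ref{le1} and Lemma \ref{le2}), but now for the classical Laplacian on $\mathbb{R}^{4}$, whose Green's function on the whole space is a multiple of $|x-y|^{-2}$ (the surface area of the unit sphere in $\mathbb{R}^{4}$ being $2\pi^{2}$, one checks that the correct normalization is exactly the constant $\frac{1}{4\pi^{2}}$ appearing in \eqref{534q}). Since the equation for $v$ in \eqref{eq41} is the local equation $-\Delta v = u^{2}$, classical maximum principle and Liouville theorem for harmonic functions replace Lemmas \ref{lem52} and \ref{lem53} used in the proof of Lemma \ref{le1}.

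The first step is to localize: for arbitrary $R>0$, introduce
\[
v_{R}(x):=\int_{B_{R}(0)}\widetilde{G}_{R}(x,y)\,u^{2}(y)\,\mathrm{d}y,
\]
where $\widetilde{G}_{R}(x,y)$ is the Dirichlet Green's function of $-\Delta$ on $B_{R}(0)\subset\mathbb{R}^{4}$. Then $v_{R}\in C^{2}(B_{R}(0))\cap C(\overline{B_{R}(0)})$ solves $-\Delta v_{R}=u^{2}$ in $B_{R}(0)$ with $v_{R}=0$ on $\partial B_{R}(0)$. The difference $v-v_{R}\geq 0$ on $\partial B_{R}(0)$ and is harmonic inside, so the classical maximum principle yields $v\geq v_{R}$ in $B_{R}(0)$. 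Letting $R\to+\infty$ and using monotone convergence gives
\[
v(x)\geq \tilde{v}(x):=\frac{1}{4\pi^{2}}\int_{\mathbb{R}^{4}}\frac{u^{2}(y)}{|x-y|^{2}}\,\mathrm{d}y,\qquad \forall\, x\in\mathbb{R}^{4}.
\]
Evaluating at $x=0$ gives $\int_{\mathbb{R}^{4}}u^{2}(y)/|y|^{2}\,\mathrm{d}y\leq 4\pi^{2}v(0)<+\infty$, which in particular shows that $\tilde v$ is well-defined and satisfies $-\Delta\tilde v=u^{2}$ in $\mathbb{R}^{4}$.

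Next, $\varphi:=v-\tilde v$ is a nonnegative harmonic function on $\mathbb{R}^{4}$, so the classical Liouville theorem forces $\varphi\equiv C$ for some constant $C\geq 0$. To rule out $C>0$ we invoke the finite total curvature hypothesis $\int_{\mathbb{R}^{4}}v^{4}\,\mathrm{d}x<+\infty$: if $C>0$, then $v\geq C$ everywhere and $\int_{\mathbb{R}^{4}}v^{4}\,\mathrm{d}x\geq \int_{\mathbb{R}^{4}}C^{4}\,\mathrm{d}x=+\infty$, a contradiction. Hence $C=0$ and \eqref{534q} follows; the integrability \eqref{535eq} was already obtained along the way.

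Finally, for the pointwise lower bound and strict positivity, restrict the integration to the annulus $1\leq |y|\leq |x|/2$; on this set $|x-y|\leq \tfrac{3}{2}|x|\leq \tfrac{3}{2}|x|\,|y|$, so $|x-y|^{-2}\geq \frac{4}{9|x|^{2}|y|^{2}}$, which gives
\[
v(x)\geq \frac{1}{9\pi^{2}|x|^{2}}\int_{1\leq |y|\leq |x|/2}\frac{u^{2}(y)}{|y|^{2}}\,\mathrm{d}y\geq \frac{C}{|x|^{2}}
\]
for all sufficiently large $|x|$, provided the integral over $\{|y|\geq 1\}$ is strictly positive. Strict positivity of $v$ in $\mathbb{R}^{4}$ follows from the strong maximum principle applied to the superharmonic function $v$ (since $-\Delta v=u^{2}\geq 0$): if $v$ vanished somewhere then $v\equiv 0$, forcing $u\equiv 0$ via $-\Delta v=u^{2}$, which contradicts the first equation of \eqref{eq41}. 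The only conceptual obstacle is to justify the exchange of limits when sending $R\to\infty$ in the Green-function representation; this is handled routinely by monotone convergence since $u^{2}\geq 0$ and $\widetilde{G}_{R}\uparrow \frac{1}{4\pi^{2}|x-y|^{2}}$ as $R\uparrow\infty$.
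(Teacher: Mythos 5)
Your proposal is correct and follows essentially the same route as the paper, which defers to the $3$-D argument of Lemma~\ref{le1}: localize via the Dirichlet Green's function of $-\Delta$ on $B_R(0)$, invoke the maximum principle to get $v\geq v_R$, pass to the limit, use the Liouville theorem to identify $v-\tilde v$ as a nonnegative constant, kill the constant via $v^4\in L^1(\mathbb{R}^4)$, and read off the lower bound from the representation formula. The only addition you make is an explicit strong-maximum-principle justification of $v>0$, which the paper leaves implicit.
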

\begin{proof}[{\sl Proof of Lemma~\ref{lemm212}}]
Similar to the proof of Lemma \ref{le2}, by using the maximum principle and Liouville theorem of $-\Delta$, we can conclude
\begin{equation}\label{535q}
v(x)=\frac{1}{4\pi^2}\int_{\mathbb{R}^{4}}\frac{u^2(y)}{|x-y|^2}\mathrm{d}y+C>C\geq0,
\end{equation}
where $C\geq0$ is a constant. From the finite total curvature condition $\int_{\mathbb{R}^{4}}v^{4}(x)\mathrm{d}x<+\infty$ and \eqref{535q}, we deduce $C=0$, which means that \eqref{534q} holds. Let $x=0$ in \eqref{535q}, we get the integrability in \eqref{535eq}, i.e.,
$$\int_{\mathbb{R}^{4}}\frac{u^2(y)}{|y|^2}\mathrm{d}y\leq4\pi^2 v(0)<+\infty.$$
Moreover, by the integral representation formula \eqref{534q} of $v$, for any $|x|$ sufficiently large, we have
\begin{equation*}
\begin{aligned}
v(x)&= \frac{1}{4\pi^2}\int_{\mathbb{R}^{4}}\frac{u^2(y)}{|x-y|^2}\mathrm{d}y \geq\frac{1}{4\pi^2}\int_{1\leq|y|<\frac{|x|}{2}}\frac{u^2(y)}{|x-y|^2}\mathrm{d}y\\
&\geq \frac{1}{9\pi^2|x|^2}\int_{1\leq|y|<\frac{|x|}{2}}\frac{u^2(y)}{|y|^2}\mathrm{d}y\geq \frac{1}{18\pi^2|x|^2}\int_{|y|\geq1}\frac{u^2(y)}{|y|^2}\mathrm{d}y\\
&:=\frac{C}{|x|^2},
\end{aligned}
\end{equation*}
which completes the proof of Lemma \ref{lemm212}.
\end{proof}

Next, we prove the integral representation formula for $u$.
\begin{lem} \label{le51}
Suppose that $\int_{\mathbb{R}^{4}}v^{4}(x)\mathrm{d}x<+\infty$, then we have, for any $x\in \mathbb{R}^{4}$,
\begin{equation}\label{eq51}
u(x)=\frac{1}{4\pi^2}\int_{\mathbb{R}^{4}}\frac{e^{pw(y)}}{|x-y|^{3}}\mathrm{d}y.
\end{equation}
Consequently, $u >0$ in $\mathbb{R}^{4}$, $u(x) \geq \frac{C}{|x|^{3}}$
for some constants $C> 0$ and $|x|$ large enough, and
\begin{equation}\label{eq52}
\int_{\mathbb{R}^{4}}\frac{e^{pw(x)}}{|x|^{3}}\mathrm{d}x<+\infty.
\end{equation}
\end{lem}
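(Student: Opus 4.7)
The plan is to mirror the argument used for Lemma \ref{le2} in the 3-D setting, replacing the Green's function of $(-\Delta)^{\frac12}$ on a ball in $\mathbb{R}^3$ by its $\mathbb{R}^4$ analog and the nonlinearity $e^{pw}$ in place of $v^4$. Note that the fundamental solution of $(-\Delta)^{\frac12}$ on $\mathbb{R}^4$ is $\frac{1}{4\pi^{2}|x|^{3}}$, which explains the prefactor in \eqref{eq51}.

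First, for each $R>0$, I would set $u_R(x):=\int_{B_R(0)} G_R^{(4)}(x,y)\,e^{pw(y)}\mathrm{d}y$, where $G_R^{(4)}$ is the Green's function of $(-\Delta)^{\frac12}$ on $B_R(0)\subset\mathbb{R}^4$ with zero exterior data (the formula is the natural four-dimensional analogue of the one used in Lemma \ref{le1}). Then $u_R$ is the unique classical solution of $(-\Delta)^{\frac12}u_R=e^{pw}$ in $B_R(0)$ with $u_R\equiv 0$ on $\mathbb{R}^4\setminus B_R(0)$. Applying Lemma \ref{lem52} to $\varphi_R:=u-u_R$ (which is $\frac12$-harmonic in $B_R(0)$ and nonnegative outside) yields $\varphi_R\geq 0$, and letting $R\to+\infty$ gives the pointwise inequality $u(x)\geq\tilde{u}(x):=\frac{1}{4\pi^2}\int_{\mathbb{R}^4}\frac{e^{pw(y)}}{|x-y|^{3}}\mathrm{d}y$.

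Next, set $\varphi(x):=u(x)-\tilde{u}(x)$. Since $\tilde{u}\in\mathcal{L}_1(\mathbb{R}^4)\cap C^{1,\epsilon}_{loc}(\mathbb{R}^4)$ solves $(-\Delta)^{\frac12}\tilde{u}=e^{pw}$, we have $(-\Delta)^{\frac12}\varphi=0$ and $\varphi\geq 0$ on $\mathbb{R}^4$, so the fractional Liouville theorem (Lemma \ref{lem53}) forces $\varphi\equiv C$ for some constant $C\geq 0$. The crucial step is to show $C=0$: this is where the previously established Lemma \ref{lemm212} comes in. Indeed, the integrability \eqref{535eq} tells us that $u^2/|x|^2\in L^1(\mathbb{R}^4)$, but if $C>0$ then $u\geq C$ would give
\[
\int_{\mathbb{R}^4}\frac{u^2(x)}{|x|^2}\mathrm{d}x\geq C^2\int_{|x|\geq 1}\frac{1}{|x|^{2}}\mathrm{d}x=+\infty,
\]
a contradiction. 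Hence $C=0$ and \eqref{eq51} is established.

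The remaining conclusions are now routine consequences of \eqref{eq51}. Taking $x=0$ yields $\int_{\mathbb{R}^4}\frac{e^{pw(y)}}{|y|^{3}}\mathrm{d}y=4\pi^2 u(0)<+\infty$, which is \eqref{eq52}. Finally, for $|x|$ sufficiently large, restricting the integral to $1\leq|y|\leq|x|/2$ (so that $|x-y|\leq\frac{3}{2}|x|$ and $|y|\leq|x-y|$) gives
\[
u(x)\geq\frac{1}{4\pi^2}\int_{1\leq|y|\leq|x|/2}\frac{e^{pw(y)}}{|x-y|^{3}}\mathrm{d}y
\geq\frac{C}{|x|^{3}}\int_{|y|\geq 1}\frac{e^{pw(y)}}{|y|^{3}}\mathrm{d}y,
\]
which, together with \eqref{eq52} and the fact that $e^{pw}>0$ on a set of positive measure, delivers both $u>0$ and the prescribed decay lower bound. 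I do not anticipate any serious obstacle beyond verifying the correct normalization of the four-dimensional Green's function $G_R^{(4)}$; the only genuinely new ingredient compared to the 3-D argument is the dimension-specific divergence of $\int_{|x|\geq 1}|x|^{-2}\mathrm{d}x$ in $\mathbb{R}^4$, which is precisely what makes the constant $C$ forced to be zero.
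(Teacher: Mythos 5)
Your proposal is correct and follows the same route as the paper: the Green's function comparison on balls $B_R(0)$ plus the maximum principle of Lemma \ref{lem52} give $u\geq\tilde u$, the fractional Liouville theorem of Lemma \ref{lem53} gives $u=\tilde u+C$, the integrability $\int_{\mathbb{R}^4}u^2/|x|^2\,\mathrm{d}x<+\infty$ from Lemma \ref{lemm212} forces $C=0$, and taking $x=0$ together with the kernel estimate on $\{1\leq|y|\leq|x|/2\}$ gives the remaining conclusions. (One cosmetic slip: in the final estimate the useful fact is $|y|\geq 1$, so that $|x-y|\leq\tfrac32|x|\leq\tfrac32|x||y|$, not the inequality $|y|\leq|x-y|$ you mention in passing; the stated bound is nonetheless correct.)
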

\begin{proof}[{\sl Proof of Lemma~\ref{le51}}]
Similar to the proof of Lemma \ref{le1}, by the maximum principle and the Liouville theorem for fractional Laplacian $(-\Delta)^{\frac{1}{2}}$ in Lemmas \ref{lem52} and \ref{lem53}, we can obtain
\begin{equation}\label{eq55}
u(x)=\frac{1}{4\pi^2}\int_{\mathbb{R}^{4}}\frac{e^{pw(y)}}{|x-y|^{3}}\mathrm{d}y+C> C\geq0.
\end{equation}
From the integrability $\int_{\mathbb{R}^{4}}\frac{u^2(x)}{|x|^2}\mathrm{d}x<+\infty$ in \eqref{535eq}, we derive that $C=0$ and hence the integral representation \eqref{eq51}. Taking $x=0$ in \eqref{eq55}, we arrive at
\begin{equation*}
\int_{\mathbb{R}^{4}}\frac{e^{pw(y)}}{|y|^{3}}\mathrm{d}y\leq 4\pi^2 u(0)<+\infty,
\end{equation*}
which gives the desired integrability of $w$ in \eqref{eq52}. Furthermore, by the integral representation formula \eqref{eq51} for $u$, we have, for any $|x|$ sufficiently
large,
\begin{equation*}
\begin{aligned}
u(x)&= \frac{1}{4\pi^2}\int_{\mathbb{R}^{4}}\frac{e^{pw(y)}}{|x-y|^{3}}\mathrm{d}y\geq
\frac{1}{4\pi^2}\int_{1\leq|y|<\frac{|x|}{2}}\frac{e^{pw(y)}}{|x-y|^{3}}\mathrm{d}y\\
&\geq \frac{2}{27|x|^{3}}\int_{1\leq|y|<\frac{|x|}{2}}\frac{e^{pw(y)}}{|y|^{3}}\mathrm{d}y\geq \frac{1}{27|x|^{3}}\int_{|y|\geq1}\frac{e^{pw(y)}}{|y|^{3}}\mathrm{d}y\\
&:=\frac{C}{|x|^{3}},
\end{aligned}
\end{equation*}
which completes the proof of Lemma \ref{le51}.
\end{proof}

Finally, we are to show the integral expression for $w(x)$. To this end, define
\begin{equation}\label{517}
q(x):=\frac{1}{8\pi^2}\int_{\mathbb{R}^{4}}\ln\left(\frac{|x-y|}{|y|}\right)v^{4}(y)\mathrm{d}y,
\end{equation}
and
\begin{equation}\label{e518}
\alpha:=\frac{1}{8\pi^2}\int_{\mathbb{R}^{4}}v^{4}(x)\mathrm{d}x.
\end{equation}
Since $\int_{\mathbb{R}^{4}}v^{4}(x)\mathrm{d}x<+\infty$ and $v\in C^2(\R^4)$, we can see that $q(x)$ and $\alpha$ are well-defined. Furthermore, $q(x)$ solves
\begin{equation}\label{518}
\Delta q(x)=\frac{1}{4\pi^2}\int_{\mathbb{R}^{4}}\frac{v^{4}(y)}{|x-y|^{2}}\mathrm{d}y
\end{equation}
and
\begin{equation*}
\Delta^2 q(x)=-v^4(x).
\end{equation*}
\begin{lem}\label{lem59}
Suppose that $\int_{\mathbb{R}^{4}}v^{4}(x)\mathrm{d}x<+\infty$, then there is a constant $C$ such that
\begin{equation}\label{521}
q(x)\leq\alpha\ln|x|+C.
\end{equation}
\begin{proof}[{\sl Proof of Lemma~\ref{lem59}}]
The proof of this Lemma is similar to that of Lemma \ref{lem1}, we omit it here.
\end{proof}
\end{lem}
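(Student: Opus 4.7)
The plan is to mirror the argument in Lemma~\ref{lem1}, modifying only what the sign change and the change of dimension require. First I would split $\mathbb{R}^{4}=A_{1}\cup A_{2}$, where
$$A_{1}:=\left\{y\in\mathbb{R}^{4}\,\Big|\,|y-x|\leq\tfrac{|x|}{2}\right\},\qquad A_{2}:=\left\{y\in\mathbb{R}^{4}\,\Big|\,|y-x|\geq\tfrac{|x|}{2}\right\}.$$
For $y\in A_{1}$ the triangle inequality gives $|y|\geq|x|-|x-y|\geq|x|/2\geq|x-y|$, so $\ln(|x-y|/|y|)\leq 0$. Thus the contribution of $A_{1}$ to $q(x)$ is non-positive and can be dropped from above.

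Next I would handle $A_{2}$ by splitting once more according to the size of $|y|$. For $y\in A_{2}\cap\{|y|\geq 2\}$, I would use $|x-y|\leq|x|+|y|$ together with $|x|/|y|\leq|x|/2$, which for $|x|\geq 2$ yields
$$\frac{|x-y|}{|y|}\leq 1+\frac{|x|}{|y|}\leq|x|,\qquad\text{hence}\quad \ln\!\left(\frac{|x-y|}{|y|}\right)\leq\ln|x|.$$
Integrating against $v^{4}$ produces a term bounded by $\alpha\ln|x|$, which is precisely the leading term on the right-hand side of \eqref{521}. For $y\in A_{2}\cap\{|y|\leq 2\}$, I would estimate $\ln|x-y|\leq\ln(|x|+2)\leq\ln|x|+\ln 2$ and control $-\ln|y|$ by noting that $v\in C^{2}(\mathbb{R}^{4})$ is locally bounded, so
$$\int_{A_{2}\cap\{|y|\leq 2\}}\big|\ln|y|\big|\,v^{4}(y)\,\mathrm{d}y\leq\|v\|_{L^{\infty}(B_{2}(0))}^{4}\int_{|y|\leq 2}\big|\ln|y|\big|\,\mathrm{d}y<+\infty,$$
and the $\ln(|x|+2)\cdot\mathrm{vol}$-piece of this small region contributes a term of the form $C\ln|x|$ multiplied by $\int_{|y|\leq 2}v^{4}\leq 8\pi^{2}\alpha$, which only refines the constant.

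Combining the three pieces yields $q(x)\leq\alpha\ln|x|+C$ for all $|x|\geq 2$, and the inequality for $|x|\leq 2$ follows by absorbing the (finite) supremum of $q$ on $\overline{B_{2}(0)}$ into the constant $C$ (note that $q$ is continuous on $\mathbb{R}^{4}$ since $v^{4}\in L^{1}(\mathbb{R}^{4})$ and $\ln|\cdot|$ is locally integrable in $\mathbb{R}^{4}$). The only mildly delicate point, analogous to the $3$-D situation, is the logarithmic singularity at $y=0$; this is harmless because $v\in C^{2}(\mathbb{R}^{4})$ is locally bounded and $\ln|y|$ is locally integrable. No new estimate is required beyond those already used in Lemma~\ref{lem1}.
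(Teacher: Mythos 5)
Your proof is correct and follows exactly the approach the paper intends: the paper omits the argument as ``similar to that of Lemma~\ref{lem1},'' and you reproduce that $3$-D argument faithfully for the $4$-D setting, using the same decomposition $\mathbb{R}^{4}=A_{1}\cup A_{2}$, dropping the non-positive contribution from $A_{1}$, and extracting the $\alpha\ln|x|$ leading term from $A_{2}$. The only changes required are the dimension, the replacement of $u^{3}$ by $v^{4}$, and the normalizing constant, which is precisely what the paper's remark indicates.
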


Now, we derive the integral representation formula of $\Delta w$.
\begin{lem}\label{lem510}
Suppose that $\int_{\mathbb{R}^{4}}v^{4}(x)\mathrm{d}x<+\infty$, then $\Delta w(x)$ satisfies
\begin{equation}\label{522}
\Delta w(x)=-\frac{1}{4\pi^2}\int_{\mathbb{R}^{4}}\frac{v^{4}(y)}{|x-y|^{2}}\mathrm{d}y-C
\end{equation}
for some constant $C\geq0$.
\end{lem}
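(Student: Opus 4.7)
My plan is to transpose the three-dimensional argument of Lemma \ref{lem2} to four dimensions, using the auxiliary comparison function $m(x):=w(x)+q(x)$, with $q$ as in \eqref{517}. A direct computation (using $\Delta\ln|x-y|=2/|x-y|^{2}$ in $\R^{4}$ together with $-\Delta(1/4\pi^{2}|x-y|^{2})=\delta(x-y)$) gives $\Delta^{2}q=-v^{4}$, while the third equation of \eqref{eq41} yields $\Delta^{2}w=(-\Delta)^{2}w=v^{4}$. Hence $\Delta^{2}m\equiv 0$ in $\R^{4}$, so that $\Delta m$ is a harmonic function on the whole of $\R^{4}$.

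Next, I would exploit the mean value property of $\Delta m$. In $\R^{4}$ one has $|B_{r}|=\pi^{2}r^{4}/2$ and $|\partial B_{r}|=2\pi^{2}r^{3}$, so the divergence theorem (applied exactly as in \eqref{dd219}) produces
\begin{equation*}
\Delta m(x_{0})\;=\;\frac{2}{\pi^{2}r^{4}}\int_{\partial B_{r}(x_{0})}\frac{\partial m}{\partial r}\,\mathrm{d}\sigma\;=\;\frac{4}{r}\bar m'(r),\qquad\bar m(r):=\bbint_{\partial B_{r}(x_{0})}m(y)\,\mathrm{d}\sigma.
\end{equation*}
Integrating from $0$ to $r$ gives $\bar m(r)-m(x_{0})=\tfrac{r^{2}}{8}\Delta m(x_{0})$, after which Jensen's inequality furnishes
\begin{equation*}
e^{\frac{pr^{2}}{8}\Delta m(x_{0})}\;\leq\;e^{-pm(x_{0})}\bbint_{\partial B_{r}(x_{0})}e^{pm(y)}\,\mathrm{d}\sigma.
\end{equation*}

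The crucial step is to marry this identity to the available integrability of $e^{pw}$. By Lemma \ref{lem59}, $q(x)\leq\alpha\ln|x|+C$, so $m(x)\leq w(x)+\alpha\ln|x|+C$ and hence $e^{pm(x)}\leq C|x|^{p\alpha}e^{pw(x)}$. Since $|y|\leq 2r$ on $\partial B_{r}(x_{0})$ for $r\gg|x_{0}|$, the previous inequality becomes
\begin{equation*}
e^{\frac{pr^{2}}{8}\Delta m(x_{0})}\;\leq\;C_{x_{0}}\,r^{p\alpha}\bbint_{\partial B_{r}(x_{0})}e^{pw(y)}\,\mathrm{d}\sigma.
\end{equation*}
Multiplying by $r^{-p\alpha}$ and integrating over $r\in[1,+\infty)$, the right-hand side is bounded, via four-dimensional spherical coordinates, by the quantity $\int_{\R^{4}}|y|^{-3}e^{pw(y)}\,\mathrm{d}y$, which is finite by Lemma \ref{le51}. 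Therefore $r^{-p\alpha}e^{\frac{pr^{2}}{8}\Delta m(x_{0})}\in L^{1}_{r}([1,+\infty))$, and this forces $\Delta m(x_{0})\leq 0$ for every $x_{0}\in\R^{4}$, because otherwise the factor $e^{cr^{2}}$ would overwhelm any polynomial weight.

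Finally, $\Delta m$ is a harmonic function on $\R^{4}$ with $(\Delta m)^{+}\equiv 0$, so Lemma \ref{lemma214} (iii) (applied to $-\Delta m$) gives $\Delta m\equiv -C$ for some constant $C\geq 0$. Combined with $\Delta w=\Delta m-\Delta q$ and the identity \eqref{518}, this yields exactly \eqref{522}. The main point requiring care is the bookkeeping that relates the $x_{0}$-centered spherical averages of $e^{pw}$ to the origin-centered integrability $\int_{\R^{4}}|y|^{-3}e^{pw(y)}\,\mathrm{d}y<+\infty$; all other steps are a direct four-dimensional transcription of the proof of Lemma \ref{lem2}.
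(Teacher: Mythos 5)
Your argument reproduces the paper's proof essentially verbatim: you form $m=w+q$, deduce $\Delta m$ is harmonic from $\Delta^2 q=-v^4$ and $\Delta^2 w=v^4$, derive the mean-value/Jensen inequality $e^{\frac{pr^2}{8}\Delta m(x_0)}\leq e^{-pm(x_0)}\bbint_{\partial B_r(x_0)}e^{pm}\,\mathrm{d}\sigma$, bound $e^{pm}$ by $C|x|^{p\alpha}e^{pw}$ via Lemma \ref{lem59}, use the integrability $\int_{\R^4}|y|^{-3}e^{pw}\,\mathrm{d}y<\infty$ from Lemma \ref{le51} to conclude $\Delta m\leq0$, and finish by Liouville together with \eqref{518}. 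The only cosmetic slip is saying Lemma \ref{lemma214}(iii) is "applied to $-\Delta m$": you should apply it to $\Delta m$ itself (whose positive part vanishes, hence is $o(|x|)$), or simply invoke the classical one-sided Liouville theorem as the paper does.
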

\begin{proof}[{\sl Proof of Lemma~\ref{lem510}}]
Let $m(x)=w(x)+q(x)$, by the $4$-D system \eqref{eq41}, we obtain
$$\Delta^{2} m(x)=0,$$
i.e., $\Delta m$ is a harmonic function in $\R^4$. By the mean value theorem of harmonic function, for any $x_0\in\R^4$ and $r>0$, we get
\begin{align}\label{dd417}
\Delta m(x_0)&=\frac{2}{\pi^2r^4}\int_{|y-x_0|\leq r}\Delta m(y)\mathrm{d}y\\
&=\frac{2}{\pi^2r^4}\int_{|y-x_0|= r}\frac{\partial m}{\partial r}\mathrm{d}\sigma\nonumber.
\end{align}
Integrating the equation \eqref{dd417} from $0$ to $r$, we have
\begin{align*}
\frac{r^2}{8}\Delta m(x_0)&=\bbint_{|y-x_0|= r}m(y)\mathrm{d}\sigma-m(x_0).
\end{align*}
Using the Jensen's inequality, we derive
\begin{align}\label{dd418}
e^{\frac{pr^2}{8}\Delta m(x_0)}&=e^{-pm(x_0)}e^{\bbint_{|y-x_0|= r}pm(y)\mathrm{d}\sigma}\\
&\leq e^{-pm(x_0)}\bbint_{|y-x_0|= r}e^{pm(y)}\mathrm{d}\sigma\nonumber.
\end{align}
From the inequality \eqref{521}, we obtain $m(x)=w(x)+q(x)\leq w(x)+\alpha\ln|x|+C$. Combining this with \eqref{dd418} and the integrability $|x|^{-3}e^{pw(x)}\in L^1(\R^4)$ in \eqref{eq52}, we get
$$r^{-p\alpha}e^{\frac{pr^2}{8}\Delta m(x_0)}\in L^1\left([1,+\infty)\right).$$
Thus $\Delta m(x_0)\leq0 \,\, \text{for all} \ x_0\in\R^4$. By Liouville theorem, $\Delta m(x)=-C$ in $\R^4$ for some constant $C\geq0$, which combines with \eqref{518} imply \eqref{522}. This finishes the proof of Lemma \ref{lem510}.
\end{proof}

Furthermore, we can deduce the constant $C$ in \eqref{522} equals to zero.
\begin{lem}\label{lem45}
Assume $w(x)=o(|x|^{2})$ at $\infty$. Then we have
\begin{equation}\label{dd419}
\Delta w(x)=-\frac{1}{4\pi^2}\int_{\mathbb{R}^{4}}\frac{v^{4}(y)}{|x-y|^{2}}\mathrm{d}y.
\end{equation}
\end{lem}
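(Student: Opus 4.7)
The plan is to mirror the proof of the three-dimensional analog (Lemma \ref{cor13}), which already handles the same type of statement. Lemma \ref{lem510} has done most of the work: we know that
$$\Delta w(x) = -\frac{1}{4\pi^2}\int_{\mathbb{R}^4}\frac{v^4(y)}{|x-y|^2}\mathrm{d}y - C$$
for some constant $C \geq 0$, so the only task is to rule out the possibility $C>0$. I would argue by contradiction: assuming $C > 0$, I will construct a superharmonic barrier on the exterior of a large ball whose behavior forces an interior minimum, contradicting the strong minimum principle.

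Concretely, since $w(x)=o(|x|^2)$ at $\infty$, fix $\varepsilon \in (0, C/16)$ and choose $R_0 \gg 1$ so that $\Delta w(x) \leq -C$ throughout $\mathbb{R}^4$ and $w(y)+\varepsilon|y|^2 \geq \frac{\varepsilon}{2}|y|^2 \geq 0$ for $|y|\geq R_0$. Define the barrier
$$n(y) := w(y) + \varepsilon|y|^2 + B\bigl(|y|^{-2} - R_0^{-2}\bigr),$$
where $B>0$ is a parameter to be adjusted. The crucial dimension-dependent tweak from the 3D argument is to replace the harmonic function $|y|^{-1}$ (used in Lemma \ref{cor13}) with its four-dimensional counterpart $|y|^{-2}$, which is harmonic in $\mathbb{R}^4 \setminus \{0\}$. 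Since $\Delta(\varepsilon|y|^2) = 8\varepsilon$ in $\mathbb{R}^4$, we obtain
$$\Delta n(y) = \Delta w(y) + 8\varepsilon < -\tfrac{C}{2} < 0 \qquad \text{for } |y|\geq R_0.$$

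Now observe two competing asymptotic behaviors: for any fixed $B>0$, $\lim_{|y|\to\infty} n(y) = +\infty$ because the $\varepsilon|y|^2$ term dominates, while for any fixed $y$ with $|y|>R_0$, $\lim_{B\to+\infty} n(y) = -\infty$ because $|y|^{-2}-R_0^{-2}<0$. On the boundary $\partial B_{R_0}(0)$, $n\geq 0$ uniformly in $B$. Therefore, choosing $B$ sufficiently large, the infimum of $n$ over $\{|y|\geq R_0\}$ is attained at some interior point $y_0$ with $|y_0|>R_0$. But this contradicts the strong minimum principle applied to the strictly superharmonic function $n$ on this exterior domain.

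The step I expect to be the most delicate is verifying that the barrier choice genuinely produces the described interior infimum; specifically one must check that, along $\partial B_{R_0}$ and at infinity, the bounds are uniform in $B$ and $\varepsilon$ in a compatible way so that the limiting behavior as $B\to\infty$ is not obstructed. Once this is set up, the contradiction is immediate, forcing $C=0$ and hence \eqref{dd419}.
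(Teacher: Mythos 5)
Your proof is correct and follows essentially the same route as the paper: invoke Lemma \ref{lem510} to reduce to ruling out $C>0$, then build the exterior barrier $n(y)=w(y)+\varepsilon|y|^2+B(|y|^{-2}-R_0^{-2})$ with $\varepsilon\in(0,C/16)$, note that $\Delta n<-C/2$ on $\{|y|\geq R_0\}$, and choose $B$ large enough to force an interior minimum, contradicting the strong minimum principle. The only cosmetic difference is the name of the parameter ($B$ versus $D$); the dimension-dependent adjustments you flag (using $|y|^{-2}$ instead of $|y|^{-1}$ and $8\varepsilon$ instead of $6\varepsilon$) are exactly what the paper does, and the ``delicate step'' you single out is in fact routine since $n\geq 0$ on $\partial B_{R_0}$ independently of $B$.
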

\begin{proof}[{\sl Proof of Lemma~\ref{lem45}}]
First, by Lemma \ref{lem510}, we have
\begin{equation*}
\Delta w(x)=-\frac{1}{4\pi^2}\int_{\mathbb{R}^{4}}\frac{v^{4}(y)}{|x-y|^{2}}\mathrm{d}y-C.
\end{equation*}
If $C=0$, we directly obtain \eqref{dd419}. If $C>0$, take $\var\in(0,\frac{C}{16})$ and $R_{0}$ sufficiently large such that
\begin{align*}
w(y)+\var|y|^2\geq\frac{\var}{2}|y|^2\geq0 \quad\, \text{for}\,\,  |y|\geq R_0 \qquad \text{and}\qquad
\Delta w(x)\leq-C<0 \,\quad \text{in} \,\, \R^4.
\end{align*}
Define
$$n(y)=w(y)+\var|y|^2+D(|y|^{-2}-R_0^{-2}).$$
So one has
\begin{align}\label{dd420}
\Delta n(y)=\Delta w(y)+8\var<-\frac{C}{2}<0 \qquad \text{for} \,\,  |y|\geq R_0.
\end{align}
Furthermore, we have
\begin{align*}
&\lim_{|y|\to\infty}n(y)=+\infty,\quad\text{for any}\ D>0,\\
&\lim_{D\to+\infty}n(y)=-\infty,\quad\text{for any}\ y\in\R^4\backslash\overline{B_{R_0}(0)}.
\end{align*}
Consequently, we can take $D$ sufficiently large such that $\inf\limits_{|y|\geq R_0}n(y)$ is obtained by some $y_0\in\R^4$ with $|y_0|>R_0$. From the maximum principle to \eqref{dd420}, we derive a contradiction which implies $C=0$.
\end{proof}

\smallskip

In order to derive the integral equation for $w$, we need to estimate the asymptotic behavior of $v$, which is one of the main key ingredients in our proof. In fact, by exploiting a bi-harmonic version (in Lemma 2.3 of \cite{Lin}) of the Brezis-Merle's estimate in \cite{BF} and elliptic estimates, we can deduce from either the assumption $(H_{1})$ or the assumption $(H_{2})$ that $v(x)=O\left(|x|^{2K}\right)$ at $\infty$ for some $K\gg1$ arbitrarily large.
\begin{lem} \label{lem46}
Assume $\int_{\mathbb{R}^{4}}v^{4}(x)\mathrm{d}x<+\infty$, then we have\\
(i)\ if $\int_{\mathbb{R}^{4}}e^{\Lambda pw(y)}\mathrm{d}y<+\infty$ for some $\Lambda>0$, then we have $v=O(|x|^{6})$ as $|x|\rightarrow+\infty$; \\
(ii)\ if $u=O(|x|^{K})$ at $\infty$ for some $K\gg1$ arbitrarily large, then we have $v=O(|x|^{2K})$ as $|x|\rightarrow+\infty$.
\end{lem}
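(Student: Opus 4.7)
The plan is to prove (ii) by direct estimation using the integral representation of $v$ from Lemma \ref{lemm212}, and to reduce (i) to (ii) by first establishing polynomial growth of $u$ via a biharmonic Brezis--Merle argument applied to the equation $\Delta^2 w = v^4$.

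For part (ii), I would exploit the representation $v(x) = \frac{1}{4\pi^2}\int_{\R^4} u^2(y)|x-y|^{-2}\,\mathrm{d}y$ and split the integration domain into three pieces: $A_1 = \{|y|<|x|/2\}$, $A_2 = \{|x-y|<|x|/2\}$, and $A_3 = \{|y|>3|x|/2\}$. On $A_1$ the kernel is bounded by $4/|x|^2$ while $\int_{A_1} u^2\,\mathrm{d}y \leq (|x|/2)^2 \int u^2/|y|^2\,\mathrm{d}y$ by \eqref{535eq}, so this contribution is $O(1)$. On $A_3$ the bound $|x-y|\geq |y|/3$ combined with \eqref{535eq} and absolute continuity gives a vanishing contribution. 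On $A_2$, the hypothesis $u = O(|y|^K)$ gives $u^2(y) \leq C|x|^{2K}$ uniformly, while $\int_{B_{|x|/2}(x)}|x-y|^{-2}\,\mathrm{d}y = O(|x|^2)$, producing the dominant term $O(|x|^{2K+2})$. Since $K$ is arbitrarily large, this matches the claimed form $O(|x|^{2K'})$ after relabeling.

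For part (i), apply the same three-region split to $u(x) = \frac{1}{4\pi^2}\int_{\R^4} e^{pw(y)}|x-y|^{-3}\,\mathrm{d}y$ from Lemma \ref{le51}. The $A_1$ and $A_3$ contributions are $O(1)$ thanks to \eqref{eq52}, using the rewriting $e^{pw}(y) = \bigl(e^{pw}/|y|^3\bigr)(y)\cdot|y|^3$ on $A_1$ and $|x-y|\geq |y|/3$ on $A_3$. The delicate piece is $A_2$; by H\"older's inequality with some fixed $q > 4$ (so $q' = q/(q-1) < 4/3$),
\begin{equation*}
\int_{A_2} \frac{e^{pw(y)}}{|x-y|^3}\,\mathrm{d}y \leq \|e^{pw}\|_{L^q(B_{|x|/2}(x))} \cdot C\,|x|^{1-4/q}.
\end{equation*}
Hence it suffices to bound $\|e^{pw}\|_{L^q(B_1(x_0))}$ by some fixed power of $|x_0|$ uniformly as $|x_0|\to\infty$ for a single $q > 4$. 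Once $u$ is polynomially bounded, part (ii) closes the argument.

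The required local $L^q$ control of $e^{pw}$ at infinity will come from Lin's biharmonic Brezis--Merle estimate (Lemma 2.3 of \cite{Lin}), applied to $\Delta^2 w = v^4$ with $v^4 \in L^1(\R^4)$. Since $\int_{B_1(x_0)} v^4\,\mathrm{d}y \to 0$ as $|x_0|\to\infty$ by absolute continuity of the finite measure $v^4\,\mathrm{d}y$, the estimate gives $e^{\tau w}\in L^1(B_{1/2}(x_0))$ with $\tau$ arbitrarily large once $|x_0|$ is large enough, with quantitative control involving only the local $L^1$ mass of $v^4$ and an additive harmonic correction (which in turn is controlled by elliptic regularity applied to the explicit Poisson-type representation \eqref{dd419} for $\Delta w$). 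On the complementary bounded set of $x_0$'s, the global hypothesis $\int e^{\Lambda pw}\,\mathrm{d}y < \infty$ together with interior elliptic estimates supplies $L^q_{\mathrm{loc}}$ control. The main obstacle I anticipate is splicing together these two ingredients — the small-$\Lambda$ global exponential integrability and the arbitrarily-high-exponent local-at-infinity integrability — into a single uniform polynomial bound in $|x|$ when the given $\Lambda$ is small; this will require a careful H\"older interpolation and possibly an iteration to upgrade integrability on an intermediate annulus before the Brezis--Merle regime takes over.
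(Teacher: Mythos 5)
Your broad plan for part (i)---use Lin's biharmonic Brezis--Merle estimate on $\Delta^2 w = v^4$ to obtain local exponential integrability of $w$ at infinity, then feed this into the integral representation of $u$ via H\"older, and finally close via (ii)---is essentially the paper's route, and your part (ii) is a workable variant (the paper uses the equation $-\Delta v = u^2$ and interior elliptic estimates, whereas you use the representation formula directly, losing a factor $|x|^2$ which is harmless since $K$ is arbitrary). However, there is a genuine gap in your treatment of part (i), in how you pass from the Brezis--Merle estimate (which controls a function $g$ with \emph{zero} boundary data) to control of $w$ itself.

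Concretely, Lin's lemma controls $\int e^{\tau|g|}$ where $g$ solves $\Delta^2 g = v^4$ on $B(x,4)$ with $g = \Delta g = 0$ on $\partial B(x,4)$, \emph{not} $e^{\tau w}$. Writing $w = g + h$ with $h$ biharmonic, you correctly observe that $\Delta h$ can be bounded from above using the integral representation \eqref{dd419} of $\Delta w$ together with Harnack and the mean value property. But a one-sided $L^\infty$ bound on $\Delta h$ alone does not control $\sup h$: the interior estimate for $\Delta h = -\bar h$ reads
\begin{equation*}
\sup_{B(x,1)} h \leq C\left(\|h^+\|_{L^k(B(x,2))} + \|\bar h\|_{L^\infty(B(x,2))}\right),
\end{equation*}
and you still need the $L^k$ term. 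Since $h^+ \leq w^+ + |g|$, the bound on $\|h^+\|_{L^k}$ requires a uniform (in $x$) bound on $\|w^+\|_{L^k(B(x,2))}$; this is \emph{exactly} where the hypothesis $\int_{\R^4} e^{\Lambda p w}\,\mathrm{d}y < +\infty$ enters, via $(w^+)^k \lesssim e^{\Lambda p w^+}$. Your proposal instead relegates the exponential hypothesis to ``the complementary bounded set of $x_0$'s'' and worries about a splicing/interpolation problem between the two regimes; there is no such problem, because the hypothesis is needed (and suffices) uniformly for all large $|x|$ to bound the $L^k$ norm of $h^+$. Once this is in place, $\sup_{B(x,1)} w \leq C + g$ and H\"older on the unit ball $B(x,1)$ (with the $|x-y|\geq 1$ part handled separately, avoiding your estimate on the large ball $B_{|x|/2}(x)$) gives $u = O(|x|^3)$ directly, after which your part (ii) finishes. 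A secondary minor issue: your three-region split $A_1\cup A_2\cup A_3$ misses the annular region $\{|x|/2 \le |y| \le 3|x|/2\}\cap\{|x-y|\ge|x|/2\}$, though this is easily repaired since there both $|y|$ and $|x-y|$ are comparable to $|x|$ and the integrability \eqref{535eq} applies.
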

Before proving the key Lemma \ref{lem46}, we need to recall a bi-harmonic version of Brezis-Merle type estimate in \cite{BF}, for which refer to \cite{Lin}.
\begin{lem}[\textbf{\cite{Lin}}] \label{lem47}
Suppose that $g(x)$ satisfies
\begin{equation*}
\begin{cases}
\Delta^2g(y)=f(y)\quad &\text{in}\ B_R\subset\R^4,\\
g(y)=\Delta g(y)=0\quad &\text{on}\ \partial B_R,
\end{cases}
\end{equation*}
where $f\in L^1(B_R)$. Then, for any $\delta\in(0,32\pi^2)$, there exists $C_\delta>0$ such that
$$\int_{B_R}e^{\frac{\delta |g|}{\|f\|_{L^1}}}\mathrm{d}x\leq C_\delta R^4.$$
\end{lem}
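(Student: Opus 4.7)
The plan is to represent $g$ through the Green's function for the bi-Laplacian with Navier boundary data on $B_R$, apply Jensen's inequality to the exponential, and finally evaluate the resulting integral exploiting the logarithmic nature of the Green's function singularity in four dimensions. Let $G_4(x,y)$ denote this Green's function, so that $\Delta_x^2 G_4(\cdot,y)=\delta_y$ with $G_4=\Delta_x G_4=0$ on $\partial B_R$, and $G_4\geq 0$. By uniqueness, $g(x)=\int_{B_R} G_4(x,y)f(y)\,\mathrm{d}y$.

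The main step is the sharp pointwise upper bound
$$0\leq G_4(x,y)\leq \frac{1}{8\pi^2}\ln\frac{CR}{|x-y|}+C_0, \qquad \forall\,x,y\in B_R,$$
with absolute constants $C,C_0$. The fundamental solution of $\Delta^2$ in $\R^4$ is $F(z)=-\frac{1}{8\pi^2}\ln|z|$: indeed $\Delta(\ln|z|)=2|z|^{-2}$ in $\R^4$, so $-\Delta F=\frac{1}{4\pi^2|z|^2}$, and the latter is the fundamental solution of $-\Delta$. One then decomposes $G_4(x,y)=F(x-y)+H(x,y)$, where $H(\cdot,y)$ is the bi-harmonic correction on $B_R$ with Navier boundary data $H=-F(x-y)$ and $\Delta H=-\Delta F(x-y)$ on $\partial B_R$. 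Factoring $\Delta^2 H=0$ as the coupled system $-\Delta H=\eta$, $-\Delta\eta=0$, and applying the maximum principle to each Poisson problem in turn, controls $H$ uniformly on $B_R\times B_R$ by $\frac{1}{8\pi^2}\ln(2R)+C$, which combines with the singular part $F(x-y)$ to produce the displayed estimate.

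With the Green's function bound in hand, the next step is Jensen's inequality. Writing $\phi(y):=|f(y)|/\|f\|_{L^1}$ as a probability density on $B_R$, from $|g(x)|\leq \int_{B_R} G_4(x,y)|f(y)|\,\mathrm{d}y$ and the convexity of $t\mapsto e^{t}$ one obtains
$$e^{\delta|g(x)|/\|f\|_{L^1}}\leq \int_{B_R} e^{\delta G_4(x,y)}\phi(y)\,\mathrm{d}y.$$
Integrating over $x\in B_R$ and applying Fubini reduces the problem to bounding $\int_{B_R} e^{\delta G_4(x,y)}\,\mathrm{d}x$ uniformly in $y\in B_R$.

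Finally, inserting the Green's function estimate yields
$$\int_{B_R} e^{\delta G_4(x,y)}\,\mathrm{d}x \leq e^{\delta C_0}\int_{B_{2R}(y)}\lr\frac{CR}{|x-y|}\rr^{\delta/(8\pi^2)}\mathrm{d}x,$$
which, after polar coordinates centered at $y$, is finite precisely when $\delta/(8\pi^2)<4$, i.e., $\delta<32\pi^2$, and is then bounded by $C_\delta R^4$ with $C_\delta\to+\infty$ as $\delta\uparrow 32\pi^2$. Combining with the Jensen step completes the proof. The main obstacle is the sharp Green's function bound in the second paragraph: the constant $\frac{1}{8\pi^2}$ in front of the logarithm is exactly what produces the threshold $32\pi^2$ via the condition $\delta/(8\pi^2)<4$, so any weaker bound would shrink the admissible range of $\delta$ and break the criticality of the constant appearing in the statement.
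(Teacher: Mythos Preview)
The paper does not supply its own proof of this lemma; it is quoted verbatim from Lin \cite{Lin} (Lemma~2.3 there), where it serves as the bi-harmonic analogue of the Brezis--Merle estimate. Your argument---Green's function representation, the sharp logarithmic bound $G_4(x,y)\leq\frac{1}{8\pi^2}\ln\frac{CR}{|x-y|}+C_0$ coming from the fundamental solution of $\Delta^2$ in $\R^4$, Jensen's inequality against the probability measure $|f|/\|f\|_{L^1}$, and the final radial integration convergent precisely for $\delta/(8\pi^2)<4$---is exactly the standard proof that appears in \cite{Lin}, itself modeled on the original Brezis--Merle argument for $-\Delta$ in $\R^2$. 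Your proposal is correct and matches the reference.
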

\begin{proof}[{\sl Proof of Lemma~\ref{lem46}}]
(i)\ If $\int_{\mathbb{R}^{4}}e^{\Lambda pw(y)}\mathrm{d}y<+\infty$ for some $\Lambda>0$, let $g(x)$ solves
\begin{equation*}
\begin{cases}
\Delta^2g(y)=v^4(y)\quad &\text{in}\ B(x,4),\\
g(y)=\Delta g(y)=0\quad &\text{on}\ \partial B(x,4).
\end{cases}
\end{equation*}
By the finite total curvature condition $\int_{\mathbb{R}^{4}}v^{4}(x)\mathrm{d}x<+\infty$, Lemma \ref{lem47} implies that there exists $R\geq2$ large enough such that, $\int_{B(x,4)}e^{7p|g|}\mathrm{d}x\leq C$ for all $x\in \mathbb{R}^{4}\setminus B_R(0)$, where $C$ is a constant independent of $x$.

For $|x|\geq R$, let $h(y)=w(y)-g(y)$ in $B(x,4)$, then $h(y)$ solves
\begin{equation}\label{a422}
\begin{cases}
\Delta^2h(y)=0\quad &\text{in}\ B(x,4),\\
h(y)=w(y),\ \Delta h(y)=\Delta w(y)\quad &\text{on}\ \partial B(x,4).
\end{cases}
\end{equation}
Take $\bar h(y)=-\Delta h(y)$, then $\Delta \bar h(y)=0$ in $B(x,4)$. Thus for any $y\in B(x,2)$, by the mean value theorem and the Harnack inequality of harmonic functions, we obtain from \eqref{a422} that
\begin{equation}\label{423d}
\bar h(y)\leq C_1 \bar h(x)=C_1\bbint_{\partial B(x,4)}\bar h(y)\mathrm{d}\sigma=-C_1\bbint_{\partial B(x,4)}\Delta h(y)\mathrm{d}\sigma=-C_1\bbint_{\partial B(x,4)}\Delta w(y)\mathrm{d}\sigma,
\end{equation}
where $C_1$ is a constant independent of $x$ and $y$. Next, we estimate integral $-\bbint_{\partial B(x,4)}\Delta w(y)\mathrm{d}\sigma$. By $4$-D system \eqref{eq41}, $w(y)$ satisfies
\begin{equation}\label{424d}(-\Delta)^2w(y)=v^4(y)\quad\text{in}\ \R^4.\end{equation}
Integrating equation \eqref{424d} on $B(x,r)$ with $r>0$, we have
\begin{equation}\label{425d}
\int_{\partial B(x,r)}\frac{\partial}{\partial\nu}(\Delta w)(y)\mathrm{d}\sigma=\int_{B(x,r)}v^4(y)\mathrm{d}y,\end{equation}
where $\nu$ denotes the unit outer normal vector on $\partial B(x,r)$. Integrating equation \eqref{425d} w.r.t. $r$ from $0$ to $4$, from the integral representation formula of $\Delta w(x)$ in \eqref{522} in Lemma \ref{lem510}, we get
\begin{align}
-\bbint_{\partial B(x,4)}\Delta w(y)\mathrm{d}\sigma&=-\Delta w(x)-\frac{1}{4\pi^2}\int_{B(x,4)}\lr\frac{1}{|x-y|^2}-\frac{1}{16}\rr v^4(y)\mathrm{d}y\nonumber\\
&=\frac{1}{4\pi^2}\int_{\R^4\setminus B(x,4)}\frac{v^4(y)}{|x-y|^2}\mathrm{d}y+\frac{1}{64\pi^2}
\int_{B(x,4)}v^4(y)\mathrm{d}y+C \nonumber\\
&\leq\frac{1}{64\pi^2}\int_{\R^4}v^4(y)\mathrm{d}y+C\leq C.\label{426d}
\end{align}
Combining \eqref{423d} with \eqref{426d}, we have $\bar h(y)\leq C$ for $y\in B(x,2)$. Note that $h(y)$ satisfies
\begin{equation}\label{a427}
\begin{cases}
\Delta h(y)=-\bar h(y)\quad &\text{in}\ B(x,4),\\
h(y)=w(y)\quad &\text{on}\ \partial B(x,4).
\end{cases}
\end{equation}
By a standard regularity estimate of elliptic equations on \eqref{a427}, we obtain
\begin{equation}\label{428}
\sup_{B(x,1)}h(y)\leq C\lr\|h^+\|_{L^k(B(x,2))}+\|\bar h\|_{L^\infty(B(x,2))}\rr
\end{equation}
for any $k>1$. Then we estimate $\|h^+\|_{L^k(B(x,2))}$ in \eqref{428}.

Since $h=w-g$, we get $h^+\leq w^++|g|$. From the assumption $\int_{\mathbb{R}^{4}}e^{\Lambda pw(x)}\mathrm{d}x<+\infty$ for some $\Lambda>0$, we get
\begin{align}\label{429}
\|h^+\|_{L^k(B(x,2))}&\leq C \|w^+\|_{L^k(B(x,2))}+C\|g\|_{L^k(B(x,2))}
\\
&\leq C\lr\lr\int_{B(x,2)}e^{\Lambda pw(y)}\mathrm{d}y\rr^{\frac{1}{k}}+
\lr\int_{B(x,2)}e^{7p|g|(y)}\mathrm{d}y\rr^{\frac{1}{k}}\rr
\nonumber\\
&\leq C \nonumber
\end{align}
for some constant $C>0$ independent of $x$. Then, from \eqref{428} and \eqref{429}, we obtain $\sup\limits_{B(x,1)}h(y)\leq C$. From $w=h+g$, we infer that
\begin{align}\label{430}
\int_{B(x,1)}e^{7pw(y)}\mathrm{d}y\leq C\int_{B(x,1)}e^{7p|g|(y)}\mathrm{d}y\leq C.
\end{align}
By the integral representation formula of $u(x)$ in \eqref{eq51} and the integrability $|x|^{-3}e^{pw}\in L^{1}(\mathbb{R}^{4})$ in \eqref{eq52}, we obtain from \eqref{430} that, for any $|x|\geq R$,
\begin{align}\label{431}
u(x)&=\frac{1}{4\pi^2}\int_{\mathbb{R}^{4}}\frac{e^{pw(y)}}{|x-y|^{3}}\mathrm{d}y=\frac{1}{4\pi^2}\int_{|y-x|\geq1}\frac{e^{pw(y)}}{|x-y|^{3}}\mathrm{d}y
+\frac{1}{4\pi^2}\int_{B(x,1)}\frac{e^{pw(y)}}{|x-y|^{3}}\mathrm{d}y\\
&\leq\frac{1}{4\pi^2}\int_{|y-x|\geq1}\frac{e^{pw(y)}}{|y|^{3}}\mathrm{d}y\cdot\max_{|y-x|\geq1}\frac{|y|^{3}}{|x-y|^{3}}\nonumber\\
&\quad +\frac{1}{4\pi^2}\lr\int_{B(x,1)}\frac{1}{|x-y|^{\frac{7}{2}}}\mathrm{d}y\rr^{\frac{6}{7}}
\lr\int_{B(x,1)}e^{7pw(y)}\mathrm{d}y\rr^{\frac{1}{7}}\nonumber\\
&\leq \frac{|x|^{3}}{4\pi^2}\int_{\mathbb{R}^{4}}\frac{e^{pw(y)}}{|y|^{3}}\mathrm{d}y+C=O(|x|^{3}). \nonumber
\end{align}
By the 2nd equation $-\Delta v=u^2$ of $4$-D system and a standard elliptic estimate, from \eqref{431} and $\int_{\mathbb{R}^{4}}v^{4}(x)\mathrm{d}x<+\infty$, we get, for any $|x|\geq R+1$,
\begin{equation*}
\sup_{B(x,\frac{1}{2})}v(y)\leq C\lr\|v\|_{L^4(B(x,1))}+\| u^2\|_{L^\infty(B(x,1))}\rr=O(|x|^{6}).
\end{equation*}
(ii)\ If $u=O(|x|^{K})$ at $\infty$ for some $K\gg1$ arbitrarily large, from the 2nd equation $-\Delta v=u^2$ of $4$-D system, a standard elliptic estimate and the finite total curvature condition $\int_{\mathbb{R}^{4}}v^{4}(x)\mathrm{d}x<+\infty$, we get, for $|x|$ large,
\begin{equation*}
\sup_{B(x,\frac{1}{2})}v(y)\leq C\lr\|v\|_{L^4(B(x,1))}+\| u^2\|_{L^\infty(B(x,1))}\rr=O(|x|^{2K}).
\end{equation*}
This concludes our proof of the key Lemma \ref{lem46}.
\end{proof}
\begin{rem}\label{rem2}
If $\Lambda=1$ in the assumption $(i)$ of Lemma \ref{lem46} (i.e., $\int_{\mathbb{R}^{4}}e^{pw(x)}\mathrm{d}x<+\infty$), then it is clear that the estimate \eqref{431} can be improved and we get, for any $|x|\geq R$, $$u(x)\leq\frac{1}{4\pi^2}\int_{|y-x|\geq1}\frac{e^{pw(y)}}{|x-y|^{3}}\mathrm{d}y+C\leq\frac{1}{4\pi^2}\int_{|y-x|\geq1}e^{pw(y)}\mathrm{d}y+C\leq C.$$
Then it follows from the standard elliptic estimate that $v=O(1)$ (i.e., $v(x)\leq C$).
\end{rem}

\smallskip

Now, we are ready to get the integral representation formula and asymptotic property for $w$.
\begin{lem} \label{lem515}
Assume $\int_{\mathbb{R}^{4}}v^{4}(x)\mathrm{d}x<+\infty$ and $w(x)=o(|x|^{2})$ as $|x|\rightarrow+\infty$. Suppose that \\
either $(H_1)$ $u=O(|x|^{K})$ or $v=O(|x|^{K})$ as $|x|\rightarrow+\infty$ for some $K\gg1$ arbitrarily large, \\
or $(H_2)$ $\int_{\mathbb{R}^{4}}e^{\Lambda pw(y)}\mathrm{d}y<+\infty$ for some $\Lambda>0$, \\
then we have
\begin{equation}\label{541}
w(x)=\frac{1}{8\pi^2}\int_{\mathbb{R}^{4}}\ln\left(\frac{|y|}{|x-y|}\right)v^{4}(y)\mathrm{d}y+C_0,
\end{equation}
where $C_0\in\R$ is a constant. Moreover,
\begin{equation}\label{419}
\lim_{|x|\rightarrow+\infty}\frac{w(x)}{\ln|x|}=-\alpha.
\end{equation}
\end{lem}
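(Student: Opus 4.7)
The plan is to mirror the proof strategy of the 3-D analog (Lemma \ref{le3}) in this 4-D setting. First I would establish the asymptotic property
\begin{equation*}
\lim_{|x|\to+\infty} \frac{q(x)}{\ln|x|} = \alpha,
\end{equation*}
where $q$ is defined by \eqref{517}. Combining \eqref{518} with the integral representation \eqref{dd419} of $\Delta w$ shows $\Delta(w+q) \equiv 0$, so $w+q$ is harmonic in $\mathbb{R}^4$. From $w(x) = o(|x|^2)$ and the asymptotic property of $q$ one gets $(w+q)^+(x) = o(|x|^2)$, so by Lemma \ref{lemma214}(ii) there exist constants $a_i\in\mathbb{R}$ with $w(x) + q(x) = \sum_{i=1}^4 a_i x_i + a_0$. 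Then the integrability $|x|^{-3} e^{pw} \in L^1(\mathbb{R}^4)$ from \eqref{eq52}, together with the asymptotic $q(x) \sim \alpha \ln|x|$, forces $a_1 = a_2 = a_3 = a_4 = 0$ (otherwise the factor $e^{p\sum a_i x_i}$ would grow exponentially in some direction and destroy integrability). This yields \eqref{541} with $C_0 = a_0$, and \eqref{419} is immediate.

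To prove $\lim q(x)/\ln|x| = \alpha$, it suffices to show
\begin{equation*}
\lim_{|x|\to+\infty} \int_{\mathbb{R}^4} \frac{\ln|x-y| - \ln|y| - \ln|x|}{\ln|x|} \, v^4(y) \, \mathrm{d}y = 0.
\end{equation*}
Splitting the integration region into $B_1(x)$, $\{|y| < \ln|x|\}$, and $\{|y|\geq \ln|x|, \, |x-y|\geq 1\}$, the contributions from the last two pieces are controlled by $\int v^4 < \infty$ and the elementary inequality $\frac{1}{2|x|^2} \leq \frac{|x-y|}{|x||y|} \leq 1$ on the outer piece (for $|x|$ large). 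The only delicate part is the logarithmically singular integral
\begin{equation*}
\int_{B_1(x)} \ln\!\left(\frac{1}{|x-y|}\right) v^4(y) \, \mathrm{d}y,
\end{equation*}
which I would estimate via the $\exp^L + L\ln L$ inequality (Lemma \ref{lem212}) as
\begin{equation*}
\leq \int_{B_1(x)} \frac{1}{|x-y|} \, \mathrm{d}y + \Big(\max_{|x-y|\leq 1} \ln(v^4(y)+1)\Big) \int_{B_1(x)} v^4(y) \, \mathrm{d}y,
\end{equation*}
and upon division by $\ln|x|$ this tends to zero provided the logarithm of $v^4$ grows at most polynomially in $|x|$.

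The main obstacle, and the reason the hypotheses $(H_1)$ and $(H_2)$ appear, is precisely this polynomial-growth bound on $v$. Fortunately Lemma \ref{lem46} supplies exactly what is needed: under either assumption, $v(x) = O(|x|^{2K})$ at $\infty$ for some $K \gg 1$, so $\ln(v^4(y)+1) = O(\ln|x|)$ uniformly for $y \in B_1(x)$ with $|x|$ large. Hence the singular term contributes $O(\ln|x|) \cdot \int_{B_1(x)} v^4 \, \mathrm{d}y = o(\ln|x|)$, because $\int_{B_1(x)} v^4 \to 0$ by the finite total curvature condition and absolute continuity of $v^4 \in L^1(\mathbb{R}^4)$.

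Putting these ingredients together yields the desired limit $q(x)/\ln|x| \to \alpha$, which by the Liouville argument above completes the proof of \eqref{541}, and then the asymptotic behavior \eqref{419} of $w$ follows directly by substituting the asymptotic property of $q$ into the representation formula.
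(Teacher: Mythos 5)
Your proposal matches the paper's proof essentially step for step: the same split of the integral region (a unit ball around $x$, a ball $|y|<\ln|x|$, and the complement), the same use of the $\exp^L + L\ln L$ inequality for the logarithmic singularity combined with Lemma \ref{lem46} to control $\ln(v^4+1)$, the same observation that $\Delta(w+q)=0$ followed by Lemma \ref{lemma214}(ii), and the same use of $|x|^{-3}e^{pw}\in L^1(\mathbb{R}^4)$ to annihilate the linear terms. No discrepancies or gaps.
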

\begin{proof}[{\sl Proof of Lemma~\ref{lem515}}]
We will first prove the following asymptotic property:
\begin{equation}\label{420}
\lim_{|x|\rightarrow+\infty}\frac{q(x)}{\ln|x|}=\alpha.
\end{equation}
We only need to show that
\begin{equation*}
\lim_{|x|\rightarrow+\infty}\int_{\mathbb{R}^{4}}\frac{\ln|x-y|-\ln|y|
-\ln|x|}{\ln|x|}v^{4}(y)\mathrm{d}y=0.
\end{equation*}
From $\exp^{L}+L\ln L$ inequality in Lemma \ref{lem212}, we get
\begin{equation}\label{422}
\begin{aligned}
\int_{B_{1}(x)}\ln\left(\frac{1}{|x-y|}\right)v^{4}(y)\mathrm{d}y&\leq\int_{B_{1}(x)}
\frac{1}{|x-y|}\mathrm{d}y+\int_{B_{1}(x)}v^{4}(y)\ln(v^{4}(y)+1)\mathrm{d}y\\
&\leq\frac{2\pi^2}{3}+\left(\max_{|x-y|\leq1}\ln(v^{4}(y)+1)\right)\int_{B_{1}(x)}
v^{4}(y)\mathrm{d}y.
\end{aligned}
\end{equation}
Since Lemma \ref{lem46} implies $v=O(|x|^{2K})$ at $\infty$ for some $K\gg1$ arbitrarily large, we obtain from \eqref{422} and the finite total curvature condition $\int_{\mathbb{R}^{4}}v^{4}(x)\mathrm{d}x<+\infty$ that, for any $|x|\geq e^{2}$ large enough,
\begin{equation}\label{423}
\begin{aligned}
&\quad\Big|\int_{\mathbb{R}^{4}}\frac{\ln|x-y|-\ln|y|-\ln|x|}{\ln|x|}v^{4}(y)\mathrm{d}y\Big|\\
\leq&3\int_{B_{1}(x)}v^{4}(y)\mathrm{d}y+\frac{2\pi^2}{3\ln|x|}+
\frac{O(8K\ln|x|)}{\ln|x|}\int_{B_{1}(x)}v^{4}(y)\mathrm{d}y\\
&+\displaystyle\frac{\max_{|y|\leq\ln|x|}|\ln\frac{|x-y|}{|x|}|}{\ln|x|}
\int_{|y|\leq\ln|x|}v^{4}(y)\mathrm{d}y+\frac{1}{\ln|x|}\int_{|y|<\ln|x|}|\ln|y||v^{4}(y)\mathrm{d}y\\
&+\sup_{|x-y|\geq1,|y|\geq\ln|x|}\frac{|\ln|x-y|-\ln|y|-\ln|x||}{\ln|x|}
\int_{|y|\geq\ln|x|}v^{4}(y)\mathrm{d}y\\
\leq&o_{|x|}(1)+\frac{2\pi^2}{3\ln|x|}+\frac{\ln2}{\ln|x|}\int_{R^{4}}v^{4}(x)\mathrm{d}x+
\frac{1}{\ln|x|}\int_{|y|<1}\ln\left(\frac{1}{|y|}\right)v^{4}(y)\mathrm{d}y\\
&\quad+\frac{\ln(\ln|x|)}{\ln|x|}\int_{R^{3}}v^{4}(y)\mathrm{d}y
+\left(2+\frac{\ln2}{\ln|x|}\right)\int_{|y|\geq\ln|x|}v^{4}(y)\mathrm{d}y=o_{|x|}(1).
\end{aligned}
\end{equation}
Let $|x|\rightarrow+\infty$ in \eqref{423}, we have
$$\lim_{|x|\rightarrow+\infty}\int_{\mathbb{R}^{4}}\frac{\ln|x-y|-\ln|y|
-\ln|x|}{\ln|x|}v^{4}(y)\mathrm{d}y=0,$$
which proves the asymptotic property \eqref{420}. Next, we show \eqref{541}. By \eqref{518} and \eqref{dd419}, we have
$$\Delta\left(q(x)+w(x)\right)=0.$$
From \eqref{420} and the condition $w(x)=o(|x|^{2})$ at $\infty$, we can immediately deduce from Lemma \ref{lemma214} (ii) that, for some constant $a_i\in\R$ ($i=0,1,2,3,4$),
\begin{equation*}
w(x)+q(x)=\sum_{i=1}^{4}a_ix_i+a_0,\quad \forall\ x\in\mathbb{R}^{4}.
\end{equation*}
Since \eqref{eq52} implies $|x|^{-3}e^{pw(x)}=|x|^{-3}e^{-pq(x)}e^{pa_0}e^{p\sum_{i=1}^{4}a_ix_i}\in L^1(\R^4)$, we infer from the asymptotic property \eqref{420} that $a_1=a_2=a_3=a_4=0$. Hence the integral representation formula \eqref{541} for $w$ holds. The asymptotic property \eqref{419} of $w$ follows immediately from \eqref{541} and \eqref{420}. This completes our proof of Lemma \ref{lem515}.
\end{proof}

As a consequence of Lemma \ref{lem515}, we have the following Corollary.
\begin{cor} \label{lem58}
Assume $\int_{\mathbb{R}^{4}}v^{4}(x)\mathrm{d}x<+\infty$ and $w(x)=o(|x|^{2})$ as $|x|\rightarrow+\infty$. Suppose that \\
either $(H_1)$ $u=O(|x|^{K})$ or $v=O(|x|^{K})$ as $|x|\rightarrow+\infty$ for some $K\gg1$ arbitrarily large, \\
or $(H_2)$ $\int_{\mathbb{R}^{4}}e^{\Lambda pw(y)}\mathrm{d}y<+\infty$ for some $\Lambda>0$. \\
Then we have, for arbitrarily small $\delta>0$,
\begin{equation}\label{514}
\lim_{|x|\rightarrow+\infty}\frac{e^{pw(x)}}{|x|^{-\alpha p-\delta}}=+\infty,\quad
\lim_{|x|\rightarrow+\infty}\frac{e^{pw(x)}}{|x|^{-\alpha p+\delta}}=0.
\end{equation}
Consequently, $\alpha=\frac{1}{8\pi^2}\int_{\mathbb{R}^{4}}v^{4}(x)\mathrm{d}x\geq\frac{1}{p}$. Furthermore, if $\alpha>\frac{4}{p}$, then
\begin{equation}\label{540eq}
\gamma:=\frac{1}{4\pi^2}\int_{\mathbb{R}^{4}}e^{pw(x)}\mathrm{d}x<+\infty
\end{equation}
and
\begin{equation}\label{516}
\lim_{|x|\rightarrow+\infty}|x|^{3}u(x)=\gamma,
\end{equation}
in addition,
\begin{equation}\label{430eq1}
\beta:=\frac{1}{4\pi^2}
\int_{\mathbb{R}^{4}}u^{2}(x)\mathrm{d}x
<+\infty
\end{equation}
and
\begin{equation}\label{542eq}
\lim_{|x|\rightarrow+\infty}|x|^2v(x)=\beta.
\end{equation}
\end{cor}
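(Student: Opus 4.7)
The plan is to follow the blueprint already used for Corollary \ref{le4} in the three-dimensional setting, transferring each step to the dimension-adjusted exponents of the 4-D problem.

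First I would read off the two-sided bound \eqref{514} directly from the crucial asymptotic identity $\lim_{|x|\to\infty} \frac{w(x)}{\ln|x|}=-\alpha$ obtained in Lemma \ref{lem515}. Writing $w(x)=-\alpha\ln|x|+o(\ln|x|)$ and exponentiating gives $e^{pw(x)}=|x|^{-\alpha p}e^{o(\ln|x|)}$, from which \eqref{514} is immediate. Combining this lower asymptotic with the integrability $|x|^{-3}e^{pw}\in L^{1}(\mathbb{R}^{4})$ from \eqref{eq52} then forces $3+\alpha p\geq 4$, hence $\alpha\geq\frac{1}{p}$. Similarly, when $\alpha>\frac{4}{p}$ the upper asymptotic bound in \eqref{514} can be chosen with $\delta$ so small that $\alpha p-\delta>4$, and the tail of $e^{pw}$ is dominated by $|x|^{-\alpha p+\delta}\in L^{1}(\{|x|\geq R\})$; together with the local boundedness of $e^{pw}$ this gives \eqref{540eq}, i.e.\ $\gamma<+\infty$.

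Next I would prove the pointwise asymptotic $\lim_{|x|\to\infty}|x|^{3}u(x)=\gamma$ via the integral representation \eqref{eq51}. Writing
\begin{equation*}
|x|^{3}u(x)-\gamma=\frac{1}{4\pi^{2}}\int_{\mathbb{R}^{4}}\frac{|x|^{3}-|x-y|^{3}}{|x-y|^{3}}\,e^{pw(y)}\,\mathrm{d}y,
\end{equation*}
I would split $\mathbb{R}^{4}$ into the three regions $\{|y-x|<|x|/2\}$, $\{|y-x|\geq|x|/2,\,|y|\geq|x|/2\}$ and $\{|y-x|\geq|x|/2,\,|y|<|x|/2\}$. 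Using $\bigl||x|^{3}-|x-y|^{3}\bigr|\leq|y|\bigl(|x|^{2}+|x||x-y|+|x-y|^{2}\bigr)$ together with the bound $e^{pw(y)}\leq C|y|^{-\alpha p+\delta}$ at infinity, each region contributes a term that vanishes as $|x|\to\infty$: the first is controlled by $|x|^{4-\alpha p+\delta}$, the second by the $L^{1}$-tail of $e^{pw}$ on $\{|y|\geq|x|/2\}$, and the third by $|x|^{-1}\int_{|y|<|x|/2}|y|e^{pw(y)}\,\mathrm{d}y$, whose growth rate (bounded, logarithmic, or $|x|^{5-\alpha p+\delta}$, depending on the case $\alpha p>5$, $\alpha p=5$, or $4<\alpha p<5$) is always strictly smaller than $|x|$. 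The main technical obstacle is this last region: one has to track carefully how the weight $|y|e^{pw(y)}$ behaves over a region whose size depends on $|x|$, and the proof splits into the three subcases above as was done via the auxiliary function $\xi(x)$ in Corollary \ref{le4}.

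Finally, the asymptotic $|x|^{3}u(x)\to\gamma$ yields the pointwise bound $u(x)\leq C|x|^{-3}$ for $|x|$ large, from which $u^{2}(x)\leq C|x|^{-6}$ is integrable on $\mathbb{R}^{4}$ (together with the local boundedness of $u$), giving $\beta<+\infty$ as in \eqref{430eq1}. The asymptotic \eqref{542eq} for $v$ then follows from the integral representation \eqref{534q}: the identity
\begin{equation*}
|x|^{2}v(x)-\beta=\frac{1}{4\pi^{2}}\int_{\mathbb{R}^{4}}\frac{|x|^{2}-|x-y|^{2}}{|x-y|^{2}}\,u^{2}(y)\,\mathrm{d}y
\end{equation*}
is estimated by the same three-region decomposition, now using $\bigl||x|^{2}-|x-y|^{2}\bigr|\leq|y|(|x|+|x-y|)$ and the sharp decay $u^{2}\leq C|x|^{-6}$ just established. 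Each of the three resulting integrals vanishes in the limit, which concludes the proof.
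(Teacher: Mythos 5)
Your proposal is correct and follows essentially the same route as the paper: \eqref{514} from the logarithmic asymptotic of $w$ in Lemma \ref{lem515}, then $\alpha\geq\tfrac{1}{p}$ from \eqref{eq52}, then $\gamma<\infty$ from the pointwise decay, then \eqref{516} and \eqref{542eq} by splitting the representation-formula difference into the near/far/intermediate regions with the decay bound $e^{pw(y)}\leq C|y|^{-\alpha p+\delta}$, and finally \eqref{430eq1} from the resulting bound $u\leq C|x|^{-3}$. The only point worth flagging is that in the region $\{|y-x|\geq|x|/2,\ |y|<|x|/2\}$ your kernel bound $\bigl||x|^{3}-|x-y|^{3}\bigr|/|x-y|^{3}=O(|y|/|x|)$ (with subcases at $\alpha p=5$) is the correct one, whereas the paper's printed estimate uses $8|y|^{3}/|x|^{3}$ (with subcases at $\alpha p=10$), which does not actually dominate the kernel; your version repairs this slip while giving the same $o_{|x|}(1)$ conclusion.
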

\begin{proof}[{\sl Proof of Corollary~\ref{lem58}}]
From Lemma \ref{lem515}, we obtain $$w(x)=-\alpha\ln |x|+o(\ln |x|), \qquad \text{as} \,\, |x|\to\infty,$$
which indicates that
\begin{equation}\label{4dasym}
  e^{pw(x)}=e^{-\alpha p\ln |x|+o(\ln |x|)}=|x|^{-\alpha p}e^{o(\ln |x|)}, \qquad \text{as} \,\, |x|\to\infty,
\end{equation}
where $\alpha$ is defined in \eqref{e518}. Thus, for arbitrarily small $\delta>0$, the asymptotic property \eqref{514} follows immediately from \eqref{4dasym}. Moreover, by the integrability $\int_{\mathbb{R}^{4}}\frac{e^{pw(x)}}{|x|^{3}}\mathrm{d}x<+\infty$ in \eqref{eq52}, we derive $\alpha\geq\frac{1}{p}$.

If $\alpha>\frac{4}{p}$, from the asymptotic property \eqref{514}, it follows immediately that $\gamma:=\int_{\mathbb{R}^{4}}e^{pw(x)}\mathrm{d}x<+\infty$. Now we aim to show the asymptotic property \eqref{516} for $u$. Let $\delta=\frac{\alpha p-4}{2}$, according to \eqref{514}, there exists a $R_0\geq1$ large enough such that
\begin{equation}\label{541eq}
e^{pw(x)}\leq|x|^{-\frac{\alpha p+4}{2}},\qquad\forall \,\, |x|\geq R_0.
\end{equation}
In order to get \eqref{516}, by the integral representation formula \eqref{eq51} of $u$, we only need to show
\begin{equation*}
\lim_{|x|\rightarrow+\infty}\int_{\R^4}\frac{|x|^3-|x-y|^3}{|x-y|^3}e^{pw(y)}\mathrm{d}y=0.
\end{equation*}
Indeed, from \eqref{540eq} and \eqref{541eq}, we obtain, for any $|x|\geq R_0$,
\begin{align*}
&\quad\left|\int_{\R^4}\frac{|x|^3-|x-y|^3}{|x-y|^3}e^{pw(y)}\mathrm{d}y\right|\\
&\leq\int_{|y-x|<\frac{|x|}{2}}\frac{1}{|x-y|^3|y|^{\frac{\alpha p-2}{2}}}\mathrm{d}y
+9\int_{\{|y-x|\geq\frac{|x|}{2}\}\cap\{|y|\geq\frac{|x|}{2}\}}
e^{pw(y)}\mathrm{d}y\nonumber\\
&\quad+\frac{8}{|x|^3}\int_{|y|<R_0}|y|^3e^{pw(y)}\mathrm{d}y
+\frac{8}{|x|^3}\int_{R_0\leq |y|<\frac{|x|}{2}}\frac{1}{|y|^{\frac{\alpha p-2}{2}}}\mathrm{d}y\nonumber\\
&\leq\frac{2^{\frac{\alpha p}{2}}\pi^2}{(\alpha p+2)|x|^{\frac{\alpha p-4}{2}}}+o_{|x|}(1)+\frac{8}{|x|^3}\int_{|y|<R_0}|y|^3e^{pw(y)}\mathrm{d}y
+\frac{16\pi^2}{|x|^3}
\eta(x)=o_{|x|}(1),\nonumber
\end{align*}
where $\eta(x)=\frac{2}{10-\alpha p}\left(\frac{|x|}{2}\right)^{\frac{10-\alpha p}{2}},\
\text{if} \ 4<\alpha p<10,\ \eta(x)=\ln\left(\frac{|x|}{2}\right)\
\text{if} \ \alpha p=10,\ \eta(x)=\frac{2}{\alpha p-10}R_0^{\frac{\alpha p-10}{2}},\\
\text{if} \ \alpha p>10$. Thus we obtain the asymptotic property \eqref{516} for $u$. From \eqref{516}, we get
\begin{equation}\label{431eq}
u(x)\leq\frac{C}{|x|^3},\qquad\forall \,\, |x|\geq R_0.
\end{equation}
Thus, one has $\beta:=\frac{1}{4\pi^2}\int_{\mathbb{R}^{4}}u^{2}(x)\mathrm{d}x<+\infty$.

At the end, from the integral representation formula \eqref{534q} of $v$, we can see that, the asymptotic property \eqref{542eq} of $v$ is equivalent to
\begin{equation}\label{546eq}
\lim_{|x|\rightarrow+\infty}\int_{\R^4}\frac{|x|^2-|x-y|^2}{|x-y|^2}u^{2}(y)\mathrm{d}y=0.
\end{equation}
We only need to show \eqref{546eq}. Indeed, from \eqref{430eq1} and \eqref{431eq}, we get, for any $|x|\geq R_0$,
\begin{align*}
&\quad\left|\int_{\R^4}\frac{|x|^2-|x-y|^2}{|x-y|^2}u^{2}(y)\mathrm{d}y\right|\\
&\leq\int_{|y-x|<\frac{|x|}{2}}\frac{1}{|x-y|^2|y|^{4}}\mathrm{d}y
+5\int_{\{|y-x|\geq\frac{|x|}{2}\}\cap\{|y|\geq\frac{|x|}{2}\}}
u^{2}(y)\mathrm{d}y\nonumber\\
&\quad+\frac{4}{|x|^2}\int_{|y|<R_0}|y|^2u^{2}(y)\mathrm{d}y
+\frac{4}{|x|^2}\int_{R_0\leq |y|<\frac{|x|}{2}}\frac{1}{|y|^{4}}\mathrm{d}y\nonumber\\
&\leq\frac{4\pi^2}{|x|^2}+o_{|x|}(1)+\frac{4}{|x|^2}\int_{|y|<R_0}|y|^2u^{2}(y)\mathrm{d}y
+\frac{8\pi^2}{|x|^2}
\ln\left(\frac{|x|}{2}\right)=o_{|x|}(1).\nonumber
\end{align*}
This proves the asymptotic property \eqref{542eq} and hence concludes our proof of Corollary \ref{lem58}.
\end{proof}

\smallskip

By Lemma \ref{lemm212}, \ref{le51} and \ref{lem515}, we have proved that the classical solution $(u,v,w)$ of the systems \eqref{eq41} solve the following integral system:
\begin{equation}\label{a2}
\begin{cases}\displaystyle
\ u(x)=\frac{1}{4\pi^2}\int_{\mathbb{R}^{4}}\frac{e^{pw(y)}}{|x-y|^{3}}\mathrm{d}y,\\
\displaystyle \ v(x)=\frac{1}{4\pi^2}\int_{\mathbb{R}^{4}}\frac{u^2(y)}{|x-y|^2}\mathrm{d}y,\\
\displaystyle \ w(x)=\frac{1}{8\pi^2}\int_{\mathbb{R}^{4}}\ln\left(\frac{|y|}{|x-y|}\right)v^{4}(y)\mathrm{d}y
+C_0,
\end{cases}
\end{equation}
where $C_0\in \R$. Next, we will apply the method of moving spheres to show that $\alpha=\frac{5}{p}$ and derive the classification of $(u,v,w)$.

\medskip

To this end, for arbitrarily given $x_0\in\R^4$ and any $\lambda>0$, we define the Kelvin transforms of $(u,v,w)$ centered at $x_0$ by
\begin{align}\label{eq43}
\begin{cases}
u_{x_0,\lambda}(x):=\left(\frac{\lambda}{|x-x_0|}\right)^3u(x^{x_0,\lambda}),\\
v_{x_0,\lambda}(x):=\left(\frac{\lambda}{|x-x_0|}\right)^2v(x^{x_0,\lambda}),\\
w_{x_0,\lambda}(x):=w(x^{x_0,\lambda})+\frac{5}{p}\ln\frac{\lambda}{|x-x_0|}
\end{cases}
\end{align}
for arbitrary $x\in\R^4\backslash\{x_0\}$, where $x^{x_0,\lambda}:=\frac{\lambda^2(x-x_0)}{|x-x_0|^2}+x_0$.

\smallskip

Now, we will carry out the the method of moving spheres to the $4$-D IE system \eqref{a2} with respect to arbitrarily given point $x_{0}\in\mathbb{R}^{4}$. Let $\lambda>0$ be an arbitrary positive real number. We start moving the sphere $S_{\lambda}(x_{0}):=\{x\in\mathbb{R}^{4}\mid \, |x-x_{0}|=\lambda\}$ from near $\lambda=0$ or $\lambda=+\infty$, until its limiting position. Therefore, the moving sphere process can be divided into two steps.

\smallskip

In what follows, the two different cases $\alpha\geq\frac{5}{p}$ and $\frac{1}{p}\leq \alpha\leq\frac{5}{p}$ will be discussed separately.

\medskip

\textbf{Step 1:} Moving the sphere $S_\la(x_0)$ from $\la=0$ or $\la=+\infty$. Define
$$Q_{x_0,\la}^u(x):=u_{x_0,\la}(x)-u(x),\quad Q_{x_0,\la}^v(x):=v_{x_0,\la}(x)-v(x),\quad Q_{x_0,\la}^w(x):=w_{x_0,\la}(x)-w(x)$$
and let
\begin{align*}
B_{\la,u}^+(x_0):=\{x\in B_\la(x_0)\backslash\{x_0\}\ |Q_{x_0,\la}^u(x)>0\},\\
B_{\la,v}^+(x_0):=\{x\in B_\la(x_0)\backslash\{x_0\}\ |Q_{x_0,\la}^v(x)>0\},\\
B_{\la,w}^+(x_0):=\{x\in B_\la(x_0)\backslash\{x_0\}\ |Q_{x_0,\la}^w(x)>0\}.
\end{align*}

\smallskip

\textbf{Case 1:} $\alpha\geq\frac{5}{p}$. We will prove that, for $\la>0$ large enough,
\begin{align}\label{66}
Q_{x_0,\la}^u(x)\leq0,\quad Q_{x_0,\la}^v(x)\leq0 \quad \text{and}\ Q_{x_0,\la}^w(x)\leq0.\end{align}
This is equal to prove for $\la>0$ large enough,
\begin{align*}
B_{\la,u}^+(x_0)=B_{\la,v}^+(x_0)=B_{\la,w}^+(x_0)=\emptyset, \quad\quad\forall x\in
B_{\lambda}(x_0)\backslash\{x_0\}.
\end{align*}

We start moving the sphere $S_\la(x_0)$ from near $\la=+\infty$ towards the point $x_0$ such that \eqref{66} holds. Since $(u,v,w)$ solve the integral system \eqref{a2}, for any $\lambda>0$ and all $x\in\R^4$, we obtain
\begin{align}\label{eq68}
u(x)&=\frac{1}{4\pi^2}\int_{\mathbb{R}^{4}}\frac{e^{pw(y)}}{|x-y|^{3}}\mathrm{d}y\nonumber\\
&=\frac{1}{4\pi^2}\int_{B_\lambda(x_0)}\frac{e^{pw(y)}}{|x-y|^{3}}\mathrm{d}y+
\frac{1}{4\pi^2}\int_{B_\lambda(x_0)}\frac{e^{pw(y^{x_0,\lambda})}}{|x-y^{x_0,\lambda}|^{3}}\left(\frac{\lambda}{|y-x_0|}\right)^8\mathrm{d}y\nonumber\\
&=\frac{1}{4\pi^2}\int_{B_\lambda(x_0)}\frac{e^{pw(y)}}{|x-y|^{3}}\mathrm{d}y
+\frac{1}{4\pi^2}\int_{B_\lambda(x_0)}\frac{e^{pw_{x_0,\lambda}(y)}}{\left|\frac{\lambda(x-x_0)}{|x-x_0|}-\frac{|x-x_0|(y-x_0)}{\lambda}\right|^{3}}\mathrm{d}y,
\end{align}

\begin{align}\label{eq69}
v(x)&=\frac{1}{4\pi^2}\int_{\mathbb{R}^{4}}\frac{u^2(y)}{|x-y|^{2}}\mathrm{d}y\nonumber\\
&=\frac{1}{4\pi^2}\int_{B_\lambda(x_0)}\frac{u^2(y)}{|x-y|^{2}}\mathrm{d}y+\frac{1}{4\pi^2}\int_{B_\lambda(x_0)}\frac{u^2(y^{x_0,\lambda})}{|x-y^{x_0,\lambda}|^{2}}\left(\frac{\lambda}{|y-x_0|}\right)^8\mathrm{d}y\nonumber\\
&=\frac{1}{4\pi^2}\int_{B_\lambda(x_0)}\frac{u^2(y)}{|x-y|^{2}}\mathrm{d}y+
\frac{1}{4\pi^2}\int_{B_\lambda(x_0)}\frac{u^2_{x_0,\lambda}(y)}{\left|\frac{\lambda(x-x_0)}{|x-x_0|}-\frac{|x-x_0|(y-x_0)}{\lambda}\right|^{2}}\mathrm{d}y
\end{align}
and

\begin{align}\label{eq610}
w(x)&=\frac{1}{8\pi^2}\int_{\mathbb{R}^{4}}\ln\left(\frac{|y|}{|x-y|}\right)v^4(y)\mathrm{d}y+C_0\nonumber\\
&=\frac{1}{8\pi^2}\int_{B_\lambda(x_0)}\ln\left(\frac{|y|}{|x-y|}\right)v^4(y)\mathrm{d}y\nonumber\\
&\quad+\frac{1}{8\pi^2}\int_{B_\lambda(x_0)}\ln\left(\frac{|y^{x_0,\lambda}|}{|x-y^{x_0,\lambda}|}\right)v^4(y^{x_0,\lambda})\left(\frac{\lambda}{|y-x_0|}\right)^8\mathrm{d}y+C_0\nonumber\\
&=\frac{1}{8\pi^2}\int_{B_\lambda(x_0)}\ln\left(\frac{|y|}{|x-y|}\right)v^4(y)\mathrm{d}y
+\frac{1}{8\pi^2}\int_{B_\lambda(x_0)}\ln\left(\frac{|y^{x_0,\lambda}|}
{|x-y^{x_0,\lambda}|}\right)v^4_{x_0,\lambda}(y)\mathrm{d}y+C_0.
\end{align}

From \eqref{eq68}, \eqref{eq69} and \eqref{eq610}, we deduce for any $\lambda>0$ and all $x\in\R^4\backslash\{x_0\}$,
\begin{align}\label{eq611}
u_{x_0,\lambda}(x)&=\frac{1}{4\pi^2}\left(\frac{\lambda}{|x-x_0|}\right)^3
\int_{\mathbb{R}^{4}}\frac{e^{pw(y)}}{|x^{x_0,\lambda}-y|^{3}}\mathrm{d}y\nonumber\\
&=\frac{1}{4\pi^2}\left(\frac{\lambda}{|x-x_0|}\right)^3\left(\int_{B_\lambda(x_0)}\frac{e^{pw(y)}}{|x^{x_0,\lambda}-y|^{3}}\mathrm{d}y+\int_{B_\lambda(x_0)}\frac{e^{pw(y^{x_0,\lambda})}}{|x^{x_0,\lambda}-y^{x_0,\lambda}|^{3}}\left(\frac{\lambda}{|y-x_0|}\right)^8\mathrm{d}y\right)\nonumber\\
&=\frac{1}{4\pi^2}\int_{B_\lambda(x_0)}\frac{e^{pw(y)}}
{\left|\frac{\lambda(x-x_0)}{|x-x_0|}-\frac{|x-x_0|(y-x_0)}{\lambda}\right|^{3}}\mathrm{d}y+\frac{1}{4\pi^2}\int_{B_\lambda(x_0)}\frac{e^{pw_{x_0,\lambda}(y)}}{|x-y|^{3}}\mathrm{d}y,
\end{align}
\begin{align}\label{eq612}
v_{x_0,\lambda}(x)&=\frac{1}{4\pi^2}\left(\frac{\lambda}{|x-x_0|}\right)^2\int_{\mathbb{R}^{4}}\frac{u^2(y)}{|x^{x_0,\lambda}-y|^{2}}\mathrm{d}y\nonumber\\
&=\frac{1}{4\pi^2}\left(\frac{\lambda}{|x-x_0|}\right)^2\left(\int_{B_\lambda(x_0)}\frac{u^2(y)}{|x^{x_0,\lambda}-y|^{2}}\mathrm{d}y+\int_{B_\lambda(x_0)}\frac{u^2(y^{x_0,\lambda})}{|x^{x_0,\lambda}-y^{x_0,\lambda}|^{2}}\left(\frac{\lambda}{|y-x_0|}\right)^8\mathrm{d}y\right)\nonumber\\
&=\frac{1}{4\pi^2}\int_{B_\lambda(x_0)}\frac{u^2(y)}{\left
|\frac{\lambda(x-x_0)}{|x-x_0|}-\frac{|x-x_0|(y-x_0)}{\lambda}\right|^{2}}\mathrm{d}y+\frac{1}{4\pi^2}
\int_{B_\lambda(x_0)}\frac{u^2_{x_0,\lambda}(y)}{|x-y|^{2}}\mathrm{d}y,
\end{align}
\begin{align}\label{eq613}
w_{x_0,\lambda}(x)&=\frac{1}{8\pi^2}\int_{B_\lambda(x_0)}\ln\left(\frac{|y|}{|x^{x_0,\lambda}-y|}
\right)v^4(y)\mathrm{d}y
+\frac{1}{8\pi^2}\int_{B_\lambda(x_0)}\ln\left(\frac{|y^{x_0,\lambda}|}{|x^{x_0,\lambda}-y^{x_0,\lambda}|}\right)
v^4_{x_0,\lambda}(y)\mathrm{d}y\nonumber\\
&\quad+\frac{5}{p}\ln\frac{\lambda}{|x-x_0|}+C_0.
\end{align}

By \eqref{eq611}, \eqref{eq612} and \eqref{eq613}, for any $x\in B_\lambda(x_0)\backslash\{x_0\}$, we have
\begin{align}\label{eq614}
Q_{x_0,\lambda}^u(x)
=\frac{1}{4\pi^2}\int_{B_\lambda(x_0)}\left(\frac{1}{|x-y|^{3}}-\frac{1}{\left|\frac{\lambda(x-x_0)}{|x-x_0|}-\frac{|x-x_0|(y-x_0)}{\lambda}\right|^{3}}\right)\left(e^{pw_{x_0,\lambda}(y)}-e^{pw(y)}\right)\mathrm{d}y,
\end{align}
\begin{align}\label{eq615}
Q_{x_0,\lambda}^v(x)
=\frac{1}{4\pi^2}\int_{B_\lambda(x_0)}\left(\frac{1}{|x-y|^{2}}
-\frac{1}{\left|\frac{\lambda(x-x_0)}{|x-x_0|}-\frac{|x-x_0|(y-x_0)}{\lambda}\right|^{2}}\right)
\left(u^2_{x_0,\lambda}(y)-u^2(y)\right)\mathrm{d}y,
\end{align}
\begin{align}\label{eq616}
Q_{x_0,\lambda}^w(x)
=\frac{1}{8\pi^2}\int_{B_\lambda(x_0)}\ln\left(\frac{|\frac{\lambda(x-x_0)}{|x-x_0|}-\frac{|x-x_0|(y-x_0)}{\lambda}|}{|x-y|}\right)(v^4_{x_0,\lambda}(y)-v^4(y))\mathrm{d}y
+\Big(\frac{5}{p}-\alpha\Big)\ln\frac{\lambda}{|x-x_0|},
\end{align}
where $\alpha$ is defined in \eqref{e518}. From the mean value theorem and \eqref{eq614}, for any $x\in B_{\lambda,u}^+(x_0)$, we obtain
\begin{align}\label{eq414}
0<Q_{x_0,\lambda}^u(x)&\leq\frac{1}{4\pi^2}\int_{B_{\lambda,w}^+(x_0)}\left(\frac{1}{|x-y|^{3}}-\frac{1}{|\frac{\lambda(x-x_0)}{|x-x_0|}-\frac{|x-x_0|(y-x_0)}{\lambda}|^{3}}\right)(e^{pw_{x_0,\lambda}(y)}-e^{pw(y)})\mathrm{d}y\nonumber\\
&\leq\frac{1}{4\pi^2}\int_{B_{\lambda,w}^+(x_0)}\frac{p}{|x-y|^{3}}e^{p\xi_{x_0,\la}^w(y)}Q_{x_0,\lambda}^w(y)\mathrm{d}y\nonumber\\
&\leq\frac{p}{4\pi^2}\int_{B_{\lambda,w}^+(x_0)}\frac{e^{pw_{x_0,\lambda}(y)}}{|x-y|^{3}}Q_{x_0,\lambda}^w(y)\mathrm{d}y,
\end{align}
where $w(y)<\xi_{x_0,\la}^w(y)<w_{x_0,\lambda}(y)$, for any $y\in B_{\lambda,w}^+(x_0)$. Similarly, for any $x\in B_{\lambda,v}^+(x_0)$, we have
\begin{align}\label{eq415}
0<Q_{x_0,\lambda}^v(x)
&\leq\frac{1}{4\pi^2}\int_{B_{\lambda,u}^+(x_0)}\left(\frac{1}{|x-y|^{2}}-\frac{1}{|\frac{\lambda(x-x_0)}{|x-x_0|}-\frac{|x-x_0|(y-x_0)}{\lambda}|^{2}}\right)(u^2_{x_0,\lambda}(y)-u^2(y))\mathrm{d}y\nonumber\\
&\leq\frac{1}{4\pi^2}\int_{B_{\lambda,u}^+(x_0)}\frac{2}{|x-y|^{2}}\xi_{x_0,\la}^uQ_{x_0,\lambda}^u(y)\mathrm{d}y\nonumber\\
&\leq\frac{1}{2\pi^2}\int_{B_{\lambda,u}^+(x_0)}\frac{u_{x_0,\lambda}(y)}{|x-y|^{2}}Q_{x_0,\lambda}^u(y)\mathrm{d}y
\end{align}
where $u(y)<\xi_{x_0,\la}^u(y)< u_{x_0,\lambda}(y)$, for any $y\in B_{\lambda,u}^+(x_0)$. Since $\alpha\geq\frac{5}{p}$, from \eqref{a20}, \eqref{a21}, \eqref{eq616} and the mean value theorem, for any given $\var>0$ small enough and $x\in B_{\lambda,w}^+(x_0)$, we have

\begin{align}\label{eq418}
0<Q_{x_0,\lambda}^w(x)
&\leq\frac{1}{8\pi^2}\int_{B_{\lambda,v}^+(x_0)}\ln\left(\frac{|\frac{\lambda(x-x_0)}{|x-x_0|}-\frac{|x-x_0|(y-x_0)}{\lambda}|}{|x-y|}\right)(v^4_{x_0,\lambda}(y)-v^4(y))\mathrm{d}y\nonumber\\
&\leq\frac{1}{4\pi^2}\int_{B_{\lambda,v}^+(x_0)\cap B_{\lambda\eta(\var)}(x_0)}\frac{\lambda^{2\var}}{|x-y|^{2\var}}(\xi_{x_0,\la}^v(y))^3Q_{x_0,\lambda}^v(y)\mathrm{d}y\nonumber\\
&\quad+\frac{1}{4\pi^2}\ln\left(1+\frac{1}{\eta^2(\var)}\right)\int_{B_{\lambda,v}^+(x_0)
\backslash B_{\lambda\eta(\var)}(x_0)}(\xi_{x_0,\la}^v(y))^3Q_{x_0,\lambda}^v(y)\mathrm{d}y\nonumber\\
&\leq\frac{1}{4\pi^2}\int_{B_{\lambda,v}^+(x_0)\cap B_{\lambda\eta(\varepsilon)}(x_0)}\frac{\lambda^{2\var}}{|x-y|^{2\var}}v^3_{x_0,\lambda}(y)Q_{x_0,\lambda}^v(y)\mathrm{d}y
\nonumber\\&\quad+
C(\eta(\varepsilon))\int_{B_{\lambda,v}^+(x_0)\backslash B_{\lambda\eta(\varepsilon)}(x_0)}v^3_{x_0,\lambda}(y)Q_{x_0,\lambda}^v(y)\mathrm{d}y
\end{align}
where $v(y)<\xi_{x_0,\la}^v(y)< v_{x_0,\lambda}(y)$, for any $y\in B_{\lambda,v}^+(x_0)$. By Hardy-Littlewood-Sobolev inequality \eqref{hl}, H\"{o}lder inequality, \eqref{eq414}, \eqref{eq415} and \eqref{eq418}, we obtain, for any given $\var\in(0,\frac{1}{2})$ small enough to be chosen later and any $t\in(\frac{2}{\var},+\infty)$
\begin{align}\label{eq419}
\|Q_{x_0,\lambda}^u(x)\|_{L^q(B_{\lambda,u}^+(x_0))}
&\leq C\|e^{pw_{x_0,\lambda}(x)}Q_{x_0,\lambda}^w(x)\|_{L^{\frac{6t}{6+5t}}(B_{\lambda,w}^+(x_0))}\nonumber\\
&\leq C\|e^{pw_{x_0,\lambda}(x)}\|_{L^{\frac{6}{5}}(B_{\lambda,w}^+(x_0))}\|Q_{x_0,\lambda}^w(x)\|_{L^{t}(B_{\lambda,w}^+(x_0))}\nonumber\\
&\leq C\la^{\frac{5}{3}}\lr\int_{\R^4}\frac{e^{\frac{6}{5}pw(x)}}{|x-x_0|^2}\mathrm{d}x\rr^{\frac{5}{6}}\|Q_{x_0,\lambda}^w(x)\|_{L^{t}(B_{\lambda,w}^+(x_0))},
\end{align}
\begin{align}\label{eq420}
\|Q_{x_0,\lambda}^v(x)\|_{L^r(B_{\lambda,v}^+(x_0))}& \leq C\|u_{x_0,\lambda}(x) Q_{x_0,\lambda}^u(x)\|_{L^{\frac{3q}{q+3}}(B_{\lambda,u}^+(x_0))}\nonumber\\
& \leq C\|u_{x_0,\lambda}(x)\|_{L^{3}(B_{\lambda,u}^+(x_0))}\|Q_{x_0,\lambda}^u(x)\|_{L^{q}(B_{\lambda,u}^+(x_0))}\nonumber\\
&\leq C\la^{-\frac{1}{3}}\lr\int_{\R^4}|x-x_0|u^3(x)\mathrm{d}x\rr^{\frac{1}{3}}\|Q_{x_0,\lambda}^u(x)\|_{L^{q}(B_{\lambda,u}^+(x_0))},
\end{align}
and
\begin{align}\label{eq421}
&\quad \|Q_{x_0,\lambda}^w(x)\|_{L^t(B_{\lambda,w}^+(x_0))}\nonumber\\
&\leq C\lambda^{2\var}\|v^3_{x_0,\lambda}(x)Q_{x_0,\lambda}^v(x)\|_{L^{\frac{2t}{2+(2-\var)t}}(B_{\lambda,v}^+(x_0))}\nonumber\\
&\quad+C(\eta(\varepsilon))|B_{\lambda,w}^+(x_0)|^{\frac{1}{t}}
\int_{B_{\lambda,v}^+(x_0)}v^3_{x_0,\lambda}(x)Q_{x_0,\lambda}^v(x)\mathrm{d}x\nonumber\\
&\leq C\lambda^{2\var}\|v_{x_0,\lambda}(x)\|^3_{L^{\frac{36}{7-6\var}}(B_{\lambda,v}^+(x_0))}
\|Q_{x_0,\lambda}^v(x)\|_{L^{r}(B_{\lambda,v}^+(x_0))}\nonumber\\
&\quad+C(\eta(\varepsilon))\lambda^{\frac{4}{t}}
\|v_{x_0,\lambda}(x)\|^3_{L^{\frac{3r}{r-1}}(B_{\lambda,v}^+(x_0))}
\|Q_{x_0,\lambda}^v(x)\|_{L^{r}(B_{\lambda,v}^+(x_0))}\nonumber\\
&\leq C\lambda^{-\frac{4}{3}-2\var}\lr\int_{\R^4}|x-x_0|^{\frac{16+48\var}{7-6\var}}v^{\frac{36}{7-6\var}}(x)\mathrm{d}x\rr^{\frac{7-6\var}{12}}
\|Q_{x_0,\lambda}^v(x)\|_{L^{r}(B_{\lambda,v}^+(x_0))}\nonumber\\
&\quad+ C(\eta(\varepsilon))\lambda^{-\frac{4}{3}-\frac{4}{t}}\lr\int_{\R^4}
|x-x_0|^{\frac{16t-96}{7t-12}}v^{\frac{36t}{7t-12}}(x)\mathrm{d}x\rr^{\frac{7t-12}{12t}}
\|Q_{x_0,\lambda}^v(x)\|_{L^{r}(B_{\lambda,v}^+(x_0))},
\end{align}
where $q=\frac{12t}{12+7t}\in(\frac{6}{5},\frac{12}{7}),r=\frac{12t}{12+5t}\in
(\frac{3}{2},\frac{12}{5})$. Since $\alpha\geq\frac{5}{p}$, from asymptotic properties of $(u,v,w)$ in \eqref{514}, \eqref{516} and \eqref{542eq}, we deduce that, for $\var\in(0,\frac{1}{2})$ sufficiently small and $t\in(\frac{2}{\var},+\infty)$ large enough, $\frac{e^{\frac{6}{5}pw}}{|x-x_0|^2}\in L^1(\R^4),|x-x_0|u^3\in L^1(\R^4),|x-x_0|^{\frac{16+48\var}{7-6\var}}v^{\frac{36}{7-6\var}}\in L^1(\R^4),|x-x_0|^{\frac{16t-96}{7t-12}}v^{\frac{36t}{7t-12}}\in L^1(\R^4)$. Thus, by \eqref{eq419}, \eqref{eq420} and \eqref{eq421}, we get
\begin{align*}
\|Q^{u}_{x_0,\la}(x)\|_{L^{q}(B_{\lambda,u}^{+}(x_0))}\leq C_{t,\var}\max\{\la^{-2\var},\la^{-\frac{4}{t}}\}\|Q^{u}_{x_0,\la}(x)\|_{L^{q}(B_{\lambda,u}^{+}(x_0))},
\end{align*}
\begin{align*}
\|Q^{v}_{x_0,\la}(x)\|_{L^{r}(B_{\lambda,v}^{+}(x_0))}\leq C_{t,\var}\max\{\la^{-2\var},\la^{-\frac{4}{t}}\}\|Q^{v}_{x_0,\la}(x)
\|_{L^{r}(B_{\lambda,v}^{+}(x_0))},
\end{align*}
\begin{align*}
\|Q^{w}_{x_0,\la}(x)\|_{L^{t}(B_{\lambda,w}^{+}(x_0))}\leq C_{t,\var}\max\{\la^{-2\var},\la^{-\frac{4}{t}}\}\|Q^{w}_{x_0,\la}(x)\|_{L^{t}(B_{\lambda,w}^{+}(x_0))}.
\end{align*}
Thus choose $\var\in(0,\frac{1}{2})$ small enough and $t\in(\frac{2}{\var},+\infty)$ large enough, there exists a $\Lambda_0>0$ big enough such that
\begin{align}
\|Q_{x_0,\lambda}^u(x)\|_{L^{q}(B_{\lambda,u}^+(x_0))}\leq\frac{1}{2}\|Q_{x_0,\lambda}^u(x)\|_{L^{q}(B_{\lambda,u}^+(x_0))}\label{eq426},\\
\|Q_{x_0,\lambda}^v(x)\|_{L^{r}(B_{\lambda,v}^+(x_0))}\leq\frac{1}{2}\|Q_{x_0,\lambda}^v(x)\|_{L^{r}(B_{\lambda,v}^+(x_0))}\label{eq427},\\
\|Q_{x_0,\lambda}^w(x)\|_{L^{t}(B_{\lambda,w}^+(x_0))}\leq\frac{1}{2}\|Q_{x_0,\lambda}^w(x)\|_{L^{t}(B_{\lambda,w}^+(x_0))}\label{eq428},
\end{align}
for all $\Lambda_0\leq\lambda<+\infty$. By \eqref{eq426}, \eqref{eq427} and \eqref{eq428}, we obtain
$$\|Q_{x_0,\lambda}^u(x)\|_{L^{q}(B_{\lambda,u}^+(x_0))}=\|Q_{x_0,\lambda}^v(x)\|_{L^{r}(B_{\lambda,v}^+(x_0))}=\|Q_{x_0,\lambda}^w(x)\|_{L^{t}(B_{\lambda,w}^+(x_0))}=0,$$
which implies $B_{\lambda,u}^+(x_0)=B_{\lambda,v}^+(x_0)=B_{\lambda,w}^+(x_0)=\emptyset$ for any $\Lambda_0\leq\lambda<+\infty$.

Thus, for all $\Lambda_0\leq\lambda<+\infty$,
$$Q^{u}_{x_0,\la}(x)\leq0,\ Q^{v}_{x_0,\la}(x)\leq0,\ Q^{w}_{x_0,\la}(x)\leq0,\quad \forall x\in B_{\lambda}(x_0)\backslash\{x_0\}.$$
This finishes step 1 for the case $\alpha\geq\frac{5}{p}$.

\smallskip

\textbf{Case 2:} $\frac{1}{p}\leq\alpha\leq\frac{5}{p}$. Denote
\begin{align*}
B_{\la,u}^-(x_0):=\{x\in B_\la(x_0)\backslash\{x_0\}\ |Q_{x_0,\la}^u(x)<0\},\\
B_{\la,v}^-(x_0):=\{x\in B_\la(x_0)\backslash\{x_0\}\ |Q_{x_0,\la}^v(x)<0\},\\
B_{\la,w}^-(x_0):=\{x\in B_\la(x_0)\backslash\{x_0\}\ |Q_{x_0,\la}^w(x)<0\}.
\end{align*} We will deduce that for $\lambda>0$ small enough,
\begin{align}\label{eq429}
Q_{x_0,\la}^u(x)\geq0,\quad Q_{x_0,\la}^v(x)\geq0\quad\text{and}\quad Q_{x_0,\la}^w(x)\geq0,\quad\quad\forall x\in
B_{\lambda}(x_0)\backslash\{x_0\},
\end{align}
which is equal to prove for $\la>0$ small enough,
\begin{align*}
B_{\lambda,u}^-(x_0)=B_{\lambda,v}^-(x_0)=B_{\lambda,w}^-(x_0)=\emptyset.
\end{align*}

We start moving the sphere $S_{\lambda}(x_0)$
from near the point $x_0$ outward such that \eqref{eq429} holds. From \eqref{eq614} and the mean value theorem, for any $x\in B_{\lambda,u}^-(x_0)$, we have
\begin{align}\label{eq431}
0>Q_{x_0,\lambda}^u(x)
&\geq\frac{1}{4\pi^2}\int_{B_{\lambda,w}^-(x_0)}\Bigg[\frac{1}{|x-y|^{3}}-\frac{1}{\left|\frac{\lambda(x-x_0)}{|x-x_0|}-\frac{|x-x_0|(y-x_0)}{\lambda}\right|^{3}}\Bigg]
\left(e^{pw_{x_0,\lambda}(y)}-e^{pw(y)}\right)\mathrm{d}y\nonumber\\
&\geq\frac{1}{4\pi^2}\int_{B_{\lambda,w}^-(x_0)}\frac{p}{|x-y|^{3}}e^{p\bar{\xi}_{x_0,\lambda}^w(y)}Q_{x_0,\lambda}^w(y)\mathrm{d}y\nonumber\\
&\geq\frac{p}{4\pi^2}\int_{B_{\lambda,w}^-(x_0)}\frac{e^{pw(y)}}{|x-y|^{3}}Q_{x_0,\lambda}^w(y)\mathrm{d}y,
\end{align}
where $w_{x_0,\lambda}(y)<\bar{\xi}_{x_0,\lambda}^w(y)<w(y)$, for any $y\in B_{\lambda,w}^-(x_0)$. From \eqref{eq615} and the mean value theorem, for any $x\in B_{\lambda,v}^-(x_0)$, one can deduce
\begin{align}\label{eq432}
0>Q_{x_0,\lambda}^v(x)
&\geq\frac{1}{4\pi^2}\int_{B_{\lambda,u}^-(x_0)}\left(\frac{1}{|x-y|^{2}}-\frac{1}{|\frac{\lambda(x-x_0)}{|x-x_0|}-\frac{|x-x_0|(y-x_0)}{\lambda}|^{2}}\right)
\left(u^2_{x_0,\lambda}(y)-u^2(y)\right)\mathrm{d}y\nonumber\\
&\geq\frac{1}{4\pi^2}\int_{B_{\lambda,u}^-(x_0)}\frac{2}{|x-y|^{2}}\bar{\xi}_{x_0,\lambda}^uQ_\lambda^u(y)\mathrm{d}y\nonumber\\
&\geq\frac{1}{2\pi^2}\int_{B_{\lambda,u}^-(x_0)}\frac{u(y)}{|x-y|^{2}}Q_{x_0,\lambda}^u(y)\mathrm{d}y,
\end{align}
where $u_{x_0,\lambda}(y)<\bar{\xi}_{x_0,\lambda}^u(y)<u(y)$, for any $y\in B_{\lambda,u}^-(x_0)$. Due to $\frac{1}{p}\leq\alpha\leq\frac{5}{p}$, from \eqref{a20}, \eqref{a21}, \eqref{eq616} and by the mean value theorem, for any given $\var>0$ small enough and $x\in B_{\lambda,w}^-(x_0)$, we have
\begin{align}\label{eq433}
0>Q_{x_0,\lambda}^w(x)
&\geq\frac{1}{8\pi^2}\int_{B_{\lambda,v}^-(x_0)}\ln\left(\frac{|\frac{\lambda(x-x_0)}{|x-x_0|}-\frac{|x-x_0|(y-x_0)}{\lambda}|}{|x-y|}\right)(v^4_{x_0,\lambda}(y)-v^4(y))\mathrm{d}y\nonumber\\
&\geq\frac{1}{4\pi^2}\int_{B_{\lambda,v}^-(x_0)\cap B_{\lambda\eta(\var)}(x_0)}\frac{\lambda^{2\var}}{|x-y|^{2\var}}(\bar{\xi}_{x_0,\lambda}^v(y))^3Q_{x_0,\lambda}^v(y)\mathrm{d}y\nonumber\\
&\quad+\frac{1}{4\pi^2}\ln\Big(1+\frac{1}{\eta^2(\var)}\Big)\int_{B_{\lambda,v}^-(x_0)\backslash B_{\lambda\eta(\var)}(x_0)}(\bar{\xi}_{x_0,\lambda}^v(y))^3Q_{x_0,\lambda}^v(y)\mathrm{d}y\nonumber\\
&\geq\frac{1}{4\pi^2}\int_{B_{\lambda,v}^-(x_0)\cap B_{\lambda\eta(\varepsilon)}(x_0)}\frac{\lambda^{2\var}}{|x-y|^{2\var}}v^3(y)Q_{x_0,\lambda}^v(y)\mathrm{d}y
\nonumber\\&\quad+
C(\eta(\varepsilon))\int_{B_{\lambda,v}^+(x_0)\backslash B_{\lambda\eta(\varepsilon)}(x_0)}v^3(y)Q_{x_0,\lambda}^v(y)\mathrm{d}y,
\end{align}
where $v_{x_0,\lambda}(y)<\bar{\xi}_{x_0,\lambda}^v(y)<v(y)$, for any $y\in B_{\lambda,v}^-(x_0)$. Take $\var\in(0,\frac{1}{2})$ small enough and $t\in(\frac{2}{\var},+\infty)$ large enough From Hardy-Littlewood-Sobolev inequality \eqref{hl}, H\"{o}lder inequality, \eqref{eq431}, \eqref{eq432} and \eqref{eq433}, we have
\begin{align}\label{eq639}
\|Q_{x_0,\lambda}^u(x)\|_{L^q(B_{\lambda,u}^-(x_0))}
&\leq C\|e^{pw(x)}Q_{x_0,\lambda}^w(x)\|_{L^{\frac{6t}{6+5t}}(B_{\lambda,w}^-(x_0))}\nonumber\\
&\leq C\|e^{pw(x)}\|_{L^{\frac{6}{5}}(B_{\lambda,w}^-(x_0))}\|Q_{x_0,\lambda}^w(x)
\|_{L^{t}(B_{\lambda,w}^-(x_0))}\nonumber\\
&\leq C\la^{\frac{10}{3}}\|Q_{x_0,\lambda}^w(x)\|_{L^{t}(B_{\lambda,w}^-(x_0))},
\end{align}
\begin{align}\label{eq640}
\|Q_{x_0,\lambda}^v(x)\|_{L^r(B_{\lambda,v}^-(x_0))}& \leq C\|u(x) Q_{x_0,\lambda}^u(x)\|_{L^{\frac{3q}{q+3}}(B_{\lambda,u}^-(x_0))}\nonumber\\
& \leq C\|u(x)\|_{L^{3}(B_{\lambda,u}^-(x_0))}\|Q_{x_0,\lambda}^u(x)\|
_{L^{q}(B_{\lambda,u}^-(x_0))}\nonumber\\
&\leq C\la^{\frac{4}{3}}\|Q_{x_0,\lambda}^u(x)\|_{L^{q}(B_{\lambda,u}^-(x_0))},
\end{align}
and
\begin{align}\label{eq641}
\|Q_{x_0,\lambda}^w(x)\|_{L^t(B_{\lambda,w}^-(x_0))}&\leq C\lambda^{2\var}\|v^3(x)Q_{x_0,\lambda}^v(x)\|
_{L^{\frac{2t}{2+(2-\var)t}}(B_{\lambda,v}^-(x_0))}\nonumber\\
&\quad+C(\eta(\varepsilon))|B_{\lambda,w}^-(x_0)|^{\frac{1}{t}}
\int_{B_{\lambda,v}^-(x_0)}v^3Q_{x_0,\lambda}^v(x)\mathrm{d}x\nonumber\\
&\leq C\lambda^{2\var}\|v(x)\|^3_{L^{\frac{36}{7-6\var}}(B_{\lambda,v}^-(x_0))}
\|Q_{x_0,\lambda}^v(x)\|_{L^{r}(B_{\lambda,v}^-(x_0))}\nonumber\\
&\quad+C(\eta(\varepsilon))\lambda^{\frac{4}{t}}\|v(x)\|^3_{L^{\frac{3r}{r-1}}
(B_{\lambda,v}^-(x_0))}
\|Q_{x_0,\lambda}^v(x)\|_{L^{r}(B_{\lambda,v}^-(x_0))}\nonumber\\
&\leq C\lambda^{\frac{7}{3}}
\|Q_{x_0,\lambda}^v(x)\|_{L^{r}(B_{\lambda,v}^-(x_0))},
\end{align}
where $q=\frac{12t}{12+7t}\in(\frac{6}{5},\frac{12}{7}),r=\frac{12t}{12+5t}\in
(\frac{3}{2},\frac{12}{5})$. Thus, by \eqref{eq639}, \eqref{eq640} and \eqref{eq641}, we have
\begin{align*}
\|Q^{u}_{x_0,\la}(x)\|_{L^{q}(B_{\lambda,u}^{-}(x_0))}\leq C\la^{7}\|Q^{u}_{x_0,\la}(x)\|_{L^{q}(B_{\lambda,u}^{-}(x_0))},
\end{align*}
\begin{align*}
\|Q^{v}_{x_0,\la}(x)\|_{L^{r}(B_{\lambda,v}^{-}(x_0))}\leq C\la^{7}\|Q^{v}_{x_0,\la}(x)\|_{L^{r}(B_{\lambda,v}^{-}(x_0))},
\end{align*}
\begin{align*}
\|Q^{w}_{x_0,\la}(x)\|_{L^{t}(B_{\lambda,w}^{-}(x_0))}\leq C\la^{7}\|Q^{w}_{x_0,\la}(x)\|_{L^{t}(B_{\lambda,w}^{-}(x_0))}.
\end{align*}
Then there exists a $\var_0$ small enough such that
\begin{align*}
\|Q_{x_0,\lambda}^u(x)\|_{L^{q}(B_{\lambda,u}^-(x_0))}\leq\frac{1}{2}\|Q_{x_0,\lambda}^u(x)\|
_{L^{q}(B_{\lambda,u}^-(x_0))},\\
\|Q_{x_0,\lambda}^v(x)\|_{L^{r}(B_{\lambda,v}^-(x_0))}\leq\frac{1}{2}\|Q_{x_0,\lambda}^v(x)\|
_{L^{r}(B_{\lambda,v}^-(x_0))},\\
\|Q_{x_0,\lambda}^w(x)\|_{L^{t}(B_{\lambda,w}^-(x_0))}\leq\frac{1}{2}\|Q_{x_0,\lambda}^w(x)\|
_{L^{t}(B_{\lambda,w}^-(x_0))}
\end{align*}
for all $0<\lambda\leq\var_0$. Furthermore,
$$\|Q_{x_0,\lambda}^u(x)\|_{L^{q}(B_{\lambda,u}^-(x_0))}
=\|Q_{x_0,\lambda}^v(x)\|_{L^{r}(B_{\lambda,v}^-(x_0))}
=\|Q_{x_0,\lambda}^w(x)\|_{L^{t}(B_{\lambda,w}^-(x_0))}=0,$$
which implies $B_{\lambda,u}^-(x_0)=B_{\lambda,v}^-(x_0)=B_{\lambda,w}^-(x_0)=\emptyset$ for any $0<\lambda\leq\var_0$.

Thus, for all $0<\lambda\leq\var_0$,
$$Q^{u}_{x_0,\la}(x)\geq0,\ Q^{v}_{x_0,\la}(x)\geq0,\ Q^{w}_{x_0,\la}(x)\geq0,\quad \forall x\in B_{\lambda}(x_0)\backslash\{x_0\}.$$
This completes step 1 for the case $\frac{1}{p}\leq\alpha\leq\frac{5}{p}$.

\medskip

\textbf{Step 2:} Move the sphere $S_{\lambda}$ outward or inward to the limiting position.

In what follows, we will derive contradictions in both the cases $\alpha>\frac{5}{p}$ and $\frac{1}{p}\leq\alpha<\frac{5}{p}$, and hence we must have $\alpha=\frac{5}{p}$.

\smallskip

\textbf{Case 1:} $\alpha>\frac{5}{p}$. Step 1 provides a starting point for the method of moving sphere. In this step, we will continue to reduce the radius $\la$ as long as \eqref{66} remains true. The critical scale $\lambda_{x_0}$ is defined by
\begin{align}
\lambda_{x_0}:=\inf\{\lambda>0\ |\ Q_{x_0,\mu}^u\leq0,\ Q_{x_0,\mu}^v\leq0,\ Q_{x_0,\mu}^w\leq0,\ \text{in}\ B_\mu(x_0)\backslash\{x_0\},\ \forall \ \lambda\leq\mu<+\infty\}.\label{eq434}
\end{align}
From step 1, we know that $\lambda_{x_0}$ is well defined. First, we deduce that in the case of $\alpha>\frac{5}{p}$, it must hold $\lambda_{x_0}=0$. In other words, if $\lambda_{x_0}>0$, we will show that the sphere $S_{\la_{x_0}}$ can continue to move inward a bit, which contradicts the definition of $\lambda_{x_0}$.

According to the definition \eqref{eq434} of $\lambda_{x_0}$, we have
$$Q_{x_0,\lambda_{x_0}}^u\leq0,\quad Q_{x_0,\lambda_{x_0}}^v\leq0,\quad Q_{x_0,\lambda_{x_0}}^w\leq0,\quad\text{in} \
B_{\lambda_{x_0}}(x_0)\backslash\{x_0\}.$$
From \eqref{eq616}, for any $x\in
B_{\lambda_{x_0}}(x_0)\backslash\{x_0\}$, it holds
\begin{align*}
Q_{x_0,\lambda_{x_0}}^w(x)&=\frac{1}{8\pi^2}\int_{B_{\lambda_{x_0}}(x_0)}
\ln\left(\frac{|\frac{\lambda_{x_0}(x-x_0)}{|x-x_0|}-\frac{|x-x_0|}{\lambda_{x_0}}
(y-x_0)|}{|x-y|}\right)
(v^4_{x_0,\lambda_{x_0}}(y)-v^4(y))\mathrm{d}y\nonumber\\
&\quad+\Big(\frac{5}{p}-\alpha\Big)\ln\frac{\lambda_{x_0}}{|x-x_0|}\nonumber\\
&\leq\Big(\frac{5}{p}-\alpha\Big)\ln\frac{\lambda_{x_0}}{|x-x_0|}
<0,
\end{align*}
which combines with \eqref{eq614} and \eqref{eq615} indicate that
\begin{align*}
Q_{x_0,\lambda_{x_0}}^u(x)=\frac{1}{4\pi^2}\int_{B_{\lambda_{x_0}}(x_0)}
\Bigg[\frac{1}{|x-y|^{3}}-\frac{1}{|\frac{\lambda_{x_0}(x-x_0)}{|x-x_0|}-\frac{|x-x_0|}
{\lambda_{x_0}}(y-x_0)|^{3}}\Bigg]
\left(e^{pw_{x_0,\lambda_{x_0}}(y)}-e^{pw(y)}\right)\mathrm{d}y
<0
\end{align*}
and
\begin{align*}
Q_{x_0,\lambda_{x_0}}^v(x)=\frac{1}{4\pi^2}\int_{B_{\lambda_{x_0}}(x_0)}
\Bigg[\frac{1}{|x-y|^{2}}
-\frac{1}{|\frac{\lambda_{x_0}(x-x_0)}{|x-x_0|}-\frac{|x-x_0|}{\lambda_{x_0}}(y-x_0)|^{2}}\Bigg]\left(u^2_{\lambda_{x_0}}(y)-u^2(y)\right)\mathrm{d}y
<0.
\end{align*}

Define the narrow region
\begin{align}\label{eq438}
A_{\delta_1}:=\{x\in\R^4\ |0<|x-x_0|<\delta_1 \ \text{or} \ \lambda_{x_0}-\delta_1<|x-x_0|<\lambda_{x_0}\}\subset B_{\lambda_{x_0}}(x_0)\backslash\{x_0\},
\end{align}
where $\delta_1>0$ small enough will be determined later. Since $Q_{x_0,\lambda_{x_0}}^u,\ Q_{x_0,\lambda_{x_0}}^v \ \text{and}\  Q_{x_0,\lambda_{x_0}}^w$ are continuous about $x$ in $\R^4\backslash\{x_0\}$ and $A_{\delta_1}^c:=(B_{\lambda_{x_0}}(x_0)\backslash\{x_0\})\backslash A_{\delta_1}$ is a compact subset, there exists a positive constant $c_0>0$ such that
$$Q_{x_0,\lambda_{x_0}}^u(x)<-c_0,\quad Q_{x_0,\lambda_{x_0}}^v(x)<-c_0\quad\text{and}\quad Q_{x_0,\lambda_{x_0}}^w(x)<-c_0,\quad \
\forall \ x\in A_{\delta_1}^c.$$
Due to the continuity of $Q_{x_0,\lambda_{x_0}}^u,\ Q_{x_0,\lambda_{x_0}}^v \ \text{and}\  Q_{x_0,\lambda_{x_0}}^w$ w.r.t. $\la$, we can choose $\delta_2>0$ small enough such that, for any $\la\in[\la_{x_0}-\delta_2,\la_{x_0}]$,
$$Q_{x_0,\lambda}^u(x)<-\frac{c_0}{2},\quad Q_{x_0,\lambda}^v(x)<-\frac{c_0}{2}\quad\text{and}\quad Q_{x_0,\lambda}^w(x)<-\frac{c_0}{2},\quad \
\forall \ x\in A_{\delta_1}^c,$$
which indicates that
\begin{align}\label{eq439}
B_{\lambda,u}^+(x_0)\cup B_{\lambda,v}^+(x_0)\cup B_{\lambda,w}^+(x_0)&\subset(B_{\lambda}(x_0)\backslash\{x_0\})\backslash A_{\delta_1}^c\nonumber\\
&=
\{x\in\R^4|0<|x-x_0|<\delta_1 \ \text{or} \ \lambda_{x_0}-\delta_1<|x-x_0|<\lambda\}
\end{align}
for any $\lambda\in[\lambda_{x_0}-\delta_2,\lambda_{x_0}]$. From \eqref{eq419}, \eqref{eq420} and \eqref{eq421}, we get
\begin{align}\label{eq440}
\|Q_{x_0,\lambda}^u(x)\|_{L^q(B_{\lambda,u}^+(x_0))} &\leq C_{\lambda,\var}\|e^{pw_{x_0,\lambda}(x)}\|_{L^{\frac{6}{5}}(B_{\lambda,w}^+(x_0))}
\|u_{x_0,\lambda}(x)\|_{L^{3}(B_{\lambda,u}^+(x_0))}
\|Q_{x_0,\lambda}^u(x)\|_{L^q(B_{\lambda,u}^+(x_0))}\nonumber\\
&\quad\times\lr\|v_{x_0,\lambda}(x)\|^3_{L^{\frac{36}{7-6\var}}(B_{\lambda,v}^+(x_0))}
+|B_{\lambda,w}^+(x_0)|^{\frac{1}{t}}\|v_{x_0,\lambda}(x)\|_{L^{\frac{3r}{r-1}}(B_{\lambda,v}^+(x_0))}\rr,
\end{align}
\begin{align}\label{eq441}
\|Q_{x_0,\lambda}^v(x)\|_{L^r(B_{\lambda,v}^+(x_0))} &\leq C_{\lambda,\var}\|e^{pw_{x_0,\lambda}(x)}\|_{L^{\frac{6}{5}}(B_{\lambda,w}^+(x_0))}
\|u_{x_0,\lambda}(x)\|_{L^{3}(B_{\lambda,u}^+(x_0))}
\|Q_{x_0,\lambda}^v(x)\|_{L^r(B_{\lambda,v}^+(x_0))}\nonumber\\
&\quad\times\lr\|v_{x_0,\lambda}(x)\|^3_{L^{\frac{36}{7-6\var}}(B_{\lambda,v}^+(x_0))}
+|B_{\lambda,w}^+(x_0)|^{\frac{1}{t}}
\|v_{x_0,\lambda}(x)\|_{L^{\frac{3r}{r-1}}(B_{\lambda,v}^+(x_0))}\rr
\end{align}
and
\begin{align}\label{eq442}
\|Q_{x_0,\lambda}^w(x)\|_{L^t(B_{\lambda,w}^+(x_0))} &\leq C_{\lambda,\var}\|e^{pw_{x_0,\lambda}(x)}\|_{L^{\frac{6}{5}}(B_{\lambda,w}^+(x_0))}
\|u_{x_0,\lambda}(x)\|_{L^{3}(B_{\lambda,u}^+(x_0))}
\|Q_{x_0,\lambda}^w(x)\|_{L^t(B_{\lambda,w}^+(x_0))}\nonumber\\
&\quad\times\lr\|v_{x_0,\lambda}(x)\|^3_{L^{\frac{36}{7-6\var}}(B_{\lambda,v}^+(x_0))}
+|B_{\lambda,w}^+(x_0)|^{\frac{1}{t}}
\|v_{x_0,\lambda}(x)\|_{L^{\frac{3r}{r-1}}(B_{\lambda,v}^+(x_0))}\rr,
\end{align}
where $q=\frac{12t}{12+7t}, r=\frac{12t}{12+5t}$. Since $\alpha\geq\frac{5}{p}$, from asymptotic properties of $(u,v,w)$ in \eqref{514}, \eqref{516} and \eqref{542eq}, we deduce that, for $\var\in(0,\frac{1}{2})$ sufficiently small and $t\in(\frac{2}{\var},+\infty)$ large enough, $e^{pw_{x_0,\lambda}}\in L^{\frac{6}{5}}(\R^4)$, $u_{x_0,\la}\in L^{3}(\R^4)$ and $v_{x_0,\la}\in L^{\frac{3r}{r-1}}(\R^4)\cap L^{\frac{36}{7-6\var}}(\R^4)$. According to \eqref{eq439}, we can choose $\delta_1$ and $\delta_2$ sufficiently small such that, for any $\lambda\in[\lambda_{x_0}-\delta_2,\lambda_{x_0}]$,
\begin{align*}
&\quad\quad C_{\lambda,\var}\|e^{pw_{x_0,\lambda}(x)}\|_{L^{\frac{6}{5}}(B_{\lambda,w}^+(x_0))}
\|u_{x_0,\lambda}(x)\|_{L^{3}(B_{\lambda,u}^+(x_0))}\nonumber\\
&\times\lr\|v_{x_0,\lambda}(x)\|^3_{L^{\frac{36}{7-6\var}}(B_{\lambda,v}^+(x_0))}
+|B_{\lambda,w}^+(x_0)|^{\frac{1}{t}}
\|v_{x_0,\lambda}(x)\|_{L^{\frac{3r}{r-1}}(B_{\lambda,v}^+(x_0))}\rr<\frac{1}{2},
\end{align*}
Combining this with \eqref{eq440}, \eqref{eq441} and \eqref{eq442}, we have, for any $\lambda\in[\lambda_{x_0}-\delta_2,\lambda_{x_0}]$,
$$\|Q_{x_0,\lambda}^u(x)\|_{L^{q}(B_{\lambda,u}^+(x_0))}=\|Q_{x_0,\lambda}^v(x)\|_{L^{r}(B_{\lambda,v}^+(x_0))}=\|Q_{x_0,\lambda}^w(x)\|_{L^{t}(B_{\lambda,w}^+(x_0))}=0.$$
It follows immediately that, for any $\lambda\in[\lambda_{x_0}-\delta_2,\lambda_{x_0}]$,
$$Q_{x_0,\lambda}^u(x)\leq0,\quad Q_{x_0,\lambda}^v(x)\leq0\quad\text{and}\quad Q_{x_0,\lambda}^w(x)\leq0,\quad\quad\forall x\in
B_{\lambda}(x_0)\backslash\{x_0\},$$
which contradicts the definition of $\lambda_{x_0}$. Thus we must have $\lambda_{x_0}=0$.

As a consequence, from Lemma \ref{le10} (ii), replacing $v$ by $-v$, we derive $v\equiv C$. Recall the finite total curvature condition $v^4\in L^1(\R^4)$, it follows that $v\equiv0$. However, from the $4$-D system \eqref{eq41}, one has $0=e^{pw(x)}>0$ in $\R^4$, which is a contradiction and hence $\alpha>\frac{5}{p}$ is false.

\smallskip

\textbf{Case 2:} $\frac{1}{p}\leq\alpha<\frac{5}{p}$. In this case, we define the critical scale $\la_{x_0}$ by
\begin{align}\label{eq443}
\la_{x_0}:=\sup\{\lambda>0\ |\ Q_{x_0,\mu}^u\geq0,\ Q_{x_0,\mu}^v\geq0,\ Q_{x_0,\mu}^w\geq0,\ \text{in}\ \ B_\mu(x_0)\backslash\{x_0\},\ \forall \ 0<\mu\leq\lambda\}.
\end{align}
From step 1 it can be seen that $\la_{x_0}$ is well defined and  $0<\la_{x_0}\leq+\infty$ for any $x_{0}\in\mathbb{R}^{4}$. We will show that $\lambda_{x_0}=+\infty$, which will lead to a contradiction again as in $\mathbf{Case\ 1:}$ $\alpha>\frac{5}{p}$. Suppose on the contrary that $\la_{x_0}<\infty$, we will prove that the sphere can be moved outward a bit further, which contradicts the definition of $\la_{x_0}$.

According to the definition of $\la_{x_0}$, we have
$$Q_{x_0,\la_{x_0}}^u\geq0,\quad Q_{x_0,\la_{x_0}}^v\geq0,\quad Q_{x_0,\la_{x_0}}^w\geq0,\quad\text{in} \
B_{\la_{x_0}}(x_0)\backslash\{x_0\}.$$
From \eqref{eq616}, for any $x\in
B_{\la_{x_0}}(x_0)\backslash\{x_0\}$, it holds
\begin{align*}
Q_{x_0,\la_{x_0}}^w(x)&=\frac{1}{8\pi^2}\int_{B_{\la_{x_0}}(x_0)}
\ln\left(\frac{|\frac{\la_{x_0}(x-x_0)}{|x-x_0|}-\frac{|x-x_0|(y-x_0)}{\la_{x_0}}|}
{|x-y|}\right)(v^4_{x_0,\lambda}(y)-v^4(y))\mathrm{d}y\\
&\quad+\Big(\frac{5}{p}-\alpha\Big)\ln\frac{\lambda_{x_0}}{|x-x_0|}\nonumber\\
&\geq\Big(\frac{5}{p}-\alpha\Big)\ln\frac{\la_{x_0}}{|x-x_0|}
>0,
\end{align*}
which combines with \eqref{eq614} and \eqref{eq615} indicate that
\begin{align*}
Q_{x_0,\la_{x_0}}^u(x)=\frac{1}{4\pi^2}\int_{B_{\la_{x_0}}(x_0)}
\Bigg(\frac{1}{|x-y|^{3}}
-\frac{1}{\left|\frac{\la_{x_0}(x-x_0)}{|x-x_0|}-\frac{|x-x_0|(y-x_0)}{\la_{x_0}}\right|^{3}}\Bigg)
\left(e^{pw_{x_0,\lambda}(y)}-e^{pw(y)}\right)\mathrm{d}y
>0
\end{align*}
and
\begin{align*}
Q_{x_0,\la_{x_0}}^v(x)=\frac{1}{4\pi^2}
\int_{B_{\la_{x_0}}(x_0)}\Bigg(\frac{1}{|x-y|^{2}}-\frac{1}{\left|
\frac{\la_{x_0}(x-x_0)}{|x-x_0|}-\frac{|x-x_0|(y-x_0)}{\la_{x_0}}\right|^{2}}
\Bigg)\left(u^2_{x_0,\lambda}(y)-u^2(y)\right)\mathrm{d}y
>0.
\end{align*}
Recall that the narrow region $A_{\delta_1}$ is defined in \eqref{eq438}. Since $A_{\delta_1}^c:=(B_{\lambda_{x_0}}(x_0)\backslash\{x_0\})\backslash A_{\delta_1}$ is a compact subset and $Q_{x_0,\lambda_{x_0}}^u,\ Q_{x_0,\lambda_{x_0}}^v \ \text{and}\  Q_{x_0,\lambda_{x_0}}^w$ are continuous w.r.t. $x$ in $\R^4\backslash\{x_0\}$, there exists a positive constant $C_1>0$ such that
$$Q_{x_0,\lambda_{x_0}}^u(x)>C_1,\quad Q_{x_0,\lambda_{x_0}}^v(x)>C_1\quad\text{and}\quad Q_{x_0,\lambda_{x_0}}^w(x)>C_1,\quad
\forall \ x\in A_{\delta_1}^c.$$
Furthermore, we can choose $\delta_2>0$ small enough such that, for any $\la\in[\la_{x_0},\la_{x_0}+\delta_2]$,
$$Q_{x_0,\lambda}^u(x)>\frac{C_1}{2},\quad Q_{x_0,\lambda}^v(x)>\frac{C_1}{2}\quad\text{and}\quad Q_{x_0,\lambda}^w(x)>\frac{C_1}{2},\quad \
\forall \ x\in A_{\delta_1}^c,$$
which implies that
\begin{align*}
B_{\lambda,u}^-(x_0)\cup B_{\lambda,v}^-(x_0)\cup B_{\lambda,w}^-(x_0)&\subset(B_{\lambda_{x_0}}(x_0)\backslash\{x_0\})\backslash A_{\delta_1}^c\nonumber\\
&=
\{x\in\R^4|0<|x-x_0|<\delta_1 \ \text{or} \ \lambda_{x_0}-\delta_1<|x-x_0|<\lambda\}
\end{align*}
for any $\la\in[\la_{x_0},\la_{x_0}+\delta_2]$. By \eqref{eq639}, \eqref{eq640} and \eqref{eq641}, we obtain
\begin{align}\label{eq659}
\|Q_{x_0,\lambda}^u(x)\|_{L^q(B_{\lambda,u}^-(x_0))} &\leq C_{\lambda,\var}\|e^{pw(x)}\|_{L^{\frac{6}{5}}(B_{\lambda,w}^-(x_0))}
\|u(x)\|_{L^{3}(B_{\lambda,u}^-(x_0))}\|Q_{x_0,\lambda}^u(x)\|_{L^q(B_{\lambda,u}^-(x_0))} \nonumber\\
&\quad\times\lr\|v(x)\|^3_{L^{\frac{36}{7-6\var}}(B_{\lambda,v}^-(x_0))}
+|B_{\lambda,w}^-(x_0)|^{\frac{1}{t}}\|v(x)\|^3_{L^{\frac{3r}{r-1}}(B_{\lambda,v}^-(x_0))}\rr,
\end{align}
\begin{align}\label{eq660}
\|Q_{x_0,\lambda}^v(x)\|_{L^r(B_{\lambda,v}^-(x_0))} &\leq C_{\lambda,\var}\|e^{pw(x)}\|_{L^{\frac{6}{5}}(B_{\lambda,w}^-(x_0))}
\|u(x)\|_{L^{3}(B_{\lambda,u}^-(x_0))}\|Q_{x_0,\lambda}^v(x)\|_{L^r(B_{\lambda,v}^-(x_0))} \nonumber\\
&\quad\times\lr\|v(x)\|^3_{L^{\frac{36}{7-6\var}}(B_{\lambda,v}^-(x_0))}
+|B_{\lambda,w}^-(x_0)|^{\frac{1}{t}}\|v(x)\|^3_{L^{\frac{3r}{r-1}}(B_{\lambda,v}^-(x_0))}\rr
\end{align}
and
\begin{align}\label{eq661}
\|Q_{x_0,\lambda}^w(x)\|_{L^t(B_{\lambda,w}^-(x_0))} &\leq C_{\lambda,\var}\|e^{pw(x)}\|_{L^{\frac{6}{5}}(B_{\lambda,w}^-(x_0))}
\|u(x)\|_{L^{3}(B_{\lambda,u}^-(x_0))}\|Q_{x_0,\lambda}^w(x)\|_{L^t(B_{\lambda,w}^-(x_0))} \nonumber\\
&\quad\times\lr\|v(x)\|^3_{L^{\frac{36}{7-6\var}}(B_{\lambda,v}^-(x_0))}
+|B_{\lambda,w}^-(x_0)|^{\frac{1}{t}}\|v(x)\|^3_{L^{\frac{3r}{r-1}}(B_{\lambda,v}^-(x_0))}\rr.
\end{align}
From the local boundness of $u$, $v$ and $w$, one can choose $\delta_1$ and $\delta_2$ small enough such that, for any $\lambda\in[\lambda_{x_0},\lambda_{x_0}+\delta_2]$,
\begin{align*}
&\quad\quad C_{\lambda,\var}\|e^{pw(x)}\|_{L^{\frac{6}{5}}(B_{\lambda,w}^-(x_0))}
\|u(x)\|_{L^{3}(B_{\lambda,u}^-(x_0))}\\
&\times\lr\|v(x)\|^3_{L^{\frac{36}{7-6\var}}(B_{\lambda,v}^-(x_0))}
+|B_{\lambda,w}^-(x_0)|^{\frac{1}{t}}\|v(x)\|^3_{L^{\frac{3r}{r-1}}
(B_{\lambda,v}^-(x_0))}\rr
<\frac{1}{2},
\end{align*}
which combines with \eqref{eq659}, \eqref{eq660} and \eqref{eq661} indicate that, for any $\lambda\in[\lambda_{x_0},\lambda_{x_0}+\delta_2]$,
$$\|Q_{x_0,\lambda}^u(x)\|_{L^{q}(B_{\lambda,u}^-(x_0))}=\|Q_{x_0,\lambda}^v(x)\|_{L^{r}(B_{\lambda,v}^-(x_0))}=\|Q_{x_0,\lambda}^w(x)\|_{L^{t}(B_{\lambda,w}^-(x_0))}=0.$$
Consequently, for any $\lambda\in[\lambda_{x_0},\lambda_{x_0}+\delta_2]$,
$$Q_{x_0,\lambda}^u(x)\geq0,\quad Q_{x_0,\lambda}^v(x)\geq0\quad\text{and}\quad Q_{x_0,\lambda}^w(x)\geq0,\quad\quad\forall x\in
B_{\lambda}(x_0)\backslash\{x_0\}.$$
This contradicts the definition \eqref{eq443} of $\lambda_{x_0}$. Hence we must have $\lambda_{x_0}=+\infty$.

From Lemma \ref{le10} (ii) and the finite total curvature condition $v^4\in L^1(\R^4)$, we obtain $v\equiv0$. However, by the $4$-D system \eqref{eq41}, we have $0=e^{pw(x)}>0$ in $\R^4$, which yields again a contradiction. Thus $\frac{1}{p}\leq\alpha<\frac{5}{p}$ is absurd.

\medskip

From the contradictions derived in both cases 1 and 2, we conclude
\begin{align}\label{eq662}
\alpha:=\frac{1}{8\pi^2}\int_{\mathbb{R}^{4}}v^{4}(y)\mathrm{d}y=\frac{5}{p}.
\end{align}
By Step 1, we infer that, for $\lambda>0$ large, there holds
\begin{equation}\label{ddz2}
Q^{u}_{x_0,\la}(x)\leq0,\ Q^{v}_{x_0,\la}(x)\leq0,\ Q^{w}_{x_0,\la}(x)\leq0 \quad\ \text{in}\ B_{\lambda}(x_0)\backslash\{x_0\},
\end{equation}
while for $\lambda>0$ small,
\begin{equation}\label{d24d}
Q^{u}_{x_0,\la}(x)\geq0,\ Q^{v}_{x_0,\la}(x)\geq0,\ Q^{w}_{x_0,\la}(x)\geq0 \quad\ \text{in}\ B_{\lambda}(x_0)\backslash\{x_0\}.
\end{equation}
If the critical scale (defined in \eqref{eq443}) $\la_{x_0}<+\infty$, we must get $Q_{x_0,\lambda_{x_0}}^{u}(x)=Q_{x_0,\lambda_{x_0}}^{v}(x)=Q_{x_0,\lambda_{x_0}}^{w}(x)=0 \,\, \text{in}\ B_{\lambda_{x_0}}(x_0)\backslash\{x_0\}$. Otherwise, the sphere $S_\la$ can also be moved outward a bit further such that \eqref{d24d} still holds (see Case 1 or Case 2 in Step 2), which contradicts the definition \eqref{eq443} of $\la_{x_0}$. If the critical scale (defined in \eqref{eq443}) $\la_{x_0}=+\infty$, from \eqref{ddz2}, we obtain $Q^{u}_{x_0,\la}(x)=Q^{v}_{x_0,\la}(x)=Q^{w}_{x_0,\la}(x)=0\ \text{in}\ B_{\lambda}(x_0)\backslash\{x_0\}$ for $\la$ large enough. As a consequence, for arbitrary $x_{0}\in\mathbb{R}^{4}$, there exists a $\la>0$ depending on $x_0$ such that
$$Q^{u}_{x_0,\la}(x)=Q^{v}_{x_0,\la}(x)=Q^{w}_{x_0,\la}(x)=0,\qquad\forall x\in B_{\lambda}(x_0)\backslash\{x_0\}.$$

Therefore, we deduce from the Lemma \ref{le10} (i) that, for some $C_{1},C_{2}\in \mathbb{R}$, $\mu>0$ and $x_{0}\in \mathbb{R}^{3}$, $u,v$ must be of the form
\begin{equation}\label{553}u(x)=\left(\frac{C_{1}\mu}{1+\mu^{2}|x-x_{0}|^{2}}\right)^
\frac{3}{2},\qquad \forall\ x\in \mathbb{R}^{4}\end{equation}
and
\begin{equation}\label{554}v(x)=\frac{C_{2}\mu}{1+\mu^{2}|x-x_{0}|^{2}},\qquad \forall\ x\in \mathbb{R}^{4}.\end{equation}
Combining this with the system \eqref{eq41} and the asymptotic behavior \eqref{419} indicate
\begin{equation}\label{555}w(x)=\frac{5}{2p}\ln\left(\frac{C_{3}\mu}{1+\mu^{2}|x-x_{0}|^{2}}\right),\qquad \forall\ x\in \mathbb{R}^{4},\end{equation}
where $C_3>0$ is a constant. By direct calculations, \eqref{eq662} and \eqref{554}, we obtain
$C_2=\left(\frac{480}{p}\right)^{\frac{1}{4}}$ and hence
$$v(x)=\frac{\left(\frac{480}{p}\right)^{\frac{1}{4}}\mu}{1+\mu^{2}|x-x_{0}|^{2}},\qquad \forall\ x\in \mathbb{R}^{4}.$$
Combining the asymptotic behavior \eqref{542eq} with \eqref{553}, we get
$C_1=\left(\frac{2^{17}5}{p}\right)^{\frac{1}{12}}$ and then
\begin{equation*}
u(x)=\left(\frac{\left(\frac{2^{17}5}{p}\right)^{\frac{1}{12}}\mu}{1+\mu^{2}|x-x_{0}|^{2}}\right)^\frac{3}{2},\qquad \forall\ x\in \mathbb{R}^{4}.\end{equation*}
Similarly, by combining the asymptotic behavior \eqref{516} with \eqref{555}, we have
$C_3=\left(\frac{2^{49}3^85}{p}\right)^{\frac{1}{20}}$ and hence
$$w(x)=\frac{5}{2p}\ln\left(\frac{\left(\frac{2^{49}3^85}{p}\right)^{\frac{1}{20}}\mu}{1+\mu^{2}|x-x_{0}|^{2}}\right),\qquad \forall\ x\in \mathbb{R}^{4}.$$
This concludes our proof of Theorem \ref{thm1}.


\begin{thebibliography}{99}

\bibitem{Be} J. Bertoin, {\it L\'{e}vy Processes}, Cambridge Tracts in Mathematics, \textbf{121}, Cambridge University Press, Cambridge, 1996.

\bibitem{BKN} K. Bogdan, T. Kulczycki and A. Nowak, {\it Gradient estimates for harmonic and $q$-harmonic functions of symmetric stable processes}, Illinois J. Math., \textbf{46} (2002), 541-556.

\bibitem{Branson1} T. P. Branson, {\it Sharp inequality, the functional determinant and the complementary series}, Trans. Amer. Math. Soc., \textbf{347} (1995), 3671-3742.

\bibitem{BF} H. Brezis and F. Merle, {\it Uniform estimates and blow-up behavior for solutions of $-\Delta u=V(x)e^{u}$ in two dimensions}, Commun. Partial Differential Equations, \textbf{16} (1991), 1223-1253.

\bibitem{CT} X. Cabr\'{e} and J. Tan, {\it Positive solutions of nonlinear problems involving the square root of the Laplacian}, Adv. Math., \textbf{224} (2010), 2052-2093.

\bibitem{CGS} L. Caffarelli, B. Gidas and J. Spruck, {\it Asymptotic symmetry and local behavior of semilinear elliptic equations with critical Sobolev growth}, Comm. Pure Appl. Math., \textbf{42} (1989), 271-297.

\bibitem{CS} L. Caffarelli and L. Silvestre, {\it An extension problem related to the fractional Laplacian}, Commun. Partial Differential Equations, \textbf{32} (2007), no. 7-9, 1245-1260.

\bibitem{CV} L. Caffarelli and L. Vasseur, {\it Drift diffusion equations with fractional diffusion and the quasi-geostrophic equation}, Annals of Math., \textbf{171} (2010), no. 3, 1903-1930.

\bibitem{C} D. Cao, {\it Nontrivial solution of semilinear elliptic equation with critical exponent in $\mathbb{R}^{2}$}, Comm. Partial Differential Equations, \textbf{17} (1992), no. 3-4, 407-435.

\bibitem{CD} D. Cao and W. Dai, {\it Classification of nonnegative solutions to a bi-harmonic equation with Hartree type nonlinearity}, Proc. Royal Soc. Edinburgh-A: Math., \textbf{149} (2019), 979-994.

\bibitem{CDQ0} D. Cao, W. Dai and G. Qin, {\it Super poly-harmonic properties, Liouville theorems and classification of nonnegative solutions to equations involving higher-order fractional Laplacians}, Trans. Amer. Math. Soc., \textbf{374} (2021), no. 7, 4781-4813.

\bibitem{CC} J. Case and S.-Y. A. Chang, {\it On fractional GJMS operators}, Comm. Pure Appl. Math., \textbf{69} (2016), no. 6, 1017-1061.

\bibitem{CG} S.-Y. A. Chang and M. Gonz\'{a}lez, {\it Fractional Laplacian in conformal geometry}, Adv. Math., \textbf{226} (2011), no. 2, 1410-1432.

\bibitem{CY} S.-Y. A. Chang and P. C. Yang, {\it On uniqueness of solutions of $n$-th order differential equations in conformal geometry}, Math. Res. Lett., \textbf{4} (1997), 91-102.

\bibitem{CK} S. Chanillo and M. K.-H. Kiessling, {\it Conformally invariant systems of nonlinear PDE of Liouville type}, Geom. Funct. Anal., \textbf{5} (1995), no. 6, 924-947.

\bibitem{CL} W. Chen and C. Li, {\it Classification of solutions of some nonlinear elliptic equations}, Duke Math. J., \textbf{63} (1991), no. 3, 615-622.

\bibitem{CL1} W. Chen and C. Li, {\it On Nirenberg and related problems - a necessary and sufficient condition}, Comm. Pure Appl. Math., \textbf{48} (1995), 657-667.

\bibitem{CL0} W. Chen and C. Li, {\it Moving planes, moving spheres, and a priori estimates}, J. Differential Equations, \textbf{195} (2003), no. 1, 1-13.

\bibitem{CL2} W. Chen and C. Li, {\it Methods on nonlinear elliptic equations}, AIMS Series on Differential Equations \& Dynamical Systems, \textbf{4}, American Institute of Mathematical Sciences (AIMS), Springfield, MO, 2010, xii+299 pp, ISBN: 978-1-60133-006-2; 1-60133-006-5.

\bibitem{CLL} W. Chen, C. Li and Y. Li, {\it A direct method of moving planes for the fractional Laplacian}, Adv. Math., \textbf{308} (2017), 404-437.

\bibitem{CLM} W. Chen, Y. Li and P. Ma, {\it The fractional Laplacian}, World Scientific Publishing Co. Pte. Ltd., Hackensack, NJ, [2020], \copyright 2020, 331 pp, ISBN: [9789813223998]; [9789813224001]; [9789813224018].

\bibitem{CLO} W. Chen, C. Li and B. Ou, {\it Classification of solutions for an integral equation}, Comm. Pure Appl. Math., \textbf{59} (2006), 330-343.

\bibitem{CLZ} W. Chen, Y. Li and R. Zhang, {\it A direct method of moving spheres on fractional order equations}, J. Funct. Anal., \textbf{272} (2017), no. 10, 4131-4157.

\bibitem{Co} P. Constantin, {\it Euler equations, Navier-Stokes equations and turbulence, in Mathematical Foundation of Turbulent Viscous Flows}, Vol. 1871 of Lecture Notes in Math., 1-43, Springer, Berlin, 2006.

\bibitem{DFQ} W. Dai, Y. Fang and G. Qin, {\it Classification of positive solutions to fractional order Hartree equations via a direct method of moving planes}, J. Differential Equations, \textbf{265} (2018), 2044-2063.

\bibitem{DF} W. Dai and J. Fu, {\it On properties of positive solutions to nonlinear tri-harmonic and bi-harmonic equations with negative exponents}, Bull. Math. Sci., \textbf{12} (2022), no. 3, Paper No. 2250007, 48 pp.

\bibitem{DHL} W. Dai, Y. Hu and Z. Liu, {\it Sharp reversed Hardy-Littlewood-Sobolev inequality with extension kernel}, Studia Math., \textbf{271} (2023), no. 1, 1-38.

\bibitem{DL} W. Dai and Z. Liu, {\it Classification of nonnegative solutions to static Schr\"{o}dinger-Hartree and Schr\"{o}dinger-Maxwell equations with combined nonlinearities}, Calc. Var. Partial Differential Equations, \textbf{58} (2019), no. 4, Paper No. 156, 24 pp.

\bibitem{DLQ} W. Dai, Z. Liu and G. Qin, {\it Classification of nonnegative solutions to static Schr\"{o}dinger-Hartree-Maxwell type equations}, SIAM J. Math. Anal., \textbf{53} (2021), no. 2, 1379-1410.

\bibitem{DQ} W. Dai and G. Qin, {\it Classification of nonnegative classical solutions to third-order equations}, Adv. Math., \textbf{328} (2018), 822-857.

\bibitem{DQ0} W. Dai and G. Qin, {\it Liouville type theorems for fractional and higher order H\'{e}non-Hardy type equations via the method of scaling spheres}, Int. Math. Res. Not. IMRN, \textbf{2023} (2023), no. 11, 9001-9070.

\bibitem{DQ2} W. Dai and G. Qin, {\it Classification of solutions to conformally invariant systems with mixed order and exponentially increasing or nonlocal nonlinearity}, SIAM J. Math. Anal., \textbf{55} (2023), no. 3, 2111-2149.

\bibitem{DQ1} W. Dai and G. Qin, {\it Method of scaling spheres: Liouville theorems in general bounded or unbounded domains, blowing-up analysis on not necessarily $C^1$-smooth domains and other applications}, 2023, preprint, submitted for publication, arXiv: 2302.13988, 78 pp.

\bibitem{DGZ} J. Dou, Q. Guo and M. Zhu, {\it Subcritical approach to sharp Hardy-Littlewood-Sobolev type inequalities on the upper half space}, Adv. Math., \textbf{312} (2017), 1-45.

\bibitem{DZ} J. Dou and M. Zhu, {\it Sharp Hardy-Littlewood-Sobolev inequality on the upper half space}, Int. Math. Res. Not. IMRN, 2015, no. 3, 651-687.

\bibitem{Fall} M. M. Fall, {\it Entire $s$-harmonic functions are affine}, Proc. Amer. Math. Soc., \textbf{144} (2016), 2587-2592.

\bibitem{FK} R. L. Frank and T. K\"{o}nig, {\it Classification of positive singular solutions to a nonlinear biharmonic equation with critical exponent}, Anal. \& PDE, \textbf{12} (2019), no. 4, 1101-1113.

\bibitem{FKT} R. L. Frank, T. K\"{o}nig and H. Tang, {\it Classification of solutions of an equation related to a conformal $\log$ Sobolev inequality}, Adv. Math., \textbf{375} (2020), 107395, 27 pp.

\bibitem{FLS} R. L. Frank, E. Lenzmann and L. Silvestre, {\it Uniqueness of radial solutions for the fractional Laplacian}, Comm. Pure Appl. Math., \textbf{69} (2016), no. 9, 1671-1726.

\bibitem{FL1} R. L. Frank and E. H. Lieb, {\it Inversion positivity and the sharp Hardy-Littlewood-Sobolev inequality}, Calc. Var. \& Partial Differential Equations, \textbf{39} (2010), 85-99.

\bibitem{FL} R. L. Frank and E. H. Lieb, {\it A new, rearrangement-free proof of the sharp Hardy-Littlewood-Sobolev inequality}, in Spectral Theory, Function Spaces and Inequalities, Oper. Theory Adv. Appl., \textbf{219}, Springer Basel, Basel, Switzerland, 2012, 55-67.

\bibitem{GNN1} B. Gidas, W. Ni and L. Nirenberg, {\it Symmetry and related properties via maximum principle}, Commun. Math. Phys., \textbf{68} (1979), 209-243.

\bibitem{GJMS} C. Graham, R. Jenne, L. Mason and G. Sparling, {\it Conformally invariant powers of the Laplacian. I. Existence}, J. London Math. Soc., \textbf{46} (1992), no. 3, 557-565.

\bibitem{GP} Y. Guo and S. Peng, {\it Classification of solutions for mixed order conformally system with Hartree-type nonlinearity in $\mathbb{R}^{n}$}, Bull. Math. Sci., \textbf{13} (2023), no. 2, Paper No. 2350002, 34 pp.

\bibitem{GP1} Y. Guo and S. Peng, {\it Asymptotic Behavior and Classification of Solutions to Hartree Type Equations with Exponential Nonlinearity}, J. Geom. Anal., \textbf{34} (2024), no. 1, Paper No. 23, 21 pp.

\bibitem{HN} G. Huang and Y. Niu, {\it Classification of solutions for some mixed order elliptic system}, Discrete Contin. Dyn. Syst. - A, \textbf{43} (2023), no. 11, 4069-4098.

\bibitem{JLX1} T. Jin, Y. Y. Li and J. Xiong, {\it On a fractional Nirenberg problem, part I: blow up analysis and compactness of solutions}, J. Eur. Math. Soc., \textbf{16} (2014), no. 6, 1111-1171.

\bibitem{JLX} Q. Jin, Y. Y. Li and H. Xu, {\it Symmetry and Asymmetry: The Method of Moving Spheres}, Adv. Differential Equations, \textbf{13} (2007), no. 7, 601-640.

\bibitem{Juhl} A. Juhl, {\it Explicit formulas for GJMS-operators and $Q$-curvatures}, Geom. Funct. Anal., \textbf{23} (2013), no. 4, 1278-1370.

\bibitem{K} T. Kulczycki, {\it Properties of Green function of symmetric stable processes}, Probability and Mathematical Statistics, \textbf{17} (1997), 339-364.

\bibitem{Lieb} E. H. Lieb, {\it Sharp constants in the Hardy-Littlewood-Sobolev and related inequalities}, Ann. of Math. (2), \textbf{118} (1983), 349-374.

\bibitem{Lin} C. S. Lin, {\it A classification of solutions of a conformally invariant fourth order equation in $\mathbb{R}^{n}$}, Comment. Math. Helv., \textbf{73} (1998), 206-231.

\bibitem{Li} Y. Y. Li, {\it Remark on some conformally invariant integral equations: the method of moving spheres}, J. European Math. Soc., \textbf{6} (2004), 153-180.

\bibitem{LZ1} Y. Y. Li and L. Zhang, {\it Liouville type theorems and Harnack type inequalities for semilinear elliptic equations}, J. Anal. Math, \textbf{90} (2003), 27-87.

\bibitem{LZ} Y. Y. Li and M. Zhu, {\it Uniqueness theorems through the method of moving spheres}, Duke Math. J., \textbf{80} (1995), 383-417.

\bibitem{MZ} L. Ma and L. Zhao, {\it Classification of positive solitary solutions of the nonlinear Choquard equation}, Arch. Rational Mech. Anal., \textbf{195} (2010), no. 2, 455-467.

\bibitem{N} Qu$\acute{\hat{o}}$c Anh Ng\^{o}, {\it Classification of entire solutions of $(-\Delta)^{N}u+u^{-(4N-1)}=0$ with exact linear growth at infinity in $\mathbb{R}^{2N-1}$}, Proc. Amer. Math. Soc., \textbf{146} (2018), no. 6, 2585-2600.

\bibitem{NN} Qu$\acute{\hat{o}}$c Anh Ng\^{o} and V. H. Nguyen, {\it Sharp reversed Hardy-Littlewood-Sobolev inequality on $\mathbb{R}^{n}$}, Israel J. Math., \textbf{220} (2017), no. 1, 189-223.

\bibitem{Pa} P. Padilla, {\it On some nonlinear elliptic equations}, Thesis, Courant Institute, 1994.

\bibitem{P} S. Paneitz, {\it A quartic conformally covariant differential operator for arbitrary pseudo-Riemannian manifolds}, preprint, available at http://wwww.emis.de/journals, 1983.

\bibitem{Serrin} J. Serrin, {\it A symmetry problem in potential theory}, Arch. Rational Mech. Anal., \textbf{43} (1971), 304-318.

\bibitem{S} L. Silvestre, {\it Regularity of the obstacle problem for a fractional power of the Laplace operator}, Comm. Pure Appl. Math., \textbf{60} (2007), 67-112.

\bibitem{WX} J. Wei and X. Xu, {\it Classification of solutions of higher order conformally invariant equations}, Math. Ann., \textbf{313} (1999), no. 2, 207-228.

\bibitem{Xu} X. Xu, {\it Exact solutions of nonlinear conformally invariant integral equations in $\mathbb{R}^{3}$}, Adv. Math., \textbf{194} (2005), 485-503.

\bibitem{Yu} X. Yu, {\it Classification of solutions for some elliptic system}, Calc. Var. \& PDEs, \textbf{61} (2022), no. 4, Paper No. 151, 37 pp.

\bibitem{Z} N. Zhu, {\it Classification of solutions of a conformally invariant third order equation in $\mathbb{R}^{3}$}, Commun. Partial Differential Equations, \textbf{29} (2004), 1755-1782.



\end{thebibliography}
\end{document}